\newtheorem{thm}{Theorem}[section]
\newtheorem{lem}[thm]{Lemma}
\newtheorem{defn}[thm]{Definition}
\newtheorem{prop}[thm]{Proposition}
\newtheorem{cor}[thm]{Corollary}
\newtheorem{rem}[thm]{Remark}
\newtheorem{otherth}{\bf Theorem}
\newtheorem{otherl}{\bf Lemma}
\newcommand{\C}{{\mathbb C}}
\newcommand{\D}{{\mathbb D}}
\newcommand{\R}{{\mathbb R}}
\newcommand{\N}{{\mathbb N}}
\newcommand{\om}{\omega}
\numberwithin{equation}{section}
\title[Volterra type integral operators]{Generalized Volterra type integral operators \\on large Bergman spaces}
\author{H. Arroussi}
\address{H. Arroussi\\Department of Mathematics, University of Reading, England}
\email{arroussihicham@yahoo.fr}
\author{H. Gissy}
\address{H. Gissy\\ Department of Mathematics, University of Reading, England\newline \indent Department of Mathematics, Jazan University, Saudi Arabia}
\email{h.m.h.gissy@pgr.reading.ac.uk, hgissy@jazanu.edu.sa}
\author{J. A. Virtanen}
\address{J. Virtanen\\Department of Mathematics, University of Reading, England\newline \indent Department of Mathematics, University of Helsinki, Finland}
\email{j.a.virtanen@reading.ac.uk, jani.virtanen@helsinki.ac.uk}
\keywords{Volterra type integral operators, large Bergman spaces} 
\subjclass[2020]{47B38, 30H20}
\begin{document}
\begin{abstract}
Let $\phi$ be an analytic self-map of the open unit disk $\D$ and $g$ analytic in $\D$. We characterize boundedness and compactness of generalized Volterra type integral operators
$$
	GI_{(\phi,g)}f(z)= \int_{0}^{z}f'(\phi(\xi))\,g(\xi)\, d\xi
$$
and
$$
	GV_{ (\phi, g)}f(z)= \int_{0}^{z} f(\phi(\xi))\,g(\xi)\, d\xi,
$$
acting between large Bergman spaces $A^p_\omega$ and $A^q_\omega$ for $0<p,q\le \infty$. To prove our characterizations, which involve Berezin type integral transforms, we use the Littlewood-Paley formula of Constantin and Pel\'aez and establish corresponding embedding theorems, which are also of independent interest. When $\phi(z) = z$, our results for $GV_{(\phi,g)}$ complement the descriptions of Pau and Pel\'aez.
\end{abstract}

\maketitle

\section{Introduction and main results}
For $0< p < \infty$ and a positive function $\omega\in L^1(\mathbb{D}, dA)$, the weighted Bergman spaces $A^p_{\omega}$ and $A^\infty_\omega$ consist of all analytic functions defined on the unit disk $\D$ for which
\[
	 \|f \|^p_{A^p_{\omega}} = \int_{\D}|f(z)|^p  \,\omega(z)^{p/2}\, dA(z) < \infty
\]
and 
$$
	\|f\|_{A^\infty_\omega} = \sup_{z\in \D} |f(z)|\, \om(z)^{1/2} < \infty,
$$
respectively, where $dA$ is the normalized area measure on $\D$.

In this paper, we study generalized Volterra type integral operators between weighted Bergman spaces for a certain class $\mathcal{W}$ of radial rapidly decreasing weights. The class $\mathcal{W},$  considered previously  in \cite{GP1} and \cite{PP1}, consists of the radial decreasing weights  of the form $\omega(z)=e^{-2\varphi(z)}$,
where $\varphi\in C^2(\D)$ is a radial function such that
$\left(\Delta\varphi(z)\right)^{-1/2}\asymp \tau(z)$ for some  radial positive function $\tau(z)$ that decreases to $0$ as $|z|\rightarrow 1^{-}$ and satisfies  $\lim_{r\to 1^-}\tau'(r)=0.$ Here $\Delta$ denotes the standard Laplace operator.  Furthermore, we assume that  there either exists  a constant $C>0$  such that
$\tau(r)(1-r)^{-C}$ increases for $r$ close to $1$ or
$$
	 \lim_{r\to 1^-}\tau'(r)\log\frac{1}{\tau(r)}=0.
$$
See Section 7 of \cite{PP1} for examples of weights in $\mathcal{W}$, such as the  following exponential type weight
$$
	\omega_{\gamma, \alpha}(z) 
	= (1-|z|)^\gamma\exp\left(\frac{-b}{(1-|z|)^\alpha}\right),
	\quad  \gamma\ge 0, \alpha>0, b>0.
$$
For the weights $\omega$ in $\mathcal{W}$, the point evaluations $L_ z: f\longmapsto f(z)$ are bounded linear functionals on $A^2_{\omega}$ for each $z\in \D$, and so $A^2_{\omega}$ is a reproducing kernel Hilbert space; that is, for each $z\in \D$, there are functions $K_ z\in A^2_{\omega}$ with $\|L_ z\|=\|K_ z\|_{A^2_{\omega}}$ such that $L_ z f=f(z)=\langle f, K_ z \rangle _{\omega}$, where
$$
	\langle f,g \rangle_{\omega}=\int_{\D} f(z)\,\overline{g(z)} \,\omega(z)\,dA(z).
$$
The function $K_z$ is called the reproducing kernel for the Bergman space $A^2_{\omega}$ and has the property that $K_ z(\xi)=\overline{K_{\xi}(z)}$. The Bergman spaces with exponential type weights have attracted considerable attention in recent years because of novel techniques different from those used for standard Bergman spaces; see, e.g., \cite{BDK, xiaof} and the references therein. Various estimates for the reproducing kernel play an important role in our work and we discuss them further in Section~\ref{sec2:1}.

Let $\phi$ and $g$ be  analytic self-maps of $\D$.   The generalized Volterra type integral operators $GI_{(\phi, g)}$ and $GV_{(\phi, g)}$ induced by the pair of symbols  $(\phi, g)$ are defined by
\begin{equation}\label{e:GI and GV}
	GI_{(\phi,g)}f(z) = \int_{0}^{z}f'(\phi(\xi))\,g(\xi) d\xi\quad {\rm and}\quad
	GV_{ (\phi, g)}f(z) = \int_{0}^{z} f(\phi(\xi))\,g(\xi) d\xi, 
\end{equation}
where $ f\in H(\D)$ and  $z\in \D$. When $g=\phi',$ the operator $GI_{(\phi,\phi')}$ is the composition operator $C_{\phi}$ up to a certain constant---these operators acting between different large Bergman spaces were recently studied in~\cite{Hi-w}. As another special case, when $\phi(\xi)=\xi$, we obtain the Volterra integral operator
\begin{equation}\label{e:V}
	V_g f(z):=GV_{ (\phi, g')}f(z)= \int_{0}^{z} f(\xi)\,g'(\xi) d\xi,
\end{equation}
and its companion integral operator
\begin{equation}\label{e:J}
	J_gf(z):=GI_{(\phi,g)}f(z)= \int_{0}^{z}f'(\xi)\,g(\xi) d\xi.
\end{equation}
Previously Pau and Pel\'{a}ez characterized boundedness and compactness of $V_g : A^p_{\omega} \to A^q_{\omega}$ in~\cite{PP1} when $0<p,q<\infty$. Via~\eqref{e:V} and~\eqref{e:J}, our characterizations extend the previous results to the full range $0<p,q\le\infty$ and to all weights in $\mathcal{W}$, and also deal with the companion operator $J_g$ for the first time. The generalized Volterra type integral operators $GI_{(\phi, g)}$ and $GV_{(\phi, g)}$ were previously studied by Mengestie~\cite{M2016-1, M2016, MU2019} in standard Fock spaces and by Li~\cite{Li} in standard Bergman spaces and Bloch type spaces.

\subsection{Main results}
In this paper we study boundedness and and compactness of the generalized Volterra type integral operators $GI_{(\phi, g)}$ and $GV_{(\phi, v)}$. Our results on Schatten class properties, compact differences, and the essential norm of these operators will be published elsewhere.

For $0<p,q < \infty$, our characterizations for boundedness and compactness are given in terms of the integral transform
$$
	GB^\phi_{n,p,q}(g)(z) 
	= \int_{\D} |k_{p,z}(\phi(\xi))|^q\,\frac{(1+\varphi'(\phi(\xi))^{nq}}{(1+\varphi'(\xi))^{q}}\,|g(\xi)|^q\,\omega(\xi)^{q/2}\, dA(\xi),\quad z\in \D,
$$
where $n = 0,1$ and $k_{p,z}$ is the normalized reproducing kernel of $A^p_{\om}$.  

\begin{thm} \label{thm1}
Let $\omega \in \mathcal{W}$, $\phi : \D\to \D$ be analytic, and $g\in H(\D)$.

{\rm (A)} For  $0<p\le q < \infty,$ the operator 
$GI_{(\phi,g)}: A^p_\omega\rightarrow A^q_\omega$ is bounded if and only if 
$$
	GB^\phi_{1,p,q}(g)\in L^{\infty}(\D,dA),
$$
and compact if and only if $\lim_{|z|\to1^-}GB^\phi_{1,p,q}(g)(z)=0$.

{\rm (B)} For $0<p<\infty$, $GI_{(\phi,g)}: A^p_\omega\rightarrow A^\infty_\omega$ is bounded if and only if
\begin{equation}\label{eq-G2}
	MI_{g,\phi,\omega}(z):=|g(z)|\, \,\frac{(1+\varphi'(\phi(z))}{(1+\varphi'(z))}\frac{\omega(z)^{1/2}}{\omega(\phi(z))^{1/2}}\Delta\varphi(\phi(z))^{1/p}\in L^{\infty}(\D,dA),
\end{equation}
and compact if and only if $\lim_{|\phi(z)|\to 1^-}MI_{g,\phi,\omega}(z)=0$.

{\rm (C)} The operator $GI_{(\phi,g)}: A^\infty_\omega\rightarrow A^\infty_\omega$ is bounded if and only if
\begin{equation}\label{eq-G3-NI}
	NI_{g,\phi,\omega}(z):=|g(z)|\, \,\frac{(1+\varphi'(\phi(z))}{(1+\varphi'(z))}\frac{\omega(z)^{1/2}}{\omega(\phi(z))^{1/2}}\in L^{\infty}(\D,dA),
\end{equation}
and compact if and only if $\lim_{|\phi(z)|\to 1^-} NI_{g,\phi,\omega}(z)=0$.

{\rm (D)} For  $0<q<p \le \infty$, both boundedness and compactness of $GI_{(\phi,g)}: A^p_\omega\rightarrow A^q_\omega$ are equivalent to the condition 
$$
	GB^\phi_{1,p,q}(g)\in L^{s}(\D,d\lambda),
$$
where $\lambda(z)=dA(z)/\tau(z)^2$ and $s=p/(p-q)$ if $p<\infty$ and $s=1$ if $p=\infty$.
\end{thm}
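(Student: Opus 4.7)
The plan is to reduce each mapping property to a derivative embedding problem via the Constantin--Pel\'aez Littlewood--Paley formula, and then resolve the embedding using reproducing-kernel test functions. Since $GI_{(\phi,g)}f(0)=0$ and $(GI_{(\phi,g)}f)'(z)=f'(\phi(z))\,g(z)$, the Littlewood--Paley formula gives, for $0<q<\infty$,
\begin{equation*}
\|GI_{(\phi,g)}f\|_{A^q_\omega}^q \;\asymp\; \int_{\D} |f'(\phi(\xi))|^q\,|g(\xi)|^q\,\tau(\xi)^q\,\omega(\xi)^{q/2}\,dA(\xi),
\end{equation*}
so boundedness from $A^p_\omega$ to $A^q_\omega$ reduces to an embedding of $f\mapsto f'$ from $A^p_\omega$ into $L^q$ against the pushforward of $d\mu_g(\xi)=|g(\xi)|^q\tau(\xi)^q\omega(\xi)^{q/2}\,dA(\xi)$ under $\phi$.

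For parts (A) and (D), I would invoke the derivative embedding theorem established independently in the paper: $\int_{\D}|f'(w)|^q\,d\nu(w)\lesssim \|f\|_{A^p_\omega}^q$ precisely when an appropriate Berezin transform of $\nu$ lies in $L^\infty$ (for $p\le q$) or in $L^{p/(p-q)}(d\lambda)$ (for $0<q<p$). Necessity is obtained by testing with the normalized kernels $k_{p,z}$ and using the pointwise derivative estimate $|k_{p,z}'(w)|\asymp(1+\varphi'(w))\,|k_{p,z}(w)|$ on a pseudohyperbolic disc of radius $\asymp\tau(w)$ about $z$---this is what contributes the factor $(1+\varphi'(\phi(\xi)))^q$ in $GB^\phi_{1,p,q}$. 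Sufficiency for $p\le q$ uses a $\tau$-pseudohyperbolic covering and subharmonic averaging of $|f'|$; the $L^{p/(p-q)}(d\lambda)$ description in the regime $q<p$ is obtained via Luecking-type duality adapted to the class $\mathcal{W}$. Pulling the resulting condition back through $\phi$ and using $\tau\asymp(1+\varphi')^{-1}$ yields exactly $GB^\phi_{1,p,q}(g)\in L^\infty$ or $L^s(d\lambda)$. Compactness in (A) follows by the standard weakly-null test-sequence argument, and in (D) coincides with boundedness as in the classical Bergman case.

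Parts (B) and (C) require a pointwise approach because the target is $A^\infty_\omega$. I would combine the Bloch-type description
\begin{equation*}
\|h\|_{A^\infty_\omega}\;\asymp\; |h(0)|+\sup_{z\in\D}|h'(z)|\,\tau(z)\,\omega(z)^{1/2}
\end{equation*}
(valid for $\omega\in\mathcal{W}$) applied to $h=GI_{(\phi,g)}f$ with the sharp pointwise bounds $|f'(w)|\lesssim \|f\|_{A^p_\omega}\tau(w)^{-1-2/p}\omega(w)^{-1/2}$ for $p<\infty$ and $|f'(w)|\tau(w)\omega(w)^{1/2}\lesssim \|f\|_{A^\infty_\omega}$ for $p=\infty$. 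Substituting these estimates and using $\Delta\varphi\asymp\tau^{-2}$ immediately produces the sufficient conditions $MI_{g,\phi,\omega}\in L^\infty$ and $NI_{g,\phi,\omega}\in L^\infty$. Necessity is proved by testing with normalized kernels $k_{p,\phi(z_0)}$ chosen so that $|f'(\phi(z_0))|$ realizes its pointwise maximum, then evaluating at $z_0$. The localization $|\phi(z)|\to 1^-$ in the compactness criteria is natural, because $|\phi(z)|$ bounded away from $\partial\D$ reduces the problem to a standard normal-family argument.

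The main technical hurdle is the derivative embedding theorem underlying parts (A) and (D). Compared with the function-value Carleson embedding of Pau and Pel\'aez in~\cite{PP1}, here one must control $f'$ on the support of $\phi_*\mu_g$; obtaining the sharp factor $(1+\varphi'(\phi(\xi)))^q$ requires tracking the kernel derivative estimate and the Littlewood--Paley weight $\tau^q$ in tandem. The $L^s(d\lambda)$ regime $0<q<p$ is the most delicate, since Luecking symmetrization has to be carried out in the non-doubling setting of $\mathcal{W}$, where the reproducing kernels have no closed form and off-diagonal decay is available only through the estimates recalled earlier in the paper.
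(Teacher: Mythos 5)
Your overall architecture matches the paper's: reduce $GI_{(\phi,g)}$ to a derivative embedding via the Littlewood--Paley formula, get necessity from localized test functions, sufficiency from a $(\delta,\tau)$-covering with subharmonic averaging, and handle $0<q<p$ by Khinchine/Luecking duality. However, there is a genuine quantitative error at the very first step that propagates through all four parts. You write the Littlewood--Paley formula with the weight $\tau(\xi)^q$ and later invoke ``$\tau\asymp(1+\varphi')^{-1}$'' to convert your conditions into the stated ones. This identification is false for general $\omega\in\mathcal{W}$. The correct formula, \eqref{littleW}--\eqref{littleW1}, carries the weight $(1+\varphi'(\xi))^{-q}$, and the two quantities have different orders of decay: for the model weight $\omega_{0,\alpha}$ one has $\varphi'(r)\asymp(1-r)^{-\alpha-1}$ and $\Delta\varphi(r)\asymp(1-r)^{-\alpha-2}$, hence $\tau(r)\asymp(1-r)^{1+\alpha/2}$ while $(1+\varphi'(r))^{-1}\asymp(1-r)^{1+\alpha}$; these agree only when $\alpha=0$. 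Consequently your reduction produces characterizing quantities containing the ratio $\tau(z)/\tau(\phi(z))$ where the theorem (correctly) has $(1+\varphi'(\phi(z)))/(1+\varphi'(z))$; in particular your sufficient conditions in (B) and (C), derived from the ``Bloch-type'' seminorm $\sup_z|h'(z)|\,\tau(z)\,\omega(z)^{1/2}$, would not be equivalent to $MI_{g,\phi,\omega}\in L^\infty$ and $NI_{g,\phi,\omega}\in L^\infty$ as stated. The factor $(1+\varphi'(\phi(\xi)))^q$ in $GB^\phi_{1,p,q}$ does come, as you say, from the derivative of the test function (Lemma \ref{BL3}: $|F'_{a,n}(z)|\,\omega(z)^{1/2}\asymp 1+\varphi'(z)$ on $D_\delta(a)$), but it must be paired with the denominator $(1+\varphi'(\xi))^q$ coming from \eqref{littleW}, not with $\tau(\xi)^q$.

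A secondary, more minor divergence: the paper does not prove a standalone ``derivative embedding theorem'' for $A^p_\omega$; instead it introduces the auxiliary spaces $S^p_\omega$ in \eqref{e:S^p}--\eqref{e:S^infty} (which are exactly the image of $A^p_\omega$ under $f\mapsto f'$ with the Littlewood--Paley norm) and proves embedding theorems $S^p_\omega\hookrightarrow L^q(d\mu)$ in Lemmas \ref{lem1}--\ref{lem3}, then passes to the Berezin-transform form $GB^\phi_{1,p,q}$ via the Carleson-measure equivalences of Theorems \ref{thm:CMPQ}--\ref{thm:CMOP2} and the kernel comparison of Lemma \ref{lem:RK-PE1}. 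That organizational choice is equivalent to yours in spirit, so it is not a gap; the fix your argument needs is solely to replace $\tau$ by $(1+\varphi')^{-1}$ wherever it plays the role of the Littlewood--Paley weight, and to rerun the bookkeeping with that correction.
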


\begin{thm}\label{thmV1}
Let $\om \in \mathcal{W},$  $\phi:\D\to\D$ be analytic, and $g\in H(\D)$.

{\rm (A)} For $ 0<p\le q< \infty$, $GV_{ (\phi, g)}: A^p_\omega\rightarrow A^q_\omega$ is bounded if and only if $GB^\phi_{0,p,q}(g)\in L^{\infty}(\D,dA)$, and compact if and only if $\lim_{|z|\to1^-}GB^\phi_{0,p,q}(g)=0$.

{\rm (B)} For $ 0<p<\infty,$ the  operator $GV_{ (\phi, g)}: A^p_\omega\rightarrow A^\infty_\omega$ is bounded if and only if 
\begin{equation}\label{eq-V2}
	MV_{g,\phi,\omega}(z):=\frac{|g(z)|}{(1+\varphi'(z))}\frac{\omega(z)^{1/2}}{\omega(\phi(z))^{1/2}}\Delta\varphi(\phi(z))^{1/p}\in L^{\infty}(\D,dA),
\end{equation}
and compact if and only if $\lim_{|\phi(z)|\to 1} MV_{g,\phi,\omega}(z) =0$.

{\rm (C)} The operator $GV_{ (\phi, g)}: A^\infty_\omega\rightarrow A^\infty_\omega$ is bounded if and only if 
\begin{equation}\label{eq-G3-NV}
	NV_{g,\phi,\omega}(z):=\frac{|g(z)|}{(1+\varphi'(z))}\frac{\omega(z)^{1/2}}{\omega(\phi(z))^{1/2}}\in L^{\infty}(\D,dA),
\end{equation}
and compact if and only if $\lim_{|\phi(z)|\to 1} NV_{g,\phi,\omega}(z) = 0$.

{\rm (D)} For $ 0<q< p \le \infty$, both boundedness and compactness of $GV_{ (\phi, g)}: A^p_\omega\rightarrow A^q_\omega$ are equivalent to the condition
$$
	GB^\phi_{0,p,q}(g)\in L^r(\D,d\lambda),
$$
where $r = p/(p-q)$ when $p<\infty$ and $r=1$ when $p=\infty$.
\end{thm}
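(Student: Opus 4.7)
The plan rests on two reductions. First, the Littlewood--Paley identity of Constantin--Pel\'aez asserts that for $\om\in\mathcal{W}$ and $F\in H(\D)$ with $F(0)=0$,
$$\|F\|^q_{A^q_\om}\asymp\int_\D|F'(z)|^q\,(1+\varphi'(z))^{-q}\,\om(z)^{q/2}\,dA(z),\qquad 0<q<\infty,$$
together with the supremum analogue $\|F\|_{A^\infty_\om}\asymp\sup_{z\in\D}|F'(z)|\,\om(z)^{1/2}/(1+\varphi'(z))$ for $q=\infty$. Applied to $F=GV_{(\phi,g)}f$, whose derivative equals $f(\phi(z))g(z)$ and which vanishes at the origin, this identifies $\|GV_{(\phi,g)}f\|_{A^q_\om}$ with a weighted $L^q$ quantity in $f\circ\phi$. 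Writing $d\mu_g(\xi)=|g(\xi)|^q(1+\varphi'(\xi))^{-q}\om(\xi)^{q/2}\,dA(\xi)$ (for $q<\infty$) and letting $\nu_g=\mu_g\circ\phi^{-1}$ be the push-forward, the boundedness of $GV_{(\phi,g)}:A^p_\om\to A^q_\om$ becomes the Carleson-type embedding $\|f\|_{L^q(\nu_g)}\lesssim\|f\|_{A^p_\om}$, with the analogous compactness statement.

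Part (A) then follows from the $(p,q)$-Carleson embedding theorem for $A^p_\om$ developed earlier in the paper, which characterizes $A^p_\om\hookrightarrow L^q(\nu)$ in the range $0<p\le q<\infty$ by the boundedness (resp.\ vanishing at the boundary for compactness) of the Berezin-type transform $z\mapsto\int_\D|k_{p,z}(w)|^q\,d\nu(w)$. By the change of variable induced by the push-forward, this transform evaluated at $\nu_g$ equals $GB^\phi_{0,p,q}(g)(z)$, giving the required conditions.

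For parts (B) and (C), I would combine the sup-version of the Littlewood--Paley identity with the sharp pointwise estimate $|f(w)|\om(w)^{1/2}\lesssim\|f\|_{A^p_\om}\De\varphi(w)^{1/p}$ for $f\in A^p_\om$ (and the trivial $|f(w)|\om(w)^{1/2}\le\|f\|_{A^\infty_\om}$ when $p=\infty$), applied at $w=\phi(z)$, thereby establishing sufficiency of (\ref{eq-V2}) and (\ref{eq-G3-NV}) directly. Necessity---and the vanishing conditions for compactness---would follow from testing against the normalized reproducing kernels $k_{p,z}$, which are uniformly bounded in $A^p_\om$, attain the expected extremal size at $w=z$, and converge to zero on compact subsets of $\D$ as $|z|\to 1^-$.

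Part (D) reduces, via the same first step, to characterizing the embedding $A^p_\om\hookrightarrow L^q(\nu_g)$ when $0<q<p\le\infty$. The corresponding embedding theorem proved earlier states that such an embedding holds if and only if the Berezin-type transform of $\nu$ belongs to $L^r(\D,d\lambda)$ with $d\lambda(z)=dA(z)/\tau(z)^2$ and $r=p/(p-q)$ (or $r=1$ when $p=\infty$), and in this range boundedness and compactness of the embedding are equivalent. Translating via the push-forward as in part (A) yields $GB^\phi_{0,p,q}(g)\in L^r(\D,d\lambda)$. The main obstacle I foresee is the bookkeeping around $\phi$ being possibly non-injective: the push-forward identification of the Berezin transform must be justified and the equivalence between testing against push-forward measures and testing against the kernels $k_{p,z}$ has to be made rigorous. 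A secondary subtlety arises in (D), where showing that the $L^r(d\lambda)$ condition already forces compactness requires combining the integrability with the standard criterion on bounded sequences tending to zero uniformly on compacta.
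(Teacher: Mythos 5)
Your overall strategy coincides with the paper's: apply the Littlewood--Paley formulas \eqref{littleW}--\eqref{littleW1} to $GV_{(\phi,g)}f$ (whose derivative is $f(\phi(z))g(z)$), convert boundedness into a Carleson-type embedding for the pull-back measure, and then invoke the Carleson measure characterizations of Section~\ref{carleson} for parts (A) and (D), with direct pointwise estimates plus test functions for (B) and (C). The push-forward bookkeeping you worry about is harmless: $\int_\D h\,d(\mu_g\circ\phi^{-1})=\int_\D (h\circ\phi)\,d\mu_g$ holds for any nonnegative measurable $h$ with no injectivity assumption, and this is exactly how the paper identifies the Berezin transform of $\nu_{\phi,\omega,g}$ with $GB^\phi_{0,p,q}(g)$ (after using Lemma~\ref{lem:RK-PE1}(a) to pass from $k_{q,z}$ to $k_{p,z}$, a normalization step you elide but which is routine). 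Part (D) for $p<\infty$ and $p=\infty$ is handled exactly as you describe, via Theorems~\ref{thm:CMOP}, \ref{thm:CMOP2} and \ref{thm:VCMQP}.

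The one genuine gap is in the necessity of the compactness conditions in (B) and (C). You propose testing against the normalized reproducing kernels $k_{p,z}$ and assert that they ``converge to zero on compact subsets of $\D$ as $|z|\to1^-$.'' For the boundedness necessity this choice is fine, since the diagonal estimate \eqref{eqn:RK-Diag1} gives $|k_{p,\phi(\xi)}(\phi(\xi))|\,\om(\phi(\xi))^{1/2}\asymp\tau(\phi(\xi))^{-2/p}\asymp\De\varphi(\phi(\xi))^{1/p}$, which produces $MV_{g,\phi,\omega}$ and $NV_{g,\phi,\omega}$ directly. But the vanishing on compacta of $k_{p,z}$ does not follow from anything established in the paper: the crude bound $|K_z(\zeta)|\le\|K_z\|_{A^2_\om}\|K_\zeta\|_{A^2_\om}$ combined with \eqref{Eq-NE} gives $|k_{p,z}(\zeta)|\lesssim\tau(z)^{1-2/p}$ on compacta, which blows up for $p<2$; one needs genuine off-diagonal decay of the kernel, a delicate matter for exponential weights. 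The paper avoids this entirely by testing with the constructed functions $F_{a,n}$ of Lemma~\ref{Borichevlemma} (normalized as $F_{\phi(\xi),n}/\tau(\phi(\xi))^{2/p}$), whose explicit decay \eqref{BL2} makes the convergence to zero on compacta immediate while still giving the same extremal size on $D_\delta(\phi(\xi))$ via \eqref{BL1}. You should either substitute these test functions or supply a kernel decay estimate from the literature; as written, the compactness-necessity step in (B) and (C) does not close.
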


We also prove the following simpler necessary conditions for boundedness and compactness.

\begin{prop}\label{prop1}
Let $\omega \in \mathcal{W}$, $\phi : \D\to \D$ be analytic, and $g\in H(\D)$. 

{\rm (A)} If $0<p,q<\infty$ and $GI_{(\phi,g)}: A^p_\omega\rightarrow A^q_\omega$ is bounded, then 
\begin{equation}\label{eq-nc1-GI}
	z\mapsto |g(z)|\frac{\tau(z)^{2/q}}{\tau(\phi(z))^{2/p}}\frac{(1+\varphi'(\phi(z))}{(1+\varphi'(z))}\frac{\omega(z)^{1/2}}{\omega(\phi(z))^{1/2}}
	\in L^{\infty}(\D,dA);
\end{equation}
and if $GI_{(\phi,q)}$ is compact, then the function in \eqref{eq-nc1-GI} vanishes as $|z|\to 1$.

{\rm (B)}  If $0<p\le q< \infty$ and $GV_{(\phi,g)} : A^p_\omega\rightarrow A^q_\omega$ is bounded, then 
\begin{equation}\label{eq-nc1-GV}
	z\mapsto \frac{\tau(z)^{2	/q}}{\tau(\phi(z))^{2/p}}\frac{|g(z)|}{(1+\varphi'(z))}
	\frac{\omega(z)^{1/2}}{\omega(\phi(z))^{1/2}}\in L^{\infty}(\D,dA);
\end{equation}
and if $VG_{(\phi, g)}$ is compact, then the function in~\eqref{eq-nc1-GV} vanishes as $|\phi(z)|\to 1$.
\end{prop}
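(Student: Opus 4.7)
The plan is to extract both necessary conditions by testing the operators on normalized reproducing kernels concentrated at the point $\phi(a)$, and then applying the sharp pointwise estimates for functions and their derivatives in $A^q_\om$ that come from the kernel asymptotics for the weight class $\mathcal{W}$ developed in Section~\ref{sec2:1}.

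For part~(A), assume $GI_{(\phi,g)}:A^p_\om\to A^q_\om$ is bounded. Fix $a\in\D$, set $b=\phi(a)$, and use as test function the normalized reproducing kernel $k_{p,b}$ of $A^p_\om$ at $b$, for which
$$
\|k_{p,b}\|_{A^p_\om}\asymp 1 \quad\text{and}\quad |k_{p,b}'(b)|\asymp (1+\varphi'(b))\,\tau(b)^{-2/p}\,\om(b)^{-1/2}.
$$
Because $\bigl(GI_{(\phi,g)}f\bigr)'(z)=f'(\phi(z))\,g(z)$, the standard pointwise derivative estimate
$$
|h'(z)|\lesssim \|h\|_{A^q_\om}\,(1+\varphi'(z))\,\tau(z)^{-2/q}\,\om(z)^{-1/2}\qquad(h\in A^q_\om)
$$
applied at $z=a$ to $h=GI_{(\phi,g)}k_{p,b}$ gives
$$
|k_{p,b}'(b)|\,|g(a)|\lesssim \|GI_{(\phi,g)}\|\,(1+\varphi'(a))\,\tau(a)^{-2/q}\,\om(a)^{-1/2}.
$$
Substituting the lower bound for $|k_{p,b}'(b)|$ and rearranging yields \eqref{eq-nc1-GI}. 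For the compactness statement, along any sequence $a_n$ with $|\phi(a_n)|\to 1$ the kernels $k_{p,\phi(a_n)}$ converge weakly to zero in $A^p_\om$, so compactness forces $\|GI_{(\phi,g)}k_{p,\phi(a_n)}\|_{A^q_\om}\to 0$ and the same pointwise inequality yields vanishing of \eqref{eq-nc1-GI} along $a_n$. For sequences $|a_n|\to 1$ along which $\phi(a_n)$ stays in a compact subset of $\D$, the factors indexed by $\phi(a_n)$ are comparable to constants, and the vanishing reduces to a growth bound on $g(a_n)$ deduced from the rapid decay of $\om(a_n)^{1/2}$ combined with a weakly null test family that does not degenerate at the cluster point of $\phi(a_n)$.

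Part~(B) is parallel and slightly simpler: now $\bigl(GV_{(\phi,g)}f\bigr)'(z)=f(\phi(z))\,g(z)$, so the derivative estimate for the kernel is replaced by the sharp value estimate $|k_{p,b}(b)|\asymp \tau(b)^{-2/p}\om(b)^{-1/2}$. This eliminates the factor $(1+\varphi'(\phi(z)))$ from the resulting inequality and produces exactly \eqref{eq-nc1-GV}. For compactness the natural regime is $|\phi(a_n)|\to 1$, in which the kernels $k_{p,\phi(a_n)}$ are weakly null and the same test-function argument applies directly, giving the asserted vanishing.

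The main obstacle is the sharp two-sided estimate $|k_{p,b}'(b)|\asymp (1+\varphi'(b))\tau(b)^{-2/p}\om(b)^{-1/2}$ for the derivative of the normalized reproducing kernel of a space $A^p_\om$ with $\om\in\mathcal{W}$, together with the corresponding upper estimate for $|h'(z)|$ on a general element $h\in A^q_\om$; both belong to the kernel asymptotics of Section~\ref{sec2:1} and rely on the specific structure of the class $\mathcal{W}$. Once these inputs are in hand, the proposition reduces to the short pointwise manipulation described above.
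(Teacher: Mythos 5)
Your overall architecture --- a test function peaking at $b=\phi(a)$, followed by a pointwise estimate at $a$ for the derivative of the image function --- is exactly the paper's strategy, and your part (B) goes through with ingredients the paper actually supplies: the diagonal value estimate $|k_{p,b}(b)|\,\om(b)^{1/2}\asymp\tau(b)^{-2/p}$ is Lemma \ref{lem:RK-PE1}(c), and the upper bound $|h'(a)|\lesssim\|h\|_{A^q_\om}(1+\varphi'(a))\tau(a)^{-2/q}\om(a)^{-1/2}$ follows from \eqref{littleW} together with \eqref{Eq-gamma} applied to $h'$ with the weight $\om^{q/2}(1+\varphi')^{-q}$ (this is precisely how the paper lower-bounds its integrals, just phrased as a pointwise inequality).

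The gap is in part (A): your argument hinges on the two-sided estimate $|k_{p,b}'(b)|\asymp(1+\varphi'(b))\,\tau(b)^{-2/p}\,\om(b)^{-1/2}$, and in particular on its \emph{lower} bound. This is not in Section~\ref{sec2:1}: Theorem A(b) and Lemma \ref{lem:RK-PE1} give only value estimates for the kernel near the diagonal, and a lower bound for the derivative of the kernel on the diagonal is not a formal consequence of them --- indeed for a radial weight $K_0$ is constant, so $K_0'\equiv 0$ and the claimed asymptotic fails at interior points; whatever proof one gives near the boundary must use the specific structure of $\mathcal{W}$ and is genuinely delicate. The paper sidesteps this entirely by testing with the Borichev--Dhuez--Kellay functions $F_{\phi(a),n}$ of Lemma \ref{Borichevlemma} instead of the kernel: Lemma \ref{BL3} provides exactly the needed two-sided derivative bound $|F_{\phi(a),n}'(\zeta)|\,\om(\zeta)^{1/2}\asymp 1+\varphi'(\zeta)$ on $D_\delta(\phi(a))$, and $\|F_{\phi(a),n}\|_{A^p_\om}\asymp\tau(\phi(a))^{2/p}$ plays the role of your normalization. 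Replacing $k_{p,\phi(a)}$ by $F_{\phi(a),n}/\tau(\phi(a))^{2/p}$ repairs your part (A) with no other changes. A secondary weakness: in the compactness statement of (A) the regime $|a_n|\to1$ with $\phi(a_n)$ staying in a compact set is only gestured at ("a weakly null test family that does not degenerate"); this needs an actual test family and an estimate showing the rapid decay of $\om(a_n)^{1/2}$ kills the remaining factors, since the quantity in \eqref{eq-nc1-GI} must vanish as $|z|\to1$ and not merely as $|\phi(z)|\to1$.
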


As a consequence of the two main theorems, when $\phi(z) = z$, we obtain characterizations for boundedness and compactness of $V_g$ and its companion operator $J_g$. The results for $J_g$ are new while the descriptions for $V_g$ had been partially obtained before as explained in the following remark.

\begin{rem} Notice that {\rm (C)} and {\rm (D)} of Corollary~\ref{cor1} are analogous to the descriptions given in Theorem~3 of Constantin and Pel\'aez~\cite{CoPe} when $V_g$ is acting between weighted Fock spaces, but the two cases require different methods due to fundamental differences between the two types of spaces. Further, Corollary \ref{cor2} implies Theorem~2 of Pau and Pel\'aez~\cite{PP1}, that is, we show that the their conditions are equivalent of those in Theorem~\ref{thmV1} when $\phi(z) = z$.
\end{rem}

\begin{cor}\label{cor1}
Let $\omega \in \mathcal{W}$ and $g\in H(\D)$.

{\rm (A)} For $0<p<q\le \infty$, $J_g : A^p_\omega\to A^q_\omega$ is bounded if and only if $g=0$. 

{\rm (B)} For $p>q$, $J_g : A^p_\omega\to A^q_\omega$ is compact if and only if $g\in L^s(\D, dA)$, where $s=pq/(p-q)$.

{\rm (C)} For $0<p\le q \le \infty$, $V_g : A^p_\omega \to A^q_\omega$ is bounded if and only if
\begin{equation}\label{eq-V7}
	z\mapsto \frac{|g'(z)|}{(1+\varphi'(z))}\Delta\varphi(z)^{\frac{1}{p}-\frac{1}{q}}\in L^{\infty}(\D,dA),
\end{equation}
and $V_g : A^p_\omega \to A^q_\omega$ is compact if the function in \eqref{eq-V7} vanishes as $|z|\to 1$.

{\rm (D)} For $0<q<p<\infty$, $V_g : A^p_\omega \to A^q_\omega$ is bounded if and only if
\begin{equation}\label{eq-V9}
	\frac{|g'(z)|}{(1+\varphi'(z))}\in L^{\frac{pq}{p-q}}(\D,dA).
\end{equation}
\end{cor}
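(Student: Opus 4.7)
The plan is to specialize Theorems~\ref{thm1} and~\ref{thmV1} to $\phi(z)=z$ and simplify the resulting integral transforms using the concentration estimates for the normalized reproducing kernel. When $\phi=\mathrm{id}$, the ratio $(1+\varphi'(\phi(\xi)))^{nq}/(1+\varphi'(\xi))^q$ in $GB^\phi_{n,p,q}$ equals $1$ for $n=1$ and $(1+\varphi'(\xi))^{-q}$ for $n=0$. The essential ingredient (standard for weights in $\mathcal{W}$, and developed in Section~2) is that $|k_{p,z}(\xi)|^p \omega(\xi)^{p/2}$ behaves like a bump of mass $1$ concentrated on $D(z,\tau(z))$, so that $|k_{p,z}(\xi)|^q \omega(\xi)^{q/2} \asymp \tau(z)^{-2q/p}$ there and is negligibly small outside. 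Combined with subharmonicity of $|g|^q$ (respectively $|g'|^q/(1+\varphi'(\xi))^q$), this gives the pointwise equivalences
\[
 GB^\phi_{1,p,q}(g)(z) \asymp |g(z)|^q\, \tau(z)^{2-2q/p}, \qquad
 GB^\phi_{0,p,q}(g')(z) \asymp \frac{|g'(z)|^q}{(1+\varphi'(z))^q}\, \Delta\varphi(z)^{q/p-1},
\]
using $\tau\asymp(\Delta\varphi)^{-1/2}$.

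For (A), I would invoke the necessary condition from Proposition~\ref{prop1}(A) (when $q<\infty$) and from Theorem~\ref{thm1}(B) (when $q=\infty$), both of which reduce, under $\phi(z)=z$, to
\[
 |g(z)| \le C\, \tau(z)^{2/p-2/q}.
\]
Since $p<q$ the exponent $2/p-2/q$ is strictly positive, and since $\tau(z)\to 0$ as $|z|\to 1^-$ for every $\omega\in\mathcal{W}$, this forces $g(z)\to 0$ uniformly as $|z|\to 1^-$. The main (and essentially only non-routine) step is then to extend $g$ continuously to $\overline{\D}$ with vanishing boundary values and apply the maximum modulus principle to conclude $g\equiv 0$; the converse is trivial. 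The uniform boundary decay is the crux, and explains why $J_g$ is so rigid on large Bergman spaces compared to its behavior on standard ones.

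For (B), I apply Theorem~\ref{thm1}(D) with $\phi(z)=z$, $s=p/(p-q)$, and $d\lambda=dA/\tau^2$. Substituting the simplification of $GB^\phi_{1,p,q}(g)$ gives
\[
 \int_\D \bigl(|g(z)|^q\,\tau(z)^{2-2q/p}\bigr)^{p/(p-q)}\, \tau(z)^{-2}\, dA(z) < \infty,
\]
and a routine computation shows that the exponent of $\tau$ collapses to $0$ while the exponent of $|g|$ becomes $pq/(p-q)$, yielding exactly $g\in L^{pq/(p-q)}(\D,dA)$. The case $p=\infty$ (so $s=1$) is handled identically with the $A^\infty_\omega$ kernel normalization.

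For (C) and (D), the identity $V_g = GV_{(\phi,g')}|_{\phi=\mathrm{id}}$ lets me apply Theorem~\ref{thmV1} with $g$ replaced by $g'$. Part (C) then splits into four subcases: for $p\le q<\infty$, Theorem~\ref{thmV1}(A) together with the simplification of $GB^\phi_{0,p,q}(g')$ gives \eqref{eq-V7}; for $p<q=\infty$, Theorem~\ref{thmV1}(B) with $\phi=\mathrm{id}$ directly reads
\[
 MV_{g',\mathrm{id},\omega}(z)=\frac{|g'(z)|}{1+\varphi'(z)}\,\Delta\varphi(z)^{1/p},
\]
which is \eqref{eq-V7} with $q=\infty$; and Theorem~\ref{thmV1}(C) handles $p=q=\infty$ where $\Delta\varphi^{1/p-1/q}=1$. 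For (D), substituting the pointwise equivalence into the $L^r(d\lambda)$ condition of Theorem~\ref{thmV1}(D) produces, by the same exponent arithmetic as in (B),
\[
 \int_\D \Bigl(\frac{|g'(z)|}{1+\varphi'(z)}\Bigr)^{pq/(p-q)} dA(z) < \infty,
\]
which is \eqref{eq-V9}. The compactness halves of (C) follow because the sufficient condition for compactness in Theorem~\ref{thmV1}(A)--(C) is exactly the vanishing of the pointwise equivalent as $|z|\to 1^-$.
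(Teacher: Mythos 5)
Your reduction of (A) to the necessary conditions of Proposition~\ref{prop1}(A) and Theorem~\ref{thm1}(B), followed by the maximum modulus principle, is sound and matches the paper's argument in substance. The genuine gap is the pair of claimed pointwise equivalences
\[
GB^{id}_{1,p,q}(g)(z)\asymp |g(z)|^q\,\tau(z)^{2-2q/p},\qquad GB^{id}_{0,p,q}(g')(z)\asymp \frac{|g'(z)|^q}{(1+\varphi'(z))^q}\,\Delta\varphi(z)^{q/p-1},
\]
on which the rest of your proposal rests. Localization of the kernel on $D_\delta(z)$ (Lemma~\ref{lem:RK-PE1}(c)) together with the sub-mean-value property gives only the lower bound $\gtrsim$; the upper bound $\lesssim$ is false pointwise, since the integral sees $g$ on all of $\D$ while the right-hand side vanishes at every zero of $g$ (take any nonzero $g$ with a zero in $\D$). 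Subharmonicity runs in the wrong direction for an upper bound. Consequently the necessity halves of (B), (C), (D) go through, but the sufficiency halves --- passing from the simple condition on $g$ or $g'$ to the $GB$ conditions of Theorems~\ref{thm1} and~\ref{thmV1} --- are not established by your argument.

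Each sufficiency direction needs its own estimate, and the paper supplies three different ones. For (C) with $p\le q<\infty$ one pulls $\sup_{z}T(g,\varphi)(z)$ out of the integral and evaluates the remaining kernel integral exactly ($\asymp\|k_{q,z}\|^q_{A^q_\om}=1$, using $\Delta\varphi\asymp\tau^{-2}$ and Lemma~\ref{lem:RK-PE1}(a)); this is a sup-to-sup bound, not a pointwise one. For (D) the paper applies H\"older's inequality inside $GB^{id}_{0,p,q}(g')(z)^{p/(p-q)}$, then Fubini and the kernel norm estimate of Lemma~\ref{nEstim}, to dominate $\int_\D GB^{id}_{0,p,q}(g')^{p/(p-q)}\,d\lambda$ by $\int_\D\bigl(|g'|/(1+\varphi')\bigr)^{pq/(p-q)}\,dA$. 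For (B) the paper bypasses the Berezin transform entirely: by Lemma~\ref{lem2}, boundedness (equivalently compactness) reduces to the genuinely local condition $\tau(z)^{-2}\int_{D_\delta(z)}|g|^q\,dA\in L^{p/(p-q)}(\D,dA)$, which is comparable with $g\in L^{pq/(p-q)}(\D,dA)$, and the converse is a direct H\"older estimate on $\|J_gf\|_{A^q_\om}$. If you wish to keep your route through $GB^{id}_{1,p,q}$ for (B), you would need the $L^{p/(p-q)}$-norm equivalence of the Berezin transform with the averaging function (Theorem~\ref{thm:CMOP}), not a pointwise identification with $|g(z)|^q\tau(z)^{2-2q/p}$.
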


In the next corollary, we consider the weighted Bergman space $A^p(\omega) = A^p_{\omega^{2/p}}$, that is, 
$$
	A^p(\omega) = \left\{ f\in H(\D) : \|f\|_{A^p(w)}^p = \int_{\D} |f(z)|^p\, \omega(z)\, dA(z)<\infty\right\},
$$
where the weight $\omega\in\mathcal{W}$ satisfies the condition
$$
\Delta\phi(z)\asymp ((1-|z|)^{t}\psi_\om(z))^{-1}, \,\, z\in \D, \,\, \textrm{for some}\,\,\, t\geq 1.
$$
In particular, we obtain the conditions of Theorem 2 in~\cite{PP1} for boundedness and compactness of the operator $V_g : A^p(\omega) \to A^q(\omega)$.

\begin{cor}\label{cor2} Let  $ 0<p,q< \infty$, $\om \in \mathcal{W}$, and $g\in H(\D)$.
\begin{enumerate} 

\item[(I)] For  $p=q,$  we have the following statements
\begin{enumerate} 
 \item[(a)]  $GB^{id}_{0,p,q}(g') \in L^{\infty}(\D,dA)$ \quad if and only if $$\psi_\om(z)|g'(z)|\in L^{\infty}(\D,dA).$$
\item[(b)]  $\lim_{|z|\to1} GB^{id}_{0,p,q}(g')=0$ if and only if 
$$\lim_{|z|\to1}\psi_\om(z)|g'(z)|=0.$$
\end{enumerate}
\item[(II)] Let $\om \in \mathcal{W}$ with 
\begin{equation}\label{jcn}
\Delta\phi(z)\asymp ((1-|z|)^{t}\psi_\om(z))^{-1}, \,\, z\in \D, \,\, \textrm{for some}\,\,\, t\geq 1.
\end{equation}
 For  $p<q,$  the following statements are equivalent:
\begin{enumerate} 
\item[(c)]  $GB^{id}_{0,p,q}(g') \in L^{\infty}(\D,dA).$
\item[(d)]  The function $g$ is constant.
\end{enumerate}
\item[(III)] For  $q<p$,
$$
	GB^{id}_{0,p,q}(g')\in L^{p/(p-q)}(\D,d\lambda)
	\implies g\in A^{pq/(p-q)}(\om).
$$
\end{enumerate}
\end{cor}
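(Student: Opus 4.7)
The plan is to deduce Corollary~\ref{cor2} from Theorem~\ref{thmV1} applied with $\phi(\xi)=\xi$ and symbol $g'$, so that $V_g = GV_{(id,\,g')}$. Parts~(A) and~(D) of that theorem already recast the three statements of the corollary as equivalences between conditions on the Berezin-type transform $GB^{id}_{0,p,q}(g')$ and simpler conditions on $g$ or $g'$. The analytic work is therefore entirely about simplifying this Berezin-type integral under the various hypotheses on $p$, $q$ and $\omega$.

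The central tool I would use in every part is a two-sided pointwise comparison of $GB^{id}_{0,p,q}(g')(z)$ with the quantity $|g'(z)|^q(1+\varphi'(z))^{-q}\tau(z)^{2-2q/p}$. The lower bound comes from restricting the defining integral to a hyperbolic disc $D(z,\delta\tau(z))$ of area $\asymp\tau(z)^2$, on which the peak estimates for the reproducing kernel from Section~\ref{sec2:1} give $|k_{p,z}(\xi)|^q\omega(\xi)^{q/2}\gtrsim \tau(z)^{-2q/p}$, combined with the sub-mean-value property of $|g'|^q$ on that disc. The matching upper bound (needed only in Part~(I)) uses $\|k_{p,z}\|^p_{A^p_\omega}\asymp 1$ together with the Lipschitz behaviour of $\varphi'$, $\omega$ on hyperbolic discs in the class $\mathcal W$. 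Once this comparison is established, Part~(I) is immediate: for $p=q$ the exponent $2-2q/p$ is zero, and the standard relation $(1+\varphi'(z))^{-1}\asymp\psi_\omega(z)$ valid for weights in $\mathcal W$ converts the comparison into the equivalence with $\psi_\omega|g'|$ needed for~(a); statement~(b) follows from the same argument on the little-$o$ side.

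For Part~(II), the exponent $2-2q/p$ becomes strictly negative, so under~\eqref{jcn} the factor $\tau(z)^{2-2q/p}$ grows polynomially in $1/(1-|z|)$ and $L^\infty$-boundedness of $GB^{id}_{0,p,q}(g')$ forces $|g'(z)|(1+\varphi'(z))^{-1}$ to decay faster at the boundary than any analytic derivative can; by a standard maximum-modulus argument this is incompatible with $g'\not\equiv 0$, so $g$ must be constant (the converse is trivial). For Part~(III) with $q<p$, Theorem~\ref{thmV1}(D) reduces matters to $GB^{id}_{0,p,q}(g')\in L^{p/(p-q)}(\D,d\lambda)$, which after raising the lower bound to the power $r=p/(p-q)$ and integrating against $d\lambda = dA/\tau^2$ yields a weighted integral of $|g'|^{pq/(p-q)}$ whose weight, after collecting exponents, simplifies to precisely the density appearing in the Constantin--Pel\'aez Littlewood--Paley description of $\|g\|^{pq/(p-q)}_{A^{pq/(p-q)}(\omega)}$, giving $g\in A^{pq/(p-q)}(\omega)$. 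The main obstacle throughout is keeping track of the interaction between the four weight quantities $\omega$, $\varphi'$, $\tau$ and $\psi_\omega$, and verifying, especially under~\eqref{jcn}, that the exponents of $\tau$ produced by the kernel-concentration bound line up exactly with those in the Littlewood--Paley formula; this bookkeeping relies on the standard properties of the class $\mathcal W$ catalogued in~\cite{PP1}.
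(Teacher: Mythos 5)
Your treatment of Parts (I) and (III) follows essentially the paper's route: localize the integral on $D_\delta(z)$, apply the kernel peak estimate \eqref{eqn:RK-Diag1} together with the sub-mean-value inequality \eqref{Eq-gamma} to get a pointwise lower bound, and match exponents with the Littlewood--Paley density. One caveat in (I): the ``two-sided pointwise comparison'' $GB^{id}_{0,p,p}(g')(z)\asymp \big(\psi_\om(z)|g'(z)|\big)^p$ cannot hold pointwise (take $g'$ vanishing at an interior point where the integral is still positive); the upper bound is only $GB^{id}_{0,p,p}(g')(z)\lesssim\sup_{\xi}\big(\psi_\om(\xi)|g'(\xi)|\big)^p$, obtained by factoring the supremum out of the integral and using $\|k_{p,z}\|_{A^p_\om}=1$. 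That is all one needs for (a), and it is what the paper does.

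The genuine gap is in Part (II). Your lower bound, in the $A^p_\om$ normalization you are implicitly using, yields only $|g'(z)|\lesssim (1+\varphi'(z))\,\tau(z)^{2/p-2/q}$, and this does \emph{not} force $g'\equiv 0$: for $\om=\om_{0,1}$ one has $1+\varphi'(z)\asymp(1-|z|)^{-2}$ and $\tau(z)\asymp(1-|z|)^{3/2}$, so the right-hand side is $\asymp(1-|z|)^{3(1/p-1/q)-2}$, which stays bounded away from $0$ whenever $1/p-1/q\le 2/3$ (e.g.\ $p=1$, $q=2$). Indeed, by Theorem~\ref{thmV1}(A) and Corollary~\ref{cor1}(C), in that normalization condition (c) is equivalent to $|g'|(1+\varphi')^{-1}\Delta\varphi^{1/p-1/q}\in L^\infty(\D,dA)$, which admits plenty of non-constant $g$; so the polynomial growth of $\tau(z)^{2-2q/p}$ alone cannot prove (c)$\Rightarrow$(d). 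The paper works instead in the $A^p(\om)=A^p_{\om^{2/p}}$ normalization indicated before the corollary; there the peak estimate leaves an uncancelled factor $\om(z)^{1-q/p}$ in the lower bound---equivalently the factor $\|K_z\|^{2s}_{A^2(\om)}$ with $s=2/p-2/q>0$---and it is this \emph{exponentially} growing quantity, combined with hypothesis \eqref{jcn} and Lemma~2.3 of \cite{PP1} to show $\|K_z\|^{2s}_{A^2(\om)}\psi_\om(z)\to\infty$, that forces $|g'(z)|\to 0$ at the boundary and hence $g'\equiv 0$ by the maximum principle. This factor, and the precise role of \eqref{jcn}, are exactly what your sketch dismisses as bookkeeping; without them Part (II) does not go through.
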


\subsection{Outline}
In Section~\ref{preliminaries} we provide the basic definitions and results that are needed to deal with the weights $\omega$ in $\mathcal{W}$, and consider useful estimates for the reproducing kernel of $A^p_\omega$.  In Section~\ref{carleson}, we recall known geometric characterizations of Carleson measures, and in Section~\ref{embedding} we establish embedding theorems of $S^p_\omega$ into $L^q(\D,d\mu),$ for  $0<p,q\le \infty$ and $\omega \in \mathcal{W},$
where
\begin{equation}\label{e:S^p}
	S^p_\omega:=\Bigg\{f\in H(\D) : \|f\|_{S^p_\omega} = \int_{\D}|f(z)|^p\,\frac{\omega(z)^{p/2}}{(1+\varphi'(z))^p} \,{dA(z)<\infty}\Bigg\}
\end{equation}
and
\begin{equation}\label{e:S^infty}
	S^\infty_\omega:=\Bigg\{f\in H(\D) : \|f\|_{S^\infty_\omega}=\sup_{z\in\D}|f(z)|\,\frac{\omega(z)^{1/2}}{1+\varphi'(z)} \,{<\infty}\Bigg\}.
\end{equation}

In Section \ref{proofs1}, we prove Theorems \ref{thm1} and \ref{thmV1} using the embedding theorems, the strong decay of the weights $e^{-2\varphi}$ and the following Littlewood-Paley type formulas (see (9.3) of \cite{CoPe} and \cite{MU2019}):
\begin{align}\label{littleW}
	\|f \|^p_{A^p_{\omega}} &\asymp |f(0)|+  \int_{\D}|f'(z)|^p  \,\frac{\omega(z)^{p/2}}{(1+\varphi'(z))^p}\, dA(z),\\
	\label{littleW1}
	\|f \|_{A^\infty_{\omega}} &\asymp |f(0)|+  \sup_{z\in\D}|f'(z)|  \,\frac{\omega(z)^{1/2}}{(1+\varphi'(z))}.
\end{align}
Finally, Proposition \ref{prop1} and Corollaries \ref{cor1} and \ref{cor2} are proved in Section \ref{proofs2}.

\medskip

Throughout the paper, we use the notation $a \lesssim b$ to indicate that
there is a constant $C > 0$ with $a \le C b$. By $a \asymp b$ we mean that $a \lesssim b$ and $b \lesssim a$. For simplicity, we write $L^p_\om$ and $A^p_{\om}$  for $L^p(\D,\om^{p/2}\,dA)$ and $A^p(\D,\om^{p/2}\, dA)$, respectively.

\section{Preliminaries and basic properties}\label{preliminaries}

A positive function $\tau$ on $\D$ is said to be of class $\mathcal{L}$ if there are two constants $c_1$ and $c_2$ such that
\begin{equation}\label{e:A}
	\tau(z)\le c_ 1\,(1-|z|)\quad \text{\rm for all}\ z\in\D
\end{equation}
and
\begin{equation}\label{e:B}
	|\tau(z)-\tau(\zeta)|\le c_ 2\,|z-\zeta| \quad \text{\rm for all}\ z,\zeta\in \D.
\end{equation}
For such $c_1$ and $c_2$, we set 
$$
	m_\tau : =  \tfrac14{\min(1, c_1^{-1}, c_2^{-1})}.
$$
Given $a\in \D$ and $\delta>0$, we denote by $D_\delta(a)$ the Euclidean disc centered at $a$ with radius $\delta \tau(a)$. It follows from~\eqref{e:A} and~\eqref{e:B} (see \cite[Lemma 2.1]{PP1}) that if $\tau \in\mathcal{L}$  and $ z \in  D_\delta(a),$ then
\begin{equation}\label{eqn:asymptau}
	\frac{1}{2}\,\tau(a)\leq \tau(z) \leq  2 \,\tau(a),
\end{equation}
whenever $\delta \in (0, m_\tau).$ These inequalities will be used frequently in what follows.

\begin{defn}\label{definit}
We say that a weight $\omega$ is of \emph{class  $\mathcal{L}^*$} if it is of the form $\omega =e^{-2\varphi}$, where $\varphi \in C^2(\D)$ with $\Delta \varphi>0$, and $\big (\Delta \varphi (z) \big )^{-1/2}\asymp \tau(z)$, with $\tau$ being a function in the class $\mathcal{L}$. Here $\Delta$ denotes the classical Laplace operator.
\end{defn}

It is straightforward to see that $\mathcal{W}\subset \mathcal{L}^*$. The following result (see~\cite[Lemma 2.2]{PP1}) implies that the point evaluation functional at each $z\in\D$ is bounded on $A^2_{\omega}$.

\begin{otherl}\label{lem:subHarmP}
Let $\omega \in \mathcal{L}^*$, $0<p<\infty$, and  $z\in \D$. If $\beta\in \mathbb{R},$ there exists $ M \geq 1$  such that
\[ |f(z)|^p \omega(z)^{\beta} \leq  \frac{M}{\delta^2\tau(z)^2}\int_{D_\delta(z)}{|f(\xi)|^p \omega(\xi)^{\beta} \   dA(\xi)}\]
for all $ f \in H(\mathbb{D})$  and all sufficiently small $\delta > 0 .$ 
\end{otherl}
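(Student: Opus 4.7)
The plan is to reduce the weighted sub-mean-value estimate to the ordinary sub-mean-value property for a holomorphic ``corrected'' function. Set $F(\xi):=f(\xi)e^{-(2\beta/p)h(\xi)}$, where $h$ is a holomorphic function on $D_\delta(z)$, to be constructed below, whose real part approximates $\varphi$. Since $F$ is holomorphic on $D_\delta(z)$, the function $|F|^p$ is subharmonic there, so the classical mean-value inequality on a disc of area $\pi\delta^2\tau(z)^2$ gives
\[
|f(z)|^p e^{-2\beta\,\Re h(z)}\;\le\;\frac{1}{\pi\delta^2\tau(z)^2}\int_{D_\delta(z)}|f(\xi)|^p e^{-2\beta\,\Re h(\xi)}\,dA(\xi).
\]
If I can ensure that $|\Re h-\varphi|\le C$ on $D_\delta(z)$ with $C$ independent of $z$, then since $\omega(z)^\beta = e^{-2\beta\varphi(z)}$, the conclusion follows with $M = \pi^{-1}e^{4|\beta|C}$.

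To construct $h$, I would solve the local Dirichlet problem $\Delta u = \Delta\varphi$ on $D_\delta(z)$ with $u|_{\partial D_\delta(z)}\equiv 0$, and then set $\Re h := \varphi - u$. Because $\varphi - u$ is harmonic on the simply connected disc $D_\delta(z)$, it admits a harmonic conjugate, producing the desired holomorphic $h$. The core estimate is the uniform bound $\|u\|_{L^\infty(D_\delta(z))}\lesssim \delta^2$. To establish it, I would rescale via $\xi = z+\delta\tau(z)\zeta$, $\zeta\in\D$, transferring the problem to the unit disc: the rescaled right-hand side $\tilde f(\zeta) = (\delta\tau(z))^2(\Delta\varphi)(z+\delta\tau(z)\zeta)$ satisfies $\|\tilde f\|_{L^\infty(\D)}\lesssim \delta^2\tau(z)^2 \cdot \tau(z)^{-2}\asymp \delta^2$, using $\Delta\varphi\asymp \tau^{-2}$ together with the comparison \eqref{eqn:asymptau}, valid for $\delta<m_\tau$. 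The Green's function bound on $\D$ (which gives $\|\tilde u\|_{L^\infty(\D)}\le \tfrac14\|\tilde f\|_{L^\infty(\D)}$) then yields $\|\tilde u\|_{L^\infty(\D)}\lesssim \delta^2$, and hence $\|u\|_{L^\infty(D_\delta(z))}\lesssim \delta^2$ uniformly in $z$.

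The main obstacle is precisely this uniform estimate on $u$; the naive approach of comparing $\varphi(z)$ and $\varphi(\xi)$ directly on $D_\delta(z)$ fails, because $|\nabla\varphi|$ may blow up like $\tau(z)^{-1}$ near $\partial\D$, so that $|\varphi(z)-\varphi(\xi)|$ need not remain bounded on a disc of radius $\delta\tau(z)$. It is only the harmonicity-defect piece $u$ of $\varphi$ that is genuinely small on such discs, and isolating it via the holomorphic-correction trick is what makes the argument go through. Once the bound $|\Re h-\varphi|\le C\delta^2$ is in hand, substituting it into the displayed sub-mean-value inequality for $|F|^p$ costs at most the multiplicative constant $e^{4|\beta|C\delta^2}\le e^{4|\beta|C}$ (for $\delta\le 1$), finishing the proof.
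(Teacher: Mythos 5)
Your proposal is correct and follows essentially the same route as the source the paper relies on: the paper does not prove this lemma itself but imports it from \cite[Lemma 2.2]{PP1}, where the argument is precisely your holomorphic-correction trick --- solve $\Delta u=\Delta\varphi$ on $D_\delta(z)$ with vanishing boundary data, get $\|u\|_{L^\infty}\lesssim\delta^2$ by rescaling to the unit disc and using $\Delta\varphi\asymp\tau^{-2}$ together with \eqref{eqn:asymptau}, take a harmonic conjugate of $\varphi-u$ on the simply connected disc, and apply the sub-mean-value property of the subharmonic function $|fe^{-(2\beta/p)h}|^p$. Your observation that the naive pointwise comparison of $\varphi(z)$ with $\varphi(\xi)$ fails because $\varphi'(z)\tau(z)\to\infty$ for these weights is exactly the reason the correction is needed, and the only (harmless) discrepancy is the normalization of $dA$, which affects nothing but the constant $M$.
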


Using the preceding lemma and the fact that there exists $r_0\in [0,1)$ such that for all $a\in \D$ with $1>|a|>r_0,$ and any $\delta>0$ small enough we have 
$$
	\varphi'(a)\asymp \varphi'(z), \quad z\in D_\delta(a)
$$
(see statement (d) in \cite[Lemma 32]{CoPe}), one has 
\begin{equation}\label{Eq-gamma}
|f(z)|^p \frac{\omega(z)^{\beta}}{(1+\varphi'(z))^{\gamma}} \lesssim  \frac{1}{\delta^2\tau(z)^2}\int_{D_\delta(z)}{|f(\xi)|^p \frac{\omega(\xi)^{\beta}}{(1+\varphi'(\xi))^{\gamma}}  dA(\xi)}, 
\end{equation}
for $\beta, \gamma \in \mathbb{R}$.

The next lemma provides upper estimates for the derivatives of functions
in $A^p_\om.$  Its proof is similar to the case of doubling measures $\Delta\varphi$ in Lemma 19 of \cite{MMO}, and it can be found in the following form in~\cite{xiaof, O}.

\begin{otherl}\label{DERI}
Let $ \omega\in\mathcal{L}^*$ and $0 < p < \infty.$  For any $\delta_0 > 0$ sufficiently small  there exists a constant $C(\delta_0)> 0$ such that $$|f'(z)|^{p}\omega(z)^{p/2} \leq \frac{C(\delta_0)}{\tau(z)^{2+p}}\bigg(\int_{D(\delta_0\tau(z)/2)}{|f(\xi)|^p \,\omega(\xi)^{p/2} dA(\xi)}\bigg)^{1/p}, $$
for all $z\in \D$ and $f \in H(\mathbb{D})$.
\end{otherl}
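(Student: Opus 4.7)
The plan is to combine Cauchy's integral formula for $f'$ with the weighted subharmonic mean value of Lemma~\ref{lem:subHarmP}, in the manner of \cite[Lemma~19]{MMO}. I would fix $z\in\D$, pick $\delta_0>0$ sufficiently small, and set $r:=\delta_0\tau(z)/4$.

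The first step is Cauchy's formula on the circle $|w-z|=r$, which gives $|f'(z)|^p\le r^{-p}\max_{|w-z|=r}|f(w)|^p$. The second step is to replace this pointwise maximum by a weighted $L^p$ average: for each $w$ with $|w-z|=r$, choose $\delta_1<m_\tau$ small enough that $D_{\delta_1}(w)\subset D(z,\delta_0\tau(z)/2)$ and apply Lemma~\ref{lem:subHarmP} with $\beta=1/2$; using $\tau(w)\asymp\tau(z)$ from \eqref{eqn:asymptau}, this produces
\[
|f(w)|^p\omega(w)^{p/2}\lesssim \frac{1}{\tau(z)^2}\int_{D(z,\delta_0\tau(z)/2)}|f(\xi)|^p\omega(\xi)^{p/2}\,dA(\xi).
\]
Combining these two estimates with $r\asymp\tau(z)$ then yields a bound of the stated form, once the weight factor $\omega(w)^{p/2}$ coming from the circle has been reconciled with the value $\omega(z)^{p/2}$ at the centre.

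The main obstacle is precisely this last weight reconciliation: for a general $\omega\in\mathcal{L}^*$ the naive pointwise comparability $\omega(z)\asymp\omega(w)$ on a disk of radius $\tau(z)$ is not available, because $\varphi$ may oscillate substantially on such a disk. The standard workaround, and the route I would follow, is to introduce an osculating holomorphic function $h_z\in H(D(z,r))$ satisfying $\Re h_z(w)=\varphi(w)+O(|w-z|^2\Delta\varphi(z))$ (available because $\varphi\in C^2$ and $\Delta\varphi\asymp\tau^{-2}$), apply the preceding two steps to the holomorphic function $F(w):=f(w)e^{-h_z(w)}$, for which $|F|\asymp|f|\,\omega^{1/2}$ on $D(z,r)$, and then translate the bound back to $f'$ via the identity $f'=(F'+f\,h_z')\,e^{h_z}$. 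Choosing $\delta_0$ small enough forces the approximation error $\Re h_z-\varphi$ to be $O(1)$ on $D(z,r)$, after which the lemma follows. This is the route carried out for the class $\mathcal{L}^*$ in the references \cite{xiaof,O} cited in the paper.
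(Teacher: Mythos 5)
The paper offers no proof of Lemma~\ref{DERI}: it only points to \cite[Lemma 19]{MMO} and to \cite{xiaof,O}. Your sketch is indeed the argument underlying those references, and its first two thirds are correct and standard: Cauchy's estimate on a circle of radius $\asymp\tau(z)$, the weighted sub-mean-value property of Lemma~\ref{lem:subHarmP} (with $\beta=p/2$ rather than $1/2$), and the osculating holomorphic function $h_z$ with $\Re h_z=\varphi-u$, $\|u\|_{L^\infty(D(z,\delta_0\tau(z)))}\lesssim\delta_0^2$, so that $F:=fe^{-h_z}$ satisfies $|F|\asymp|f|\omega^{1/2}$ and $|F'(z)|\lesssim\tau(z)^{-1-2/p}\|F\|_{L^p(D)}$. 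You also correctly identify that the pointwise comparability of $\omega$ on $D_{\delta_0}(z)$ is unavailable and that the holomorphic substitute is needed.

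The gap is in the one step you do not carry out: returning from $F'$ to $f'$ via $f'=\big(F'+Fh_z'\big)e^{h_z}$. The extra term contributes $|f(z)|\,|h_z'(z)|\,\omega(z)^{1/2}$, and since $|h_z'(z)|=|\nabla(\varphi-u)(z)|\geq|\nabla\varphi(z)|-C/\tau(z)$ for \emph{every} admissible choice of $h_z$ (interior estimates give $\tau\|\nabla u\|_\infty\lesssim\delta_0$), your argument only yields
$$
|f'(z)|\,\omega(z)^{1/2}\ \lesssim\ \frac{1+\varphi'(|z|)\,\tau(z)}{\tau(z)^{1+2/p}}\left(\int_{D_{\delta_0}(z)}|f|^p\,\omega^{p/2}\,dA\right)^{1/p}.
$$
For the weights in $\mathcal{W}$ (e.g.\ $\varphi(r)=b(1-r)^{-\alpha}$, where $\varphi'\tau\asymp(1-r)^{-\alpha/2}$) the factor $\varphi'\tau$ is unbounded, so this is strictly weaker than the stated inequality, and the loss cannot be repaired: reading the statement in its homogeneous form (as printed it is not even homogeneous in $f$, the left side scaling like $|f|^p$ and the right side like $|f|$), the test functions of Lemma~\ref{Borichevlemma} contradict it, since Lemma~\ref{BL3} gives $|F'_{a,n}(a)|\omega(a)^{1/2}\asymp1+\varphi'(a)$ while the right-hand side evaluates to $\asymp\tau(a)^{-1}\ll1+\varphi'(a)$. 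What \cite{MMO,xiaof} actually prove is a bound on the covariant derivative $|f'-fh_z'|\,\omega^{1/2}$, equivalently on $\nabla\big(|f|\omega^{1/2}\big)$, not on $|f'|\omega^{1/2}$ itself. The corrected bound displayed above is the one consistent with the Littlewood--Paley formula \eqref{littleW}, and it suffices for every invocation of Lemma~\ref{DERI} in the paper, where the lemma is only used to deduce that $f_n'\to0$ locally uniformly when $f_n\to0$ locally uniformly --- a fact that already follows from Cauchy's integral formula alone.
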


\par The following lemma on coverings is due to
Oleinik~\cite{O}.

\begin{otherl}\label{lem:Lcoverings}
Let $\tau$ be a positive function on $\D$ of class
$\mathcal{L}$, and let $\delta\in (0,m_{\tau})$. Then there exists
a sequence of points $\{z_ n\}\subset \D$ such that the following
conditions are satisfied:
\begin{enumerate}
\item[$(i)$] \,$z_ n\notin D_\delta(z_ k)$, \,$n\neq k$.

\item[$(ii)$] \, $\bigcup_ n D_\delta(z_ n)=\D$.

\item[$(iii)$] \, $\tilde{D}_\delta(z_ n)\subset
D_{3\delta}(z_ n)$, where $\tilde{D}_\delta (z_
n)=\bigcup_{z\in D_\delta(z_ n)}D_\delta(z)$,
$n\in \N$.

\item[$(iv)$] \,$\big \{D_{3\delta}(z_ n)\big \}$ is a
covering of $\D$ of finite multiplicity $N$.
\end{enumerate}
\end{otherl}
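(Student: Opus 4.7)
My plan is to build $\{z_n\}$ as a maximal separated family in $\D$ and then read off the four properties from standard area/Lipschitz bookkeeping. Concretely, I apply Zorn's lemma (or, equivalently, a greedy construction on an exhaustion of $\D$ by compact subsets) to the poset of all $S \subset \D$ satisfying the separation condition appearing in (i); chain unions stay in the poset, so a maximal element $S_0$ exists. The set $S_0$ is enumerable as $\{z_n\}$ because the shrunken discs $D_{\delta/4}(z_n)$ are pairwise disjoint, which follows from (i) and (\ref{eqn:asymptau}) once $\delta < m_\tau$, and each such disc has area $\asymp \tau(z_n)^2 > 0$, so only countably many can accumulate in $\D$.

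Next I would verify the four properties in turn. Property (i) holds by construction. For (ii), given $z \in \D$ I use maximality: adjoining $z$ to $\{z_n\}$ must violate (i), producing an index $n$ with $z \in D_\delta(z_n)$ or $z_n \in D_\delta(z)$. The asymmetry of (i) means the second alternative is not automatically the first, but the Lipschitz bound (\ref{e:B}) combined with $\delta < m_\tau$ forces $\tau(z) \asymp \tau(z_n)$, and so, with at most a mild adjustment of scale in the construction (for instance running the Zorn argument at $\delta/2$ and inflating at the end), one obtains $z \in D_\delta(z_n)$. Property (iii) is an immediate triangle inequality: if $w \in D_\delta(z)$ and $z \in D_\delta(z_n)$, then (\ref{eqn:asymptau}) yields $\tau(z) \leq 2\tau(z_n)$, so $|w - z_n| \leq |w-z| + |z-z_n| < \delta\tau(z) + \delta\tau(z_n) \leq 3\delta\tau(z_n)$. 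For (iv), fix $\zeta \in \D$ and set $I = \{n : \zeta \in D_{3\delta}(z_n)\}$; iterated use of (\ref{eqn:asymptau}) gives $\tau(z_n) \asymp \tau(\zeta)$ uniformly on $I$, so the pairwise disjoint small discs $\{D_{\delta/4}(z_n)\}_{n\in I}$ each have area $\gtrsim \tau(\zeta)^2$ and all lie inside a Euclidean disc of radius $\lesssim \tau(\zeta)$ centred at $\zeta$; a one-line area count bounds $|I|$ by a universal constant $N$.

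The main obstacle is the verification of the covering property (ii): because (i) is asymmetric in $n,k$, a naive maximality argument does not directly produce covering at the same scale $\delta$. The resolution uses the Lipschitz property of $\tau$ together with the smallness $\delta < m_\tau$ to interchange the two conditions $z \in D_\delta(z_n)$ and $z_n \in D_\delta(z)$ up to a harmless constant, which can then be absorbed by running the construction at a slightly smaller scale. Once (ii) is in hand, properties (iii) and (iv) are routine consequences of (\ref{eqn:asymptau}) and a disjoint-disc area count.
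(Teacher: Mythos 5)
First, note that the paper does not prove this lemma at all: it is imported verbatim from Oleinik \cite{O} (see also \cite{PP1}), so there is no internal proof to compare against; your argument has to stand on its own. Most of it does: the countability argument, the triangle-inequality proof of (iii), and the disjoint-disc area count for (iv) are all correct and routine consequences of \eqref{e:A}, \eqref{e:B} and \eqref{eqn:asymptau} (for (iv) you should note that $\tau(z_n)\asymp\tau(\zeta)$ follows from \eqref{e:B} directly rather than from \eqref{eqn:asymptau}, since the relevant radius is $3\delta\tau(z_n)$ and $3\delta$ need not be below $m_\tau$; and to get $N$ independent of $\delta$, as the paper asserts, you want the containing disc to have radius $\lesssim\delta\tau(\zeta)$, not $\lesssim\tau(\zeta)$ --- both follow from the same estimates).

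The genuine gap is exactly at the point you flag as ``the main obstacle'', and your proposed resolution does not close it. A maximal set $S_0$ for condition (i) at scale $\delta$ only yields, for an uncovered point $z$, some $n$ with $z_n\in D_\delta(z)$, whence $|z-z_n|<\delta\tau(z)\le\frac{4}{3}\delta\tau(z_n)$ by \eqref{e:B}; so you get covering at scale $\frac43\delta$, not $\delta$. Your fix --- run Zorn at $\delta/2$ and ``inflate at the end'' --- recovers (ii) at scale $\delta$ but then only guarantees $|z_n-z_k|\ge\frac{\delta}{2}\tau(z_k)$, so (i) fails at scale $\delta$: the two conditions pull in opposite directions and the constant cannot be ``absorbed'', because the lemma requires (i) and (ii) at the \emph{same} $\delta$ (and indeed a maximal $\delta$-separated set can genuinely fail to $\delta$-cover: with $\tau$ radial decreasing one can have $\delta\tau(z_n)\le|z_n|<\delta\tau(0)$, so that $0$ is uncovered yet cannot be adjoined). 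The standard repair, which is essentially Oleinik's construction, is to replace bare maximality by a greedy selection ordered by decreasing $\tau$: choose $z_{n+1}$ to maximize $\tau$ over $\D\setminus\bigcup_{k\le n}D_\delta(z_k)$ (the supremum is attained because $\tau\to0$ at $\partial\D$). Then $\tau(z_k)\ge\tau(z_{n+1})$ for $k\le n$, which gives $|z_k-z_{n+1}|\ge\delta\tau(z_k)\ge\delta\tau(z_{n+1})$, i.e.\ (i) in both directions at scale $\delta$; and if some $z$ were never covered, every chosen point would satisfy $\tau(z_n)\ge\tau(z)$, so the pairwise disjoint discs $D_{\delta/2}(z_n)\subset\D$ would have areas bounded below by a fixed positive constant, contradicting the finiteness of $A(\D)$. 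With this modification the rest of your argument goes through unchanged.
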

The multiplicity $N$ in the previous lemma  is independent of $\delta$, and it is easy to see that one can take, for example, $N=256$. Any sequence satisfying the conditions in Lemma \ref{lem:Lcoverings} will be called a $(\delta,\tau)$-\emph{lattice}. Note that $|z_n|\to 1^{-}$ as $n\to \infty.$ In what follows, the sequence $\{z_n\}$ will always refer to the sequence chosen in Lemma \ref{lem:Lcoverings}.

\subsection{Reproducing kernel estimates }\label{sec2:1}
The following norm estimates for the reproducing kernel $K_z$ valid for all $z\in\D$ can be found in \cite{BDK,LR1,PP1} when $p=2$ and in \cite{xiaof} when $p>0$, while for the estimate for the points close to the diagonal, see \cite[Lemma 3.6]{LR2}.

\begin{otherth}\label{RK-PE}
Let $K_z $ be the reproducing kernel of  $A^2_{\omega}.$ Then
\begin{enumerate}
\item[(a)] For  $\om\in \mathcal{W}$ and $0< p< \infty$, one has
\begin{equation}\label{Eq-NE}
\|K_z\|_{A^p_{\omega}}   \asymp  \omega(z)^{-1/2}\, \tau(z)^{2(1-p)/p},\qquad z\in \D.
\end{equation}
\begin{equation}\label{Eq-INF}
\|K_z\|_{A^\infty_{\omega}}   \asymp  \omega(z)^{-1/2}\, \tau(z)^{-2},\qquad z\in \D.
\end{equation}
\item[(b)] For all sufficiently small $\delta \in (0,m_{\tau})$ and $\om \in \mathcal{W},$  one has
\begin{equation}\label{RK-Diag}
|K_ z(\zeta)| \asymp \|K_ z\|_{A^2_{\omega}}\cdot \|K_{\zeta}\|_{A^2_{\omega}} ,\qquad \zeta \in D_\delta(z).
\end{equation}
\end{enumerate}
\end{otherth}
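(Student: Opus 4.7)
I would deduce both parts from the interplay of the reproducing identity $K_z(z) = \|K_z\|_{A^2_\omega}^2$, the submean estimate of Lemma \ref{lem:subHarmP}, a peak-function argument in the weighted setting, and the concentration of $K_z$ on Bergman discs. To begin with the $p=2$ case of \eqref{Eq-NE}, apply Lemma \ref{lem:subHarmP} to $K_z$ with $\beta = 1$ to obtain
$$|K_z(z)|^2\,\omega(z) \lesssim \tau(z)^{-2}\int_{D_\delta(z)}|K_z(\xi)|^2\,\omega(\xi)\,dA(\xi) \le \tau(z)^{-2}\,K_z(z),$$
so that $\|K_z\|_{A^2_\omega} = K_z(z)^{1/2} \lesssim \omega(z)^{-1/2}\tau(z)^{-1}$. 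The matching lower bound requires constructing a test function $f_z \in A^2_\omega$ with $\|f_z\|_{A^2_\omega}\asymp 1$ and $|f_z(z)|\asymp \omega(z)^{-1/2}\tau(z)^{-1}$; the reproducing inequality $|f_z(z)| \le \|f_z\|_{A^2_\omega}\|K_z\|_{A^2_\omega}$ then yields the lower bound. Such peak functions are constructed via Hörmander-type $\bar\partial$-estimates adapted to $\omega = e^{-2\varphi}$. The $A^\infty_\omega$ estimate \eqref{Eq-INF} follows from $\|K_z\|_{A^\infty_\omega}\asymp |K_z(z)|\omega(z)^{1/2}$ together with the submean argument.

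Next I would prove (b), after which the general $p$ case of (a) becomes essentially a calculation. The upper bound $|K_z(\zeta)|\le \|K_z\|_{A^2_\omega}\|K_\zeta\|_{A^2_\omega}$ is immediate from Cauchy--Schwarz. For the matching lower bound on $D_\delta(z)$, I would use a derivative estimate for $K_z$ in the spirit of Lemma \ref{DERI}, combined with $K_z(z) = \|K_z\|_{A^2_\omega}^2$ and the comparability $\|K_\zeta\|_{A^2_\omega}\asymp \|K_z\|_{A^2_\omega}$ for $\zeta \in D_\delta(z)$ (a consequence of the comparability of $\tau$ and $\omega$ on $D_\delta(z)$), to conclude via a Taylor expansion on the scale $\tau(z)$ that $|K_z(\zeta)|$ cannot drop below a fixed fraction of $|K_z(z)|$ on $D_\delta(z)$ after shrinking $\delta$. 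Granted (b), the $A^p_\omega$ norm of $K_z$ splits into a dominant contribution from $D_\delta(z)$,
$$\int_{D_\delta(z)}|K_z(\xi)|^p\,\omega(\xi)^{p/2}\,dA(\xi) \asymp \bigl(\omega(z)^{-1}\tau(z)^{-2}\bigr)^{p}\,\omega(z)^{p/2}\,\tau(z)^{2} = \omega(z)^{-p/2}\,\tau(z)^{2-2p},$$
and a tail over $\D\setminus D_\delta(z)$, which is negligible thanks to the off-diagonal decay of $K_z$ inherited from the rapid decay of $\omega$ (controlled again via Cauchy--Schwarz and the global estimate just established). Taking $p$-th roots yields \eqref{Eq-NE}.

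The main obstacle is the construction of the weighted peak function used in the lower bound of the $p=2$ case and the corresponding off-diagonal kernel decay needed to control the tail integral: both depend on Hörmander $\bar\partial$-estimates tailored to the class $\mathcal{W}$, which form the technical core of the reproducing-kernel theory in the large Bergman setting. It is for this reason that the present statement is invoked as a black box from \cite{BDK, LR1, PP1, xiaof, LR2} rather than reproved within the paper, with the only novelty of the plan above being the uniform packaging of the four regimes via Lemmas \ref{lem:subHarmP} and \ref{DERI}.
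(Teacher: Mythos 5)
First, a point of comparison: the paper does not prove this statement at all --- it is Theorem~\ref{RK-PE}, quoted from \cite{BDK,LR1,PP1} for $p=2$, from \cite{xiaof} for general $p$, and from \cite[Lemma 3.6]{LR2} for the near-diagonal estimate \eqref{RK-Diag} --- so there is no in-paper argument to measure yours against. Judged on its own, your plan handles the $p=2$ case of \eqref{Eq-NE} correctly: Lemma~\ref{lem:subHarmP} with $\beta=1$ gives the upper bound exactly as you say, and for the lower bound you do not need a fresh H\"ormander construction, since the test functions of Lemma~\ref{Borichevlemma} already serve as your peak functions ($F_{z,n}/\tau(z)$ has $A^2_\omega$-norm $\asymp 1$ and modulus $\asymp\omega(z)^{-1/2}\tau(z)^{-1}$ at $z$ by \eqref{BL1}).

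The serious problem is your proof of (b), which rests on the assertion that $\omega$, and hence $\|K_\zeta\|_{A^2_\omega}$, is comparable to its value at $z$ throughout $D_\delta(z)$. For every weight in $\mathcal{W}$ this is false: one has $\varphi'(z)\tau(z)\to\infty$ as $|z|\to 1$ (for $\omega_{\gamma,\alpha}$, $\varphi'(r)\tau(r)\asymp(1-r)^{-\alpha/2}$), so $\varphi$ oscillates by an unbounded amount over $D_\delta(z)$ no matter how small $\delta$ is, and $\omega(\zeta)/\omega(z)$ ranges over an interval like $[e^{-c\delta\varphi'(z)\tau(z)},e^{c\delta\varphi'(z)\tau(z)}]$. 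This is exactly why \eqref{RK-Diag} is stated with the factor $\|K_\zeta\|_{A^2_\omega}$ rather than $\|K_z\|_{A^2_\omega}^2$, and why \eqref{BL1} and \eqref{eqn:RK-Diag1} keep $\omega(\zeta)$ inside. Your Taylor expansion therefore cannot close: the derivative bound from Lemma~\ref{DERI} at a point $w$ of the segment carries the factor $\omega(w)^{-1/2}$, which exceeds $\omega(z)^{-1/2}$ by an unbounded factor, so the increment $|K_z(\zeta)-K_z(z)|$ is not $o(K_z(z))$ uniformly in $z$; and even if it were, you would only get $|K_z(\zeta)|\gtrsim\|K_z\|_{A^2_\omega}^2$, which is genuinely weaker than $\|K_z\|_{A^2_\omega}\|K_\zeta\|_{A^2_\omega}$ at points where $\omega(\zeta)<\omega(z)$. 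Two further steps fail for the related reason that Cauchy--Schwarz provides no decay in $|z-\zeta|$: the tail $\int_{\D\setminus D_\delta(z)}|K_z|^p\omega^{p/2}\,dA$ is majorized by it only through $\omega(z)^{-p/2}\tau(z)^{-p}\int_\D\tau(\zeta)^{-p}\,dA(\zeta)$, which by \eqref{e:A} diverges for $p\ge1$ and in any case exceeds the target $\omega(z)^{-p/2}\tau(z)^{2-2p}$ by the unbounded factor $\tau(z)^{p-2}$; and the upper bound in \eqref{Eq-INF} requires $\sup_\zeta|K_z(\zeta)|\omega(\zeta)^{1/2}\lesssim|K_z(z)|\omega(z)^{1/2}$, which the submean inequality (controlling a point value by an average, not a supremum by a point value) cannot give. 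All of these require a genuine pointwise off-diagonal estimate of the type \eqref{BL2} for the kernel itself, say $|K_z(\zeta)|\,\omega(z)^{1/2}\omega(\zeta)^{1/2}\lesssim\tau(z)^{-2}\bigl(\min(\tau(z),\tau(\zeta))/|z-\zeta|\bigr)^{N}$; that estimate --- not the peak functions --- is the part of the cited references that cannot be reconstructed from Lemmas~\ref{lem:subHarmP} and~\ref{DERI} alone, and your closing paragraph should retract the claim that Cauchy--Schwarz and the global $A^2$ bound suffice in its place.
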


The next lemma generalizes the statement $(a)$ of the above theorem. For the proof, see~\cite{Hi-w}.

\begin{otherl}\label{nEstim}
Let $K_z $ be the reproducing kernel of  $A^2_{\omega}$ where $\omega$ is a weight in  the class $\mathcal{W}$. For each $z\in \D,$  $0< p < \infty$ and $\beta \in \R,$ one has
\begin{equation}\label{Eq-NE1}
  \int_{\D} |K_{z}(\xi)|^p\,\omega(\xi)^{p/2}\,\tau(\xi)^{\beta}\,dA(\xi) \asymp \om(z)^{-p/2}\, \tau(z)^{2(1-p)+\beta}.
\end{equation}
\end{otherl}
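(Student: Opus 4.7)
The plan is to prove matching upper and lower bounds. For the lower bound, I would restrict the integral to the pseudohyperbolic disc $D_\delta(z)$ with $\delta\in(0,m_\tau)$ small. Part (b) of Theorem \ref{RK-PE} combined with the $p=2$ case of \eqref{Eq-NE} gives
$$
|K_z(\xi)|\asymp \|K_z\|_{A^2_\omega}\|K_\xi\|_{A^2_\omega}\asymp \omega(z)^{-1/2}\tau(z)^{-1}\omega(\xi)^{-1/2}\tau(\xi)^{-1},\qquad \xi\in D_\delta(z).
$$
On $D_\delta(z)$, \eqref{eqn:asymptau} yields $\tau(\xi)\asymp\tau(z)$, while the standard control of the oscillation of $\varphi$ on pseudohyperbolic discs for weights in $\mathcal{W}$ gives $\omega(\xi)\asymp\omega(z)$; moreover $|D_\delta(z)|\asymp\tau(z)^2$. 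Feeding these into the local integral produces the lower bound $\gtrsim \omega(z)^{-p/2}\tau(z)^{2(1-p)+\beta}$.

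For the upper bound, I would use the $(\delta,\tau)$-lattice $\{z_n\}$ of Lemma \ref{lem:Lcoverings}, decompose $\D$ into the pieces $D_\delta(z_n)$ (which cover with finite multiplicity $N$), and exploit $\tau(\xi)\asymp\tau(z_n)$ on each piece to write
$$
\int_\D |K_z|^p\,\omega^{p/2}\tau^\beta\,dA \asymp \sum_n \tau(z_n)^\beta \int_{D_\delta(z_n)} |K_z|^p\,\omega^{p/2}\,dA.
$$
Combined with the unweighted norm estimate $\|K_z\|_{A^p_\omega}^p\asymp\omega(z)^{-p/2}\tau(z)^{2(1-p)}$ from \eqref{Eq-NE}, it then remains to show that the mass $\int_{D_\delta(z_n)}|K_z|^p\omega^{p/2}\,dA$ is sufficiently concentrated where $\tau(z_n)\asymp\tau(z)$, so that
$$
\sum_n \tau(z_n)^\beta \int_{D_\delta(z_n)} |K_z|^p\omega^{p/2}\,dA \lesssim \tau(z)^\beta \,\|K_z\|_{A^p_\omega}^p.
$$

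The main obstacle is making this concentration quantitative for every real $\beta$. While \eqref{Eq-NE} says the total mass of $|K_z|^p\omega^{p/2}$ has the right order, the factor $(\tau(z_n)/\tau(z))^\beta$ can blow up for $\beta<0$ as $z_n$ approaches the boundary, so one needs a genuinely rapid off-diagonal decay of $|K_z(\xi)|\omega(\xi)^{1/2}$ that beats any polynomial power of $\tau$. For weights in $\mathcal{W}$, such sharp pointwise kernel estimates are available (see \cite{BDK,LR1,LR2,PP1}), and summing the resulting decay geometrically against $\tau(z_n)^\beta$ yields the required bound. Combining the matching upper and lower estimates then gives the asserted asymptotic.
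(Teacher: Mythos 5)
The paper itself contains no proof of Lemma~\ref{nEstim}; it simply refers the reader to \cite{Hi-w}. Measured against the standard argument for this class of weights (which is what the cited source carries out), your outline is the right one: near-diagonal kernel estimates on $D_\delta(z)$ for the lower bound, and a decomposition of $\D$ together with rapid off-diagonal decay of the kernel for the upper bound. One assertion, however, is false: for $\omega\in\mathcal{W}$ one does \emph{not} have $\omega(\xi)\asymp\omega(z)$ uniformly for $\xi\in D_\delta(z)$. Indeed, $\sup_{\xi\in D_\delta(z)}|\varphi(\xi)-\varphi(z)|$ is of order $\delta\,\varphi'(|z|)\,\tau(z)$, and for $\omega_{0,1}$ one has $\varphi'(r)\tau(r)\asymp(1-r)^{-1/2}\to\infty$, so $\omega(\xi)/\omega(z)=e^{-2(\varphi(\xi)-\varphi(z))}$ is not bounded above and below on $D_\delta(z)$; this failure of the weight to be essentially constant on the discs is exactly why Lemma~\ref{lem:subHarmP} is formulated the way it is. Fortunately your lower bound never actually uses this: after inserting $|K_z(\xi)|\asymp\omega(z)^{-1/2}\tau(z)^{-1}\omega(\xi)^{-1/2}\tau(\xi)^{-1}$ from \eqref{RK-Diag} and \eqref{Eq-NE}, the factors $\omega(\xi)^{-p/2}$ and $\omega(\xi)^{p/2}$ cancel identically, and only $\tau(\xi)\asymp\tau(z)$ from \eqref{eqn:asymptau} and $|D_\delta(z)|\asymp\tau(z)^2$ are needed. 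The lower bound therefore stands, but the justification should be corrected.

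For the upper bound you have correctly identified that everything hinges on a pointwise decay estimate with an arbitrarily large exponent, namely $|K_z(\xi)|\,\omega(z)^{1/2}\omega(\xi)^{1/2}\lesssim\tau(z)^{-1}\tau(\xi)^{-1}\min\bigl(1,\min(\tau(z),\tau(\xi))/|z-\xi|\bigr)^{M}$ for every $M>0$, which is available for $\omega\in\mathcal{W}$ in \cite{xiaof}. But the step you leave to the reader is the whole content of the lemma, so it should be written out: on the annuli $R_j(z)=\{2^j\delta\tau(z)<|z-\xi|\le 2^{j+1}\delta\tau(z)\}$ the Lipschitz property \eqref{e:B} gives $\tau(\xi)\lesssim 2^j\tau(z)$, and in the dangerous regime $\tau(\xi)<\tau(z)$ with $\beta-p<0$ the factor $\min(\tau(z),\tau(\xi))^{Mp}=\tau(\xi)^{Mp}$ absorbs $\tau(\xi)^{\beta-p}$ once $Mp\ge p-\beta$; in either regime the contribution of $R_j(z)$ is $\lesssim 2^{j(|\beta-p|+2-Mp)}\,\tau(z)^{\beta-p+2}$, which sums geometrically provided $Mp>|\beta-p|+2$. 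Reinstating the prefactor $\omega(z)^{-p/2}\tau(z)^{-p}$ yields $\omega(z)^{-p/2}\tau(z)^{2(1-p)+\beta}$ as required. Note in particular that $M$ must be chosen depending on $\beta$ and $p$, so a decay estimate with one fixed exponent would not suffice; this is the same mechanism as in the proof of Proposition~\ref{At1-pp}(b), where the exponent $3n$ plays the role of $M$.
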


The following result gives estimates for the normalized reproducing kernel $k_{p,z}$ in $A^p_{\om}$ defined by
$$
	k_{p,z}= K_z/\|K_z\|_{A^p_{\om}}
$$
for $z\in \D$.

\begin{lem}\label{lem:RK-PE1}
Let $\omega \in \mathcal{W}$. Then
\begin{enumerate}
\item[(a)] For each $z\in \D,$ $0< p\le \infty,$ and  $0< q < \infty,$ 
\begin{equation}\label{eqn:Eq-NE1}
|k_{p,z}(\zeta)|^q   \asymp \tau(z)^{2(1-\frac{q}{p})}|k_{q,z}(\zeta)|^q,\qquad \zeta\in \D.
\end{equation}
\item[(b)] For $q=\infty,$
$$
	|k_{p,z}(\zeta)|  \asymp \tau(z)^{-2/p}|k_{q,z}(\zeta)|,\qquad \zeta\in \D.
$$
\item[(c)] For all $\delta \in (0,m_{\tau})$ sufficiently small,
\begin{equation}\label{eqn:RK-Diag1}
|k_ {p,z}(\zeta)|^p \, \omega(\zeta)^{p/2} \asymp \tau(z)^{-2} ,\qquad \zeta \in D_\delta(z).
\end{equation}
\end{enumerate}
\end{lem}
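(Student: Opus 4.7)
The proof is a direct computation built on the norm estimates for $K_z$ recorded in Theorem \ref{RK-PE}. The key idea is that, since $k_{p,z}$ and $k_{q,z}$ differ only by the scalar $\|K_z\|_{A^p_\omega}/\|K_z\|_{A^q_\omega}$, parts (a) and (b) reduce to dividing the appropriate norm asymptotics, while part (c) is obtained by combining these norm estimates with the on-diagonal size estimate \eqref{RK-Diag} and the local invariance of $\tau$ and $\omega$ on the disc $D_\delta(z)$.

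For (a), I would simply observe that
\[
	|k_{p,z}(\zeta)|^q = |k_{q,z}(\zeta)|^q \cdot \frac{\|K_z\|_{A^q_\omega}^q}{\|K_z\|_{A^p_\omega}^q}
\]
and then plug in \eqref{Eq-NE} for the two norms. Since $\om\in\mathcal W$, the factor $\om(z)^{-1/2}$ appears in both numerator and denominator and cancels, leaving a pure power of $\tau(z)$. A short arithmetic check gives the exponent $2(1-q/p)$ in both the case $0<p<\infty$ (using \eqref{Eq-NE} twice) and the case $p=\infty$ (using \eqref{Eq-INF} in the denominator and \eqref{Eq-NE} in the numerator, interpreting $2(1-q/p)$ as $2$). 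Part (b) is identical in spirit: the ratio $\|K_z\|_{A^\infty_\om}/\|K_z\|_{A^p_\om}$ is computed directly from \eqref{Eq-NE} and \eqref{Eq-INF}, producing $\tau(z)^{-2/p}$.

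For (c) I would invoke \eqref{RK-Diag} to write, for $\zeta\in D_\de(z)$,
\[
	|K_z(\zeta)| \asymp \|K_z\|_{A^2_\om}\cdot \|K_\zeta\|_{A^2_\om}
	\asymp \om(z)^{-1/2}\tau(z)^{-1}\,\om(\zeta)^{-1/2}\tau(\zeta)^{-1},
\]
the second step following from \eqref{Eq-NE} with $p=2$. On $D_\de(z)$ one has $\tau(\zeta)\asymp\tau(z)$ by \eqref{eqn:asymptau}, and the standard pointwise control of $\varphi$ on $D_\de(z)$ (together with $\om=e^{-2\varphi}$) yields $\om(\zeta)\asymp\om(z)$. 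Dividing by $\|K_z\|_{A^p_\om}$ using \eqref{Eq-NE} (or \eqref{Eq-INF} in the endpoint) and raising to the $p$-th power gives $|k_{p,z}(\zeta)|^p\om(\zeta)^{p/2}\asymp\tau(z)^{-2}$, as required.

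The only subtle point, and the closest thing to an obstacle, is the $p=\infty$ endpoint in (a) and (b), where one must switch to the sup-norm estimate \eqref{Eq-INF}; this is mostly a bookkeeping matter. The $\om(\zeta)\asymp\om(z)$ comparison in (c) relies implicitly on a radius $\de$ small enough that the local estimates of \cite[Lemma 32]{CoPe} apply, so I would state (c) for $\de\in(0,m_\tau)$ sufficiently small, consistent with Theorem \ref{RK-PE}(b).
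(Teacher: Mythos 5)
Your approach is exactly the paper's: the authors give no argument beyond ``immediate from Theorem~\ref{RK-PE},'' and your computation of the norm ratios from \eqref{Eq-NE} and \eqref{Eq-INF} for (a) and (b), and the use of \eqref{RK-Diag} together with \eqref{eqn:asymptau} for (c), is the intended derivation; the exponent arithmetic checks out in all cases, including the $p=\infty$ endpoint.

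One intermediate assertion in your proof of (c) is false, though harmless: for $\om\in\mathcal{W}$ one does \emph{not} have $\om(\zeta)\asymp\om(z)$ for $\zeta\in D_\delta(z)$ with fixed $\delta$. Indeed $|\varphi(\zeta)-\varphi(z)|$ is of order $\varphi'(z)\,\delta\tau(z)$, and for the exponential weights $\om_{\gamma,\alpha}$ one has $\varphi'(z)\tau(z)\asymp(1-|z|)^{-\alpha/2}\to\infty$, so $\om$ is not roughly constant on $D_\delta(z)$ --- this is precisely the feature that distinguishes large Bergman spaces from the standard ones (only $\tau$, via \eqref{eqn:asymptau}, and $\varphi'$, via Lemma~32 of \cite{CoPe}, enjoy such local invariance). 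Fortunately you never need this: writing
$|k_{p,z}(\zeta)|^p\om(\zeta)^{p/2}=\|K_z\|_{A^p_\om}^{-p}\,|K_z(\zeta)|^p\,\om(\zeta)^{p/2}$ and substituting \eqref{RK-Diag} with \eqref{Eq-NE} at $p=2$, the factor $\om(\zeta)^{-p/2}$ coming from $\|K_\zeta\|_{A^2_\om}^p$ cancels the $\om(\zeta)^{p/2}$ exactly, and only $\tau(\zeta)\asymp\tau(z)$ is used. You should delete the $\om(\zeta)\asymp\om(z)$ step rather than try to justify it.
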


\begin{proof} 
The proof is immediate from Theorem~\ref{RK-PE}.
\end{proof}

\subsection{Test functions and some estimates }\label{sec2:2}
The following result on test functions was obtained in  \cite{PP1} and Lemma 3.3 in \cite{BDK}. Without loss of generality, we modified the original version by taking $\omega(z)^{p/2}$ instead of $\omega(z)$ when $0<p<\infty.$ 

\begin{otherl}\label{Borichevlemma}
Let $n\in\mathbb{N}\setminus\{0\}$ and $\omega\in \mathcal{W}$. There is a number $\rho_0 \in (0,1)$ such that for each $a\in \D$ with $|a|>\rho_0$
there is a function $F_{a,n}$ analytic in $\D$ with
\begin{equation}\label{BL1}
|F_{a,n}(z)|\,\omega(z)^{1/2}\asymp 1 \quad \textrm{ if }\quad
|z-a|<\tau(a),
\end{equation}
and
\begin{equation}\label{BL2}
|F_{a,n}(z)|\,\omega(z)^{1/2}\lesssim \min \left
(1,\frac{\min\big(\tau(a),\tau(z)\big)}{|z-a|} \right )^{3n}, \quad
z\in \D.
\end{equation}
Moreover,  
\begin{enumerate}
\item[(a)] For $0<p<\infty,$ the function $F_{a,n}$  belongs to $A^p(\omega)$  with $$\|F_{
a,n}\|_{A^p_\om}\asymp \tau(a)^{2/p}.$$
\item[(b)]  For $p=\infty,$ the function $F_{a,n}$  belongs to $A^\infty_{\omega}$  with $$\|F_{
a,n}\|_{A^\infty_{\omega}}\asymp 1.$$ 
\end{enumerate}
\end{otherl}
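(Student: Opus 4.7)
The plan is to follow the construction of Borichev–Dhuez–Kellay and of Pau–Peláez, building $F_{a,n}$ essentially as a holomorphic analogue of the multiplier $e^{\varphi(z)}$ that peaks at $a$ and decays polynomially fast elsewhere. The starting observation is that $\varphi$ is smooth with $\Delta\varphi\asymp \tau^{-2}$, so near $a$ one can replace $\varphi(z)$ by its holomorphic Taylor polynomial $\Phi_a(z)$ of order $2$ with quadratic error, making $e^{\Phi_a(z)-\varphi(z)}\asymp 1$ on the Euclidean disc of radius $\asymp\tau(a)$ around $a$.

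First I would fix $\rho_0$ close to $1$ so that the kernel/weight asymptotics used throughout (in particular $\varphi'(z)\asymp\varphi'(a)$ and $\tau(z)\asymp\tau(a)$ for $z\in D_{\delta}(a)$, together with Lemma 32 of \cite{CoPe}) hold for $|a|>\rho_0$. Choose a smooth radial cutoff $\chi_a$ equal to $1$ on $\{|z-a|\le \tau(a)\}$ and supported in $\{|z-a|\le 2\tau(a)\}$, with $|\bar\partial\chi_a|\lesssim \tau(a)^{-1}$. Set $g_{a,n}(z):=\chi_a(z)\,e^{\Phi_a(z)}$, so that $|g_{a,n}(z)|\omega(z)^{1/2}\asymp 1$ on the peak disc and $|g_{a,n}(z)|\omega(z)^{1/2}\lesssim 1$ everywhere, with $\bar\partial g_{a,n}$ supported in the thin annulus $\{\tau(a)\le|z-a|\le 2\tau(a)\}$.

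Next I would holomorphically correct $g_{a,n}$ by solving a weighted $\bar\partial$-equation. Apply Hörmander's $L^2$ theorem with the strictly subharmonic weight
$$
\Psi_{a,n}(z):=2\varphi(z)+6n\log\!\left(1+\tfrac{|z-a|}{\tau(a)}\right)+2\log(1+|z|^2),
$$
whose Laplacian dominates $\Delta\varphi\asymp\tau^{-2}$; this yields $u_{a,n}$ with $\bar\partial u_{a,n}=\bar\partial g_{a,n}$ and
$$
\int_{\D}|u_{a,n}(z)|^{2}\,e^{-\Psi_{a,n}(z)}\,\tau(z)^{2}\,dA(z)\lesssim \int_{\D}|\bar\partial g_{a,n}(z)|^{2}\,e^{-\Psi_{a,n}(z)}\,\tau(z)^{4}\,dA(z)\lesssim \tau(a)^{2}.
$$
Setting $F_{a,n}:=C_{a}\,(g_{a,n}-u_{a,n})$ with an absolute normalization, one applies the subharmonic mean-value inequality (Lemma \ref{lem:subHarmP}) on discs of radius $\delta\tau(z)$ to pass from the $L^2$ bound on $u_{a,n}$ to a pointwise estimate; on the peak disc the correction $u_{a,n}$ is much smaller than $g_{a,n}$, giving \eqref{BL1}, and away from $a$ the weight $\Psi_{a,n}$ forces the decay $|F_{a,n}(z)|\omega(z)^{1/2}\lesssim (\tau(a)/|z-a|)^{3n}$. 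The symmetric version with $\min(\tau(a),\tau(z))$ in \eqref{BL2} follows from the Lipschitz property \eqref{e:B} applied to interchange the roles of $a$ and $z$ when $\tau(z)<\tau(a)$.

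Finally, to obtain the norm equivalences in (a) and (b), I would split $\D$ into the peak disc $D_{0}=\{|z-a|<\tau(a)\}$ and the dyadic annuli $A_{k}=\{2^{k-1}\tau(a)\le |z-a|<2^{k}\tau(a)\}$, $k\ge 1$. On $D_{0}$, \eqref{BL1} gives $\int_{D_{0}}|F_{a,n}|^{p}\omega^{p/2}dA\asymp \tau(a)^{2}$; on $A_{k}$, \eqref{BL2} and $|A_{k}|\lesssim 2^{2k}\tau(a)^{2}$ yield a contribution $\lesssim 2^{(2-3np)k}\tau(a)^{2}$, so the series converges for all $n\ge 1$, $p>0$ (here one uses $3n\ge 3>2/p$ only for small $p$, otherwise already $n=1$ suffices), giving $\|F_{a,n}\|_{A^{p}_{\omega}}\asymp \tau(a)^{2/p}$. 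Part (b) follows immediately by taking the supremum in \eqref{BL1}–\eqref{BL2}. The main technical obstacle is the Hörmander step: one must engineer the weight $\Psi_{a,n}$ so that it both preserves the lower bound $\Delta\Psi_{a,n}\gtrsim\tau^{-2}$ needed for solvability and produces the precise polynomial decay $3n$; the interplay between the slowly varying $\tau$ (controlled by \eqref{e:A}–\eqref{e:B}) and the logarithmic singularity at $a$ is the delicate point.
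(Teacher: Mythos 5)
First, note that the paper does not prove this lemma: it is imported verbatim (up to the normalization $\omega^{p/2}$) from \cite{PP1} and \cite[Lemma 3.3]{BDK}, so the only meaningful comparison is with those sources. Your overall scheme --- a local holomorphic substitute for $e^{\varphi}$ obtained from the Riesz/harmonic decomposition of $\varphi$ on $D_\delta(a)$, a smooth cutoff, a H\"ormander $\bar\partial$-correction with a logarithmically modified weight, and dyadic annuli for the norm estimates --- is indeed the strategy of those references. However, there is a genuine gap at the crux. As written, your H\"ormander weight $\Psi_{a,n}=2\varphi+6n\log\bigl(1+|z-a|/\tau(a)\bigr)+2\log(1+|z|^2)$ has the logarithmic term with the wrong sign: $e^{-\Psi_{a,n}}$ is \emph{small} far from $a$, so finiteness of $\int|u_{a,n}|^2e^{-\Psi_{a,n}}$ only prevents $|u_{a,n}|e^{-\varphi}$ from growing faster than $(|z-a|/\tau(a))^{3n}$; it does not yield the decay $(\tau(a)/|z-a|)^{3n}$ required for \eqref{BL2}. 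To force decay one must \emph{subtract} the logarithmic term, and then $\Delta\Psi_{a,n}=2\Delta\varphi-6n\,\Delta\log(1+|z-a|/\tau(a))$ with $\Delta\log(1+|z-a|/\tau(a))\asymp(\tau(a)\,|z-a|)^{-1}$ near $a$, which destroys the lower bound $\Delta\Psi_{a,n}\gtrsim\tau^{-2}$ exactly where solvability is needed. Reconciling the decay exponent $3n$ with subharmonicity (e.g.\ by spreading the negative curvature of the log over a disc of radius comparable to $\sqrt{n}\,\tau(a)$ and exploiting the global bound $\Delta\varphi\asymp\tau^{-2}$ together with the extra regularity of $\tau$ built into the class $\mathcal{W}$) is precisely the content of the cited lemma; your proposal asserts ``whose Laplacian dominates $\Delta\varphi$'' for a weight that is subharmonic but proves the wrong estimate.

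Three further points. (i) The ``holomorphic Taylor polynomial of order $2$ with quadratic error'' step is not justified by $\varphi\in C^2$ alone, since the Taylor remainder is only $o(|z-a|^2)$ without uniformity; the correct device is the local decomposition $\varphi=h_a+G_{D_\delta(a)}[\Delta\varphi]$ with $h_a$ harmonic and the Green potential bounded because $\int_{D_\delta(a)}\log\frac{\delta\tau(a)}{|w-z|}\,\Delta\varphi(w)\,dA(w)\lesssim 1$. (ii) The symmetric numerator $\min(\tau(a),\tau(z))$ in \eqref{BL2} does \emph{not} follow from the Lipschitz property \eqref{e:B} by ``interchanging the roles of $a$ and $z$'': when $\tau(z)\ll\tau(a)$ the symmetric bound is strictly stronger than the one with $\tau(a)$, and it has to come out of the construction itself. (iii) In the norm estimate, $\sum_k 2^{(2-3np)k}$ converges only when $3np>2$, which fails for $n=1$ and small $p$; your parenthetical ``$3n\ge 3>2/p$ only for small $p$'' is inverted, and in fact the upper bound $\|F_{a,n}\|_{A^p_\omega}\lesssim\tau(a)^{2/p}$ cannot be extracted from \eqref{BL2} alone unless $np$ is large enough --- this restriction is present in the original statements in \cite{BDK,PP1}.
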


As a consequence, we have the following pointwise estimates for the derivative of the test functions $F_{a,n}$. Its proof is a simple application of \eqref{BL1}.

\begin{lem}\label{BL3}
Let $n\in\mathbb{N}\setminus\{0\}$ and $\omega\in \mathcal{W}$.
For any $\delta>0$ small enough,
\begin{equation}
|F'_{a,n}(z)|\,\omega(z)^{1/2}\asymp 1+\varphi'(z), \quad z \in D_{\delta}(a).
\end{equation}
\end{lem}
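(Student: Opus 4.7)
The plan is to establish the upper and lower estimates for $|F'_{a,n}(z)|\omega(z)^{1/2}$ on $D_\delta(a)$ separately, for $\delta>0$ sufficiently small and $|a|>\rho_0$ sufficiently close to $1$. Both \eqref{BL1} and the growth $\varphi'(a)\tau(a)\to\infty$ as $|a|\to 1^-$ (a consequence of $\Delta\varphi\asymp\tau^{-2}$, $\tau\to 0$, and the structure of $\mathcal{W}$) will play a crucial role.

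For the upper bound, I would apply Cauchy's integral formula for $F'_{a,n}(z)$ over the circle $|\zeta-z|=r$ with $r\asymp 1/(1+\varphi'(a))$. Since $\varphi'(a)\tau(a)$ is large, $r\ll\tau(a)$, so the integration disc sits well inside $\{|\zeta-a|<\tau(a)\}$. On that small disc $\varphi$ varies by $O(\varphi'(a)r)=O(1)$, hence $e^{\varphi(\zeta)}\asymp e^{\varphi(z)}$, and combining with \eqref{BL1} yields $|F_{a,n}(\zeta)|\lesssim e^{\varphi(z)}$ on the circle. Cauchy's estimate then gives $|F'_{a,n}(z)|\lesssim r^{-1}e^{\varphi(z)}\lesssim (1+\varphi'(a))e^{\varphi(z)}$, which, together with $\varphi'(z)\asymp\varphi'(a)$ on $D_\delta(a)$, delivers $|F'_{a,n}(z)|\omega(z)^{1/2}\lesssim 1+\varphi'(z)$.

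For the lower bound, \eqref{BL1} shows that $F_{a,n}$ is non-vanishing on $\{|z-a|<\tau(a)\}$, so there exists a holomorphic branch $\psi$ of $\log F_{a,n}$ on $D_\delta(a)$ satisfying $F'_{a,n}/F_{a,n}=\psi'$ and $\Re(\psi)=\log|F_{a,n}|$. The function $h:=\Re(\psi)-\varphi$ is uniformly bounded on $D_\delta(a)$ by \eqref{BL1}, while $\Delta h=-\Delta\varphi\asymp -\tau(a)^{-2}$ because $\Re(\psi)$ is harmonic. Standard interior gradient estimates on the disc $D_\delta(a)$ of radius $\delta\tau(a)$ (Poisson representation for the harmonic part plus a Newtonian potential for the $\Delta h$ source) then produce
$$|\nabla h(z)|\lesssim \frac{\|h\|_\infty}{\tau(a)}+\|\Delta h\|_\infty\,\tau(a)\lesssim \frac{1}{\tau(a)}$$
on a slightly smaller concentric disc. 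Because $\varphi$ is radial, $|\nabla\varphi(z)|=\varphi'(|z|)\asymp\varphi'(a)$ on $D_\delta(a)$, and since $|\psi'(z)|=|\nabla\Re\psi(z)|$,
$$|\psi'(z)|\ge|\nabla\varphi(z)|-|\nabla h(z)|\ge c_1\varphi'(a)-c_2/\tau(a)\gtrsim\varphi'(a)$$
once $\rho_0$ is close enough to $1$ that $\varphi'(a)\tau(a)\ge 2c_2/c_1$. Multiplying by $|F_{a,n}(z)|\omega(z)^{1/2}\asymp 1$ and using $1+\varphi'(z)\asymp\varphi'(z)$ near the boundary yields $|F'_{a,n}(z)|\omega(z)^{1/2}\gtrsim 1+\varphi'(z)$.

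The principal obstacle is the lower bound: converting the $L^\infty$ and Laplacian control of $h$ into a gradient estimate sharp enough to be absorbed by the main term $\varphi'(a)$. The enabling observation is that for $\omega\in\mathcal{W}$ the quantity $\varphi'(a)\tau(a)$ is unbounded as $|a|\to 1^-$, so the error term $1/\tau(a)$ is strictly lower order than $\varphi'(a)$; without that separation the subtraction above could not be carried out.
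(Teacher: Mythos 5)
Your proposal is correct, but it is worth recording that the paper offers no actual argument here: it merely asserts that the lemma is ``a simple application of \eqref{BL1}.'' Your proof supplies the missing content, and in particular the lower bound genuinely requires more than \eqref{BL1}, since a bound on $|F_{a,n}|$ alone cannot prevent $F'_{a,n}$ from vanishing inside $D_\delta(a)$; your device of writing $\log|F_{a,n}|=\varphi+h$ with $h$ bounded, estimating $|\nabla h|\lesssim \tau(a)^{-1}$ by the scaled interior estimate (harmonic part plus Newtonian potential of $\Delta h=-\Delta\varphi\asymp-\tau(a)^{-2}$), and then using $|F'_{a,n}/F_{a,n}|=|\nabla\log|F_{a,n}||\ge|\nabla\varphi|-|\nabla h|$ is exactly the right way to absorb the error. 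The Cauchy-estimate upper bound on circles of radius $\asymp(1+\varphi'(a))^{-1}$ is likewise sound. Two inputs beyond \eqref{BL1} are load-bearing and should be made explicit with references: (i) $\varphi'(\zeta)\asymp\varphi'(a)$ on $D_\delta(a)$ (and on the slightly enlarged disc used in the Cauchy estimate), which the paper already quotes from \cite[Lemma 32(d)]{CoPe}; and (ii) $\varphi'(a)\tau(a)\to\infty$ as $|a|\to1^-$, which you correctly identify as the enabling separation of scales --- it follows from $r\varphi'(r)=\int_0^r s\Delta\varphi(s)\,ds$, $\Delta\varphi\asymp\tau^{-2}$ and $\tau'(r)\to0$ (so that $\int^r\tau^{-2}\gg\tau(r)^{-1}$), and is available in \cite{PP1,CoPe}. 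With those two facts cited, your argument is a complete and correct proof, and arguably the one the paper should have given.
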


The next Proposition is a partial result about the atomic decomposition on $A^p_\om$ and its proof follows easily from Lemma \ref{Borichevlemma}.

\begin{prop}\label{At1-pp}
Let $n\geq 2$  and $\omega \in \mathcal{W}.$ Let  $\{z_k\}_{k\in\mathbb{N}}\subset\D$ be the sequence  defined in Lemma \ref{lem:Lcoverings}.
\begin{enumerate}
\item[(a)] For  $0 < p < \infty,$ the function given by
$$ F(z): = \sum_{\substack{k = 0}}^{\infty}\lambda_k \,\,\frac{F_{z_k,n}(z)}{\tau(z_k)^{2/p}}$$ belongs to $A^p_\om$ for every sequence $\lambda=\lbrace\lambda_k\rbrace\in \ell^{p}$ . Moreover,$$\|F\|_{A^p_\om}\lesssim \|\lambda \|_{\ell^p}.$$
\item[(b)] For $p=\infty,$ the function given by
 $$ F(z): = \sum_{\substack{k = 0}}^{\infty}\lambda_k \,F_{z_k,n}(z)$$ belongs to $A^\infty_\om$ for every sequence $\lambda=\lbrace\lambda_k\rbrace\in \ell^{\infty}$ . Moreover,$$\|F\|_{A^\infty_{\omega}}\lesssim \|\lambda \|_{\ell^{\infty}}.$$
\end{enumerate}
\end{prop}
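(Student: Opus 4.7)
The scheme is to exploit the strong pointwise decay of $F_{z_k,n}$ recorded in Lemma~\ref{Borichevlemma}. Writing $h_k(z):=\min\!\bigl(1,\tau(z_k)/|z-z_k|\bigr)^{3n}$, estimate~\eqref{BL2} says
$$
|F_{z_k,n}(z)|\,\omega(z)^{1/2}\lesssim h_k(z),\qquad z\in\D,
$$
while Lemma~\ref{Borichevlemma}(a) gives $\int_\D|F_{z_k,n}|^p\omega^{p/2}\,dA\asymp\tau(z_k)^2$. The main auxiliary fact I need is the uniform summability
$$
\sup_{z\in\D}\sum_k h_k(z)\lesssim 1\qquad\text{and}\qquad\int_\D h_k(z)\,dA(z)\lesssim\tau(z_k)^2,
$$
both of which follow from the disjointness and finite multiplicity of the lattice discs $D_\delta(z_k)$ in Lemma~\ref{lem:Lcoverings} together with the regularity $\tau(\xi)\asymp\tau(z_k)$ on $D_\delta(z_k)$, once $3n$ is large enough---the hypothesis $n\ge 2$ giving ample room.

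For part~(a) with $0<p\le 1$, the quasi-triangle inequality $|\sum a_k|^p\le\sum|a_k|^p$, followed by Fubini and the norm identity $\|F_{z_k,n}\|_{A^p_\om}^p\asymp\tau(z_k)^2$, yields
$$
\int_\D|F(z)|^p\,\omega(z)^{p/2}\,dA(z)\le\sum_k\frac{|\lambda_k|^p}{\tau(z_k)^2}\,\|F_{z_k,n}\|_{A^p_\om}^p\lesssim\|\lambda\|_{\ell^p}^p,
$$
and the same estimate applied to tail sums shows the partial sums are Cauchy in $A^p_\om$, so the series converges to $F\in A^p_\om$ with the stated bound. For $1<p<\infty$, subadditivity fails and I would apply H\"older's inequality in the weighted form
$$
\Bigl(\sum_k x_k y_k\Bigr)^p\le\Bigl(\sum_k x_k^p y_k\Bigr)\Bigl(\sum_k y_k\Bigr)^{p-1}\qquad (y_k\ge 0)
$$
with $x_k=|\lambda_k|\,\tau(z_k)^{-2/p}$ and $y_k=h_k(z)$. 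Combined with the pointwise estimate for $|F_{z_k,n}|$ and the uniform summability $\sum_k h_k(z)\lesssim 1$, this gives
$$
|F(z)|^p\,\omega(z)^{p/2}\lesssim\sum_k\frac{|\lambda_k|^p}{\tau(z_k)^2}\,h_k(z),
$$
and integrating against $dA$ together with $\int h_k\,dA\lesssim\tau(z_k)^2$ delivers $\|F\|_{A^p_\om}^p\lesssim\|\lambda\|_{\ell^p}^p$, upgraded to genuine convergence of the series by the Cauchy argument on tails.

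Part~(b) is the most transparent case: pointwise,
$$
|F(z)|\,\omega(z)^{1/2}\le\|\lambda\|_{\ell^\infty}\sum_k |F_{z_k,n}(z)|\,\omega(z)^{1/2}\lesssim\|\lambda\|_{\ell^\infty}\sum_k h_k(z)\lesssim\|\lambda\|_{\ell^\infty},
$$
which simultaneously guarantees absolute convergence of the defining series locally uniformly on $\D$ (hence analyticity of $F$) and the norm bound $\|F\|_{A^\infty_\om}\lesssim\|\lambda\|_{\ell^\infty}$. The principal technical hurdle throughout is the uniform sum estimate $\sup_z\sum_k h_k(z)\lesssim 1$: its proof rests on the packing and covering properties of the $(\delta,\tau)$-lattice of Lemma~\ref{lem:Lcoverings}, with the exponent $3n$ needing to be large enough relative to the $\tau$-decay, which is precisely the role of the hypothesis $n\ge 2$.
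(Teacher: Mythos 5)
Your overall scheme (pointwise decay of the test functions, a uniform summability bound, the $p$-triangle inequality for $p\le 1$ and a weighted H\"older/Jensen step for $p>1$) is a reasonable route, and the pieces $\int_\D h_k\,dA\lesssim\tau(z_k)^2$ and the $0<p\le 1$ case of (a) are fine. But the linchpin of your argument, the uniform bound $\sup_{z\in\D}\sum_k h_k(z)\lesssim 1$ with $h_k(z)=\min\bigl(1,\tau(z_k)/|z-z_k|\bigr)^{3n}$, is exactly where the real work lies, and as justified it has a genuine gap. By replacing the $\min\bigl(\tau(z_k),\tau(z)\bigr)$ of \eqref{BL2} with the larger quantity $\tau(z_k)$ you have discarded the very feature that makes the sum controllable. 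The dangerous terms are the lattice points with $\tau(z_k)\gg\tau(z)$, i.e.\ points much deeper inside the disk than $z$: for such $k$ the ratio $\tau(z_k)/|z-z_k|$ need not be small even when $|z-z_k|\gg\tau(z)$ (since $\tau(z_k)$ can grow essentially linearly in the distance to $z$), so raising the minimum to the power $3n$ buys nothing --- the hypothesis $n\ge 2$ gives no ``room'' here. The ingredients you invoke (disjointness, finite multiplicity, $\tau\asymp\tau(z_k)$ on $D_\delta(z_k)$) only bound the number of such near-unit terms per dyadic annulus $R_j(z)$ by a constant, which yields $\sum_k h_k(z)\lesssim\log(1/\tau(z))$, not $O(1)$; closing the gap with your $h_k$ would force you to use finer properties of the class $\mathcal{W}$ (essentially $\tau(r)=o(1-r)$, coming from $\tau'(r)\to 0$), none of which appear in your argument.

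The repair is to keep the minimum supplied by \eqref{BL2}, i.e.\ work with $\tilde h_k(z)=\min\bigl(1,\min(\tau(z_k),\tau(z))/|z-z_k|\bigr)^{3n}$. Then for $z_k\notin D_\delta(z)$ one can write $\min\bigl(\tau(z_k),\tau(z)\bigr)^{3n}\le\tau(z)^{3n-2}\,\tau(z_k)^{2}$, and the packing estimate $\sum_{z_k\in R_j(z)}\tau(z_k)^{2}\lesssim 2^{2j}\tau(z)^{2}$ on the dyadic rings $R_j(z)$ turns the ring sums into a convergent geometric series:
$$
\sum_{z_k\notin D_\delta(z)}\tilde h_k(z)\lesssim\tau(z)^{3n-2}\sum_{j\ge 0}\frac{2^{2j}\tau(z)^{2}}{\bigl(2^{j}\delta\tau(z)\bigr)^{3n}}\lesssim_\delta 1 ,
$$
while the near sum has at most $N$ terms. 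This is, up to a Cauchy--Schwarz detour, precisely how the paper proves part (b); your two auxiliary facts both hold for $\tilde h_k$, so the rest of your argument (including the weighted H\"older step for $1<p<\infty$, which the paper does not carry out, deferring (a) to \cite{PP1}) then goes through.
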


\begin{proof}
The proof of $(a)$ can be found in \cite[Proposition 2]{PP1}. To prove $(b)$, estimate the norm of $F$ as follows
\begin{align*}
	\|F\|_{A^\infty_{\omega}} &=\sup_{z\in \D}\,|F(z)|\omega(z)^{1/2}\\
	&\lesssim \|\lambda\|_{\ell^\infty}\sum_{\substack{k = 0}}^{\infty}  |F_{z_k,n}(z)| \omega(z)^{1/2}\\
	&=\|\lambda\|_{\ell^\infty}\left(\sum_{\substack{z_k \in D_{\delta}(z)}}  |F_{z_k,n}(z)| \omega(z)^{1/2}+ \sum_{\substack{z_k \notin D_{\delta}(z)}}  |F_{z_k,n}(z)| \omega(z)^{1/2}\right)
\end{align*}
Now, using \eqref{BL1} and (iv) of Lemma \ref{lem:Lcoverings}, we have 
\begin{equation}\label{eq-sum1}
\begin{split}
\sum_{\substack{z_k \in D_{\delta}(z)}}  |F_{z_k,n}(z)| \omega(z)^{1/2}
&\lesssim 1.
\end{split}
\end{equation}
It remains to show that 
$$
	\sum_{\substack{z_k \notin D_{\delta}(z)}}
	|F_{z_k,n}(z)| \omega(z)^{1/2} \lesssim 1.
$$
Indeed, by  H\"older's inequality, we have 
\begin{equation}\label{eq-sum2}
	\sum_{\substack{z_k \notin D_{\delta}(z)}}  |F_{z_k,n}(z)| \omega(z)^{1/2}\le I(z)\cdot II(z),
\end{equation}
where
$$
	I(z)=\sum_{\substack{z_k \notin D_{\delta}(z)}} \min\big(\tau(z_k),\tau(z)\big)^{2} |F_{z_k,n}(z)| \omega(z)^{1/2},
$$
and 
$$
	II(z)=\sum_{\substack{z_k \notin D_{\delta}(z)}}\frac{|F_{z_k,n}(z)| \omega(z)^{1/2}}{\min\big(\tau(z_k),\tau(z)\big)^{2}}.
$$
First we look for the upper bound of $I(z).$  To do this, we need to consider  the covering of $\displaystyle\lbrace \xi\in\mathbb{D} : |z-\xi| > \delta \tau(z)\rbrace$ given by 
$$
	R_j(z) =\displaystyle\lbrace\xi\in\mathbb{D} : 2^{j}\delta\tau(z) < |z-\xi| \leq 2^{j+1}\delta\tau(z)\rbrace,\qquad j=0,1,2,\dots
$$
and observe that, using $(A)$ of properties of $\tau$, it is easy to see that, for  $j=0,1,2,\dots,$
$$
	D_{\delta}(z_k)\subset D_{r}(z), \,  \,  \textrm{  if } \,  \,  z_k \in D_{t}(z)\,  \,  \textrm{with} \,  \,      r= 5\delta2^{j} \, \,    \textrm{and} \,\, t= \delta2^{j+1} .
$$
This fact together with the finite multiplicity of the covering (see Lemma \ref{lem:Lcoverings}) gives
\begin{equation*}\label{eq-sum3}
\begin{split}
\sum_{\substack{z_k \in R_j(z)}}\tau(z_k)^{2}\lesssim A(D_{r}(z))\lesssim2^{2j}\tau(z)^{2}.
\end{split}
\end{equation*}
Therefore, by \eqref{BL2}, we have 
\begin{equation}\label{eq-sum4}
\begin{split}
I(z)&\le\sum_{\substack{z_k \notin D_{\delta}(z)}} \tau(z_k)^{2} |F_{z_k,n}(z)| \omega(z)^{1/2}\\
&\lesssim\sum_{\substack{z_k \notin D_{\delta}(z)}} \tau(z_k)^{2}\min \left
(1,\frac{\min\big(\tau(z_k),\tau(z)\big)}{|z-z_k|} \right )^{3n}\\
&\le \tau(z)^{3n}\sum_{j=0}^{\infty}\sum_{\substack{z_k \in R_j(z)}}\frac{ \tau(z_k)^{2}}{|z-z_k|^{3n}}\\
&\lesssim  \sum_{j=0}^{\infty}2^{-3nj}\sum_{\substack{z_k \in R_j(z)}} \tau(z_k)^{2}\\
&\lesssim \tau(z)^{2} \sum_{j=0}^{\infty}2^{(2-3n)j}\lesssim\tau(z)^{2}.
\end{split}
\end{equation}
To obtain an upper estimate for $(II)$, notice that since $n\geq 2$, \eqref{BL2} implies that 
\begin{equation*}\label{eq-sum5}
\begin{split}
II(z)
&\lesssim\sum_{\substack{z_k \notin D_{\delta}(z)}} 
\frac{\min\big(\tau(z_k),\tau(z)\big)^{3n-2}}{|z-z_k|^{3n}} \\
&\le \tau(z)^{3n-4}\sum_{j=0}^{\infty}\sum_{\substack{z_k \in R_j(z)}}\frac{ \tau(z_k)^{2}}{|z-z_k|^{3n}}\\
&\lesssim \tau(z)^{-4} \sum_{j=0}^{\infty}2^{-3nj}\sum_{\substack{z_k \in R_j(z)}} \tau(z_k)^{2}\\
&\lesssim \tau(z)^{-2} \sum_{j=0}^{\infty}2^{(2-3n)j}\lesssim\tau(z)^{-2}.
\end{split}
\end{equation*}
Combining this and \eqref{eq-sum4} with \eqref{eq-sum2} completes the proof.\end{proof}

\section{Geometric Characterizations of  Carleson measures}\label{carleson}
Let $\mu$ be  a positive measure on $\D.$ Denote by $\widehat{\mu_{\delta}}$  the averaging function defined as
$$\widehat{\mu_{\delta}}(z)= \mu(D_{\delta}(z))\cdot\tau(z)^{-2}, \quad z\in \D,$$ 
and define the general Berezin transform of $\mu$ by 
$$
	G_{t}(\mu)(z) = \int_{\D} |k_{t,z}(\zeta)|^t\, \om(\zeta)^{t/2}\, d\mu(\zeta),
$$
for every $t>0$ and $z\in \D.$

In this section we recall recent characterizations of $q$-Carleson measures for $A^p_{\om}$ for any $0< p, q \le \infty$ in terms of the averaging function $\widehat{\mu_{\delta}}$ and the general Berezin transform $G_t(\mu)$.  For the proofs of all theorems in this section, see Section 3 of \cite{Hi-w}.

\subsection{Carleson measures }\label{sec3:1}
We begin with the definition of $q$-Carleson measures.

\begin{defn}
Let $\mu$ be a positive measure on $\D$ and fix  $0< p,q< \infty.$ We say  that  $\mu$ is  a $q$-Carleson measure for $A^p_{\om}$  if the inclusion $I_{\mu}:A^p_{\om}\longrightarrow L^q_\om$ is bounded.
\end{defn}

The following theorem characterizes the $q$-Carleson measures when $0 < p\le q<\infty$.

\begin{otherth}\label{thm:CMPQ}
Let $\mu$  be a finite positive Borel measure on $\D$. Assume $0<p \le q<\infty,$ $s=p/q,$ $0 < t< \infty.$ The following conditions are  equivalent:
\begin{enumerate}
\item[(a)]  The measure $\mu$ is a $q$-Carleson measure for $A^p_\om$.

\item[(b)]  The function  $$ \tau(z)^{2(1-1/s)}G_{t}(\mu)(z)$$ belongs to   $L^{\infty}(\D, dA)$.
 
\item[(c)] The function  $$\tau(z)^{2(1-1/s)}\widehat{\mu_{\delta}}(z)$$ belongs to $L^{\infty}(\D, dA)$ for any sufficiently small $\delta>0$.
\end{enumerate}
\end{otherth}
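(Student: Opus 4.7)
The plan is to prove the chain $(a)\Rightarrow(b)\Rightarrow(c)\Rightarrow(a)$, resting on the normalized reproducing kernel estimates of Lemma~\ref{lem:RK-PE1}, the subharmonic-type inequality~\eqref{Eq-gamma}, and the $(\delta,\tau)$-lattice from Lemma~\ref{lem:Lcoverings}. Since (c) is independent of $t$, once the chain closes for some fixed $t$, the validity of (b) for other values of $t$ has to be supplemented separately. For $(a)\Rightarrow(b)$, I would test the hypothesis that $I_\mu : A^p_\om \to L^q_\om$ is bounded on the functions $k_{p,z}$, which have unit $A^p_\om$-norm, giving $\int_\D |k_{p,z}|^q \om^{q/2}\, d\mu \lesssim 1$. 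Applying Lemma~\ref{lem:RK-PE1}(a) in the form $|k_{p,z}(\zeta)|^q \asymp \tau(z)^{2(1-1/s)} |k_{q,z}(\zeta)|^q$ (using $1-q/p = 1-1/s$) then yields $\tau(z)^{2(1-1/s)} G_q(\mu)(z) \lesssim 1$, which is (b) with $t=q$.

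For $(b)\Rightarrow(c)$, I would restrict the integral defining $G_t(\mu)(z)$ to $D_\delta(z)$ and invoke Lemma~\ref{lem:RK-PE1}(c) to obtain $|k_{t,z}(\zeta)|^t \om(\zeta)^{t/2} \asymp \tau(z)^{-2}$ uniformly on that disk, yielding
\[
	G_t(\mu)(z) \gtrsim \tau(z)^{-2}\mu(D_\delta(z)) = \widehat{\mu_\delta}(z),
\]
so (c) follows by multiplying through by $\tau(z)^{2(1-1/s)}$. For $(c)\Rightarrow(a)$, I would fix $f\in A^p_\om$ and a lattice $\{z_n\}$, then decompose
\[
	\int_\D |f|^q \om^{q/2}\, d\mu \le \sum_n \mu(D_\delta(z_n))\sup_{D_\delta(z_n)}\bigl(|f|^q \om^{q/2}\bigr).
\]
The subharmonic bound~\eqref{Eq-gamma} controls $\sup_{D_\delta(z_n)}(|f|^p \om^{p/2})$ by $\tau(z_n)^{-2}\int_{\tilde{D}_\delta(z_n)} |f|^p \om^{p/2}\, dA$. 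Raising to the power $q/p \ge 1$, substituting (c) in the form $\mu(D_\delta(z_n)) \lesssim \tau(z_n)^{2q/p}$, and applying the elementary inequality $\sum_n a_n^{q/p} \le (\sum_n a_n)^{q/p}$ together with the finite multiplicity of the cover gives $\|I_\mu f\|_{L^q_\om}^q \lesssim \|f\|_{A^p_\om}^q$.

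The main obstacle is extending (b) from $t=q$ to arbitrary $t>0$, since $G_t$ for $t\notin\{p,q\}$ does not arise from testing $I_\mu$ on any standard family. I would close this gap by proving $(c)\Rightarrow(b)$ directly for every $t$: decompose $G_t(\mu)(z)$ over the lattice, apply the subharmonic estimate to $|k_{t,z}|^t \om^{t/2}$ on each $D_\delta(z_n)$ to replace the $d\mu$-integral by an area average, use (c) to extract the factor $\tau(z_n)^{-2(1-1/s)}$, and finally bound
\[
	\int_\D \tau(\zeta)^{-2(1-1/s)} |K_z(\zeta)|^t \om(\zeta)^{t/2}\, dA
\]
by Lemma~\ref{nEstim} with $\beta = -2(1-1/s)$, combined with the kernel norm from Theorem~\ref{RK-PE}, to deliver the target bound $G_t(\mu)(z) \lesssim \tau(z)^{-2(1-1/s)}$.
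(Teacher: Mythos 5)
The paper itself contains no proof of Theorem~B: the text states that the proofs of all results in Section~\ref{carleson} are to be found in Section~3 of \cite{Hi-w}, so there is no in-paper argument to compare against. Judged on its own terms, your outline is correct and logically complete. The cycle $(a)\Rightarrow(b)\big|_{t=q}\Rightarrow(c)\Rightarrow(a)$ is the standard route: testing $I_\mu$ on the unit-norm functions $k_{p,z}$ and converting $|k_{p,z}|^q$ into $\tau(z)^{2(1-q/p)}|k_{q,z}|^q$ via Lemma~\ref{lem:RK-PE1}(a) gives (b) for $t=q$; the on-diagonal estimate \eqref{eqn:RK-Diag1} gives $(b)\Rightarrow(c)$; and the lattice of Lemma~\ref{lem:Lcoverings} combined with \eqref{Eq-gamma}, the bound $\mu(D_\delta(z_n))\lesssim\tau(z_n)^{2q/p}$ from (c), the inequality $\sum_n a_n^{q/p}\le(\sum_n a_n)^{q/p}$, and the finite multiplicity of $\{D_{3\delta}(z_n)\}$ gives $(c)\Rightarrow(a)$. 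You also correctly identified the one point that is easy to overlook: (b) is asserted for every $t>0$ while testing only reaches $t=q$. Your repair closes the gap: decomposing $G_t(\mu)(z)$ over the lattice, replacing the $d\mu$-integral on each $D_\delta(z_n)$ by an area average via Lemma~\ref{lem:subHarmP}, inserting $\widehat{\mu_\delta}(z_n)\lesssim\tau(z_n)^{2(q/p-1)}$, and evaluating
\[
\int_{\D}\tau(\zeta)^{2(q/p-1)}\,|k_{t,z}(\zeta)|^t\,\om(\zeta)^{t/2}\,dA(\zeta)
\asymp\frac{\om(z)^{-t/2}\,\tau(z)^{2(1-t)+2(q/p-1)}}{\|K_z\|_{A^t_\om}^t}
\asymp\tau(z)^{2(q/p-1)}
\]
by Lemma~\ref{nEstim} and \eqref{Eq-NE} yields exactly $G_t(\mu)(z)\lesssim\tau(z)^{-2(1-1/s)}$, as required.
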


Now we characterize $q$-Carleson measures when $0< q<p<\infty.$  

\begin{otherth}\label{thm:CMOP}
 Let $\mu$   be a finite positive Borel measure on $\D.$ Assume $0< q < p<\infty$  and $s=p/q.$  The following conditions are  all equivalent:
\begin{enumerate}
\item[(a)]  The measure $\mu$ is a $q$-Carleson measure for $A^p_\om$.

\item[(b)]   For any (or some ) $r>0$, we have 
$$\widehat{\mu_{r}} \in L^{p/(p-q)}(\D, dA).$$ 

\item[(c)] For any $t>0$, $$G_{t}(\mu)\in L^{p/(p-q)}(\D, dA)$$.
\end{enumerate}
\end{otherth}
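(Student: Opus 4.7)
I would prove $(a)\Leftrightarrow(b)$ and $(b)\Leftrightarrow(c)$ separately. The easier direction $(b)\Rightarrow(a)$ rests on the subharmonic mean value estimate of Lemma \ref{lem:subHarmP}: for $f\in A^p_\om$,
\[
|f(\zeta)|^q\,\om(\zeta)^{q/2} \lesssim \frac{1}{\tau(\zeta)^2}\int_{D_\delta(\zeta)}|f(z)|^q\,\om(z)^{q/2}\,dA(z).
\]
Integrating against $d\mu(\zeta)$, swapping the order of integration via Fubini, and using $\tau(\zeta)\asymp\tau(z)$ on $D_\delta(\zeta)$ produces
\[
\int_\D |f|^q\,\om^{q/2}\,d\mu \lesssim \int_\D |f(z)|^q\,\om(z)^{q/2}\,\widehat{\mu_\delta}(z)\,dA(z).
\]
H\"older's inequality with the dual exponents $p/q$ and $p/(p-q)$ then bounds the right-hand side by $\|f\|_{A^p_\om}^q\,\|\widehat{\mu_\delta}\|_{L^{p/(p-q)}(\D,dA)}$, which gives the $q$-Carleson property.

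For $(a)\Rightarrow(b)$ I would combine the atomic-type sums of Proposition \ref{At1-pp}(a) with Khinchin's inequality. Fix a $(\delta,\tau)$-lattice $\{z_k\}$ from Lemma \ref{lem:Lcoverings}, a sequence $\{\lambda_k\}\in\ell^p$, and $s\in[0,1]$, and set
\[
F_s(z) = \sum_k \lambda_k\,r_k(s)\,\frac{F_{z_k,n}(z)}{\tau(z_k)^{2/p}},
\]
where the $r_k$ are the Rademacher functions and the $F_{z_k,n}$ come from Lemma \ref{Borichevlemma}. Proposition \ref{At1-pp}(a) gives $\|F_s\|_{A^p_\om}\lesssim\|\lambda\|_{\ell^p}$ uniformly in $s$, so $(a)$ together with Fubini yields
\[
\int_\D\!\left(\int_0^1 |F_s(z)|^q\,ds\right)\!\om(z)^{q/2}\,d\mu(z) \lesssim \|\lambda\|_{\ell^p}^q.
\]
Khinchin rewrites the inner integral as $\bigl(\sum_k |\lambda_k|^2|F_{z_k,n}(z)|^2\tau(z_k)^{-4/p}\bigr)^{q/2}$; restricting to $z\in D_\delta(z_k)$, keeping only the $k$-th summand, and applying $|F_{z_k,n}(z)|\om(z)^{1/2}\asymp 1$ on that disk (after summing over $k$ and absorbing the finite multiplicity from Lemma \ref{lem:Lcoverings}) leads to
\[
\sum_k |\lambda_k|^q\,\frac{\mu(D_\delta(z_k))}{\tau(z_k)^{2q/p}} \lesssim \|\lambda\|_{\ell^p}^q.
\]
Duality in $\ell^{p/q}$, whose conjugate index is $p/(p-q)$, converts this into the discrete summability $\sum_k \widehat{\mu_\delta}(z_k)^{p/(p-q)}\tau(z_k)^2 < \infty$, and a standard covering argument that exploits $\tau(z)\asymp\tau(z_k)$ and $D_r(z)\subset D_{r+\delta}(z_k)$ for $z\in D_\delta(z_k)$ transfers the discrete bound into $\widehat{\mu_r}\in L^{p/(p-q)}(\D,dA)$.

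For $(b)\Leftrightarrow(c)$ I would compare $G_t(\mu)$ and $\widehat{\mu_\delta}$ pointwise. Restricting the defining integral of $G_t(\mu)(z)$ to $\zeta\in D_\delta(z)$ and invoking Lemma \ref{lem:RK-PE1}(c) yields $G_t(\mu)(z)\gtrsim\widehat{\mu_\delta}(z)$, so $(c)\Rightarrow(b)$ is immediate. For the opposite inequality in $L^{p/(p-q)}(\D,dA)$, I would decompose $\D$ into dyadic annular regions around $z$, use the off-diagonal size of $K_z$ extracted from Theorem \ref{RK-PE} and Lemma \ref{nEstim} to dominate each annular contribution by a translate of a $\widehat{\mu_{C\delta}}$-type average, and then apply Minkowski's integral inequality to pass to norms.

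The main obstacle is the step $(a)\Rightarrow(b)$. The interplay of Khinchin's inequality with the open duality $\ell^{p/q}$/$\ell^{p/(p-q)}$ has to be meshed carefully with the geometric control of $\{D_\delta(z_k)\}$: the decay exponent $n$ in Lemma \ref{Borichevlemma} must be chosen large enough for every sum and exchange of integration to be absolutely convergent, the non-disjointness of the cover forces finite-multiplicity bookkeeping, and the transition from discrete summability at the lattice points to continuous $L^{p/(p-q)}$ integrability of $\widehat{\mu_r}$ must respect the Lipschitz comparability of $\tau$ on those disks.
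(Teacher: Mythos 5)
The paper itself does not prove Theorem~\ref{thm:CMOP}: it is imported wholesale from Section~3 of \cite{Hi-w}. Your outline, however, is essentially the same Luecking-type argument that the paper \emph{does} write out in full for the parallel embedding of $S^p_\om$ into $L^q(\D,d\mu)$ (Lemma~\ref{lem2}): subharmonicity plus H\"older's inequality with exponents $p/q$ and $p/(p-q)$ for the sufficiency of the averaging condition, and lattice test functions from Lemma~\ref{Borichevlemma} combined with Khinchine's inequality, Proposition~\ref{At1-pp}, and $\ell^{p/q}$--$\ell^{p/(p-q)}$ duality for the necessity. Both of those halves are sound as sketched; the exponent arithmetic checks out ($\mu(D_\delta(z_k))\tau(z_k)^{-2q/p}$ raised to $p/(p-q)$ is exactly $\widehat{\mu_\delta}(z_k)^{p/(p-q)}\tau(z_k)^2$), keeping a single summand of the $\ell^2$-sum raised to the power $q/2$ works for every $q>0$, and you correctly flag the bookkeeping (test functions exist only for $|z_k|\ge\rho_0$, finite multiplicity of the cover, passage from the discrete to the continuous averaging condition).

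The one genuinely thin spot is the implication $\widehat{\mu_\delta}\in L^{p/(p-q)}(\D,dA)\Rightarrow G_t(\mu)\in L^{p/(p-q)}(\D,dA)$. Neither Theorem~\ref{RK-PE} nor Lemma~\ref{nEstim} supplies the pointwise off-diagonal decay of $K_z$ that your dyadic-annulus decomposition would require, and Minkowski's integral inequality alone only yields an $L^1\to L^{s}$ bound for the resulting integral operator, not the $L^{s}\to L^{s}$ bound you need with $s=p/(p-q)$. The standard repair uses only tools already quoted: cover $\D$ by the lattice disks of Lemma~\ref{lem:Lcoverings}, apply Lemma~\ref{lem:subHarmP} to $K_z$ on each $D_\delta(z_k)$ to get
$G_t(\mu)(z)\lesssim\int_\D h(\zeta)\,|k_{t,z}(\zeta)|^t\om(\zeta)^{t/2}\,dA(\zeta)$ with $h=\sum_k\widehat{\mu_{3\delta}}(z_k)\chi_{D_{3\delta}(z_k)}$, and then invoke Schur's test: one marginal $\int_\D|k_{t,z}(\zeta)|^t\om(\zeta)^{t/2}\,dA(\zeta)$ equals $1$ by normalization, and the other marginal is $\asymp1$ by the symmetry $|K_z(\zeta)|=|K_\zeta(z)|$ together with Lemma~\ref{nEstim} applied with $\beta=2(t-1)$, so the operator is bounded on every $L^{s}(\D,dA)$, $1\le s\le\infty$. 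With that substitution your proof closes the full cycle of equivalences.
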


\subsection{Vanishing Carleson measures}\label{sec3:2}

\begin{defn} Let $\mu$ be a positive measure on $\D$ and fix $0< p,q<  \infty.$ We say  that  $\mu$ is  a vanishing $q$-Carleson measure for $A^p_\om$ if the inclusion $I_{\mu}:A^p_{\om}\longrightarrow L^q_\om$ is compact, or equivalently, if $$\int_{\D}|f_n(z)|^q\,\om(z)^{q/2}\, d\mu(z)\to 0$$
whenever $f_n$ is bounded in $A^p_{\om}$  and  converges to zero uniformly on each compact subsets of $\D$.
\end{defn}

The following three theorems characterize vanishing $q$-Carleson measures for $A^p_\omega$ when $0<p\le \infty$ and $0<q<\infty$.

\begin{otherth}\label{thm:VCMPQ}
Given $\tau \in \mathcal{L^*},$  let $\mu$  be a finite positive Borel measure on $\D.$ Assume $0< p \le q<\infty,$ $s=p/q,$ $0< t< \infty.$ The following statements are  all equivalent:
\begin{enumerate}
\item[(a)]   $\mu$ is a  vanishing $q$-Carleson measure for $A^p_\om$.

\item[(b)]   $ \tau(z)^{2(1-1/s)}G_{t}(\mu)(z)\to 0 $ as $|z|\to 1^{-}$.

\item[(c)]  $\tau(z)^{2(1-1/s)}\widehat{\mu_{\delta}}(z)\to 0$ as $|z|\to 1^{-},$ for any small enough $\delta>0$. 
\end{enumerate}
\end{otherth}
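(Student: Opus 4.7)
The plan is to establish the cycle (a) $\Rightarrow$ (b) $\Rightarrow$ (c) $\Rightarrow$ (a), paralleling the non-vanishing analogue Theorem~\ref{thm:CMPQ} but with uniform bounds replaced by boundary-decay conditions. Since (c) is $t$-independent while (b) involves $t$, condition (c) will act as a bridge: (b) for one specific $t$ will be extracted from (a), and (b) for arbitrary $t$ will then be recovered from (c).

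For (a) $\Rightarrow$ (b) with $t = q$, I would take $f_z := k_{p,z}$ as the test family. These have unit norm in $A^p_\omega$ by construction and converge to zero uniformly on each compact subset $K \subset \D$ as $|z| \to 1^-$: the values $|K_z(\zeta)|$ for $\zeta \in K$ are controlled by the subharmonic estimate of Lemma~\ref{lem:subHarmP}, whereas $\|K_z\|_{A^p_\omega} \asymp \omega(z)^{-1/2}\tau(z)^{2(1-p)/p}$ blows up at the boundary by Theorem~\ref{RK-PE}. Plugging $f_z$ into the vanishing Carleson condition and invoking Lemma~\ref{lem:RK-PE1}(a) to rewrite $|k_{p,z}|^q \asymp \tau(z)^{2(1-q/p)} |k_{q,z}|^q$ yields $\tau(z)^{2(1-1/s)} G_q(\mu)(z) \to 0$, which is (b) at $t = q$ since $1 - 1/s = 1 - q/p$. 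For (b) $\Rightarrow$ (c), the pointwise lower bound $|k_{t,z}(\zeta)|^t \omega(\zeta)^{t/2} \asymp \tau(z)^{-2}$ on $D_\delta(z)$ from Lemma~\ref{lem:RK-PE1}(c) gives, after restricting the defining integral of $G_t(\mu)(z)$ to that disc, $G_t(\mu)(z) \gtrsim \tau(z)^{-2} \mu(D_\delta(z)) = \widehat{\mu_\delta}(z)$, so the decay in (b) transfers directly to (c).

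For (c) $\Rightarrow$ (a), I would use a cut-off argument. Given $\varepsilon > 0$, choose $r \in (0,1)$ via (c) so that $\tau(z)^{2(1-1/s)} \widehat{\mu_\delta}(z) < \varepsilon$ whenever $|z| > r$, with enough slack in $r$ to accommodate the radius of the averaging disc. Split $\mu = \mu_1 + \mu_2$ with $\mu_1 = \mu|_{\{|\zeta|\le r\}}$; then the non-vanishing Theorem~\ref{thm:CMPQ} applied to $\mu_2$ yields $\int_\D |f|^q \omega^{q/2}\, d\mu_2 \lesssim \varepsilon\, \|f\|_{A^p_\omega}^q$ for every $f \in A^p_\omega$. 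For a sequence $\{f_n\}$ bounded in $A^p_\omega$ and converging to zero uniformly on compact subsets, the $\mu_1$-integral tends to $0$ with $n$ since $\mu_1$ has compact support in $\D$, while the $\mu_2$-integral is controlled by $C\varepsilon \sup_n \|f_n\|_{A^p_\omega}^q$; taking first $n \to \infty$ and then $\varepsilon \to 0$ gives (a). The identical splitting, now using the (b)–(c) equivalence from Theorem~\ref{thm:CMPQ} with $G_t(\mu)$ in place of $\widehat{\mu_\delta}$, recovers (b) for arbitrary $t \in (0,\infty)$ and closes the cycle.

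The main obstacle lies in the splitting step of (c) $\Rightarrow$ (a): because the averaging discs $D_\delta(z)$ can straddle the circle $|\zeta| = r$, one must choose the cut-off with enough margin that $\widehat{(\mu_2)_\delta}(z)$ is pointwise dominated by $\widehat{\mu_\delta}(z)$ for $|z|$ near the boundary, so that the smallness of the Carleson norm of $\mu_2$ is genuinely inherited from (c). All remaining ingredients---the subharmonic pointwise estimates, the $(\delta,\tau)$-lattice covering, and the kernel asymptotics---are available from Section~\ref{preliminaries} and mirror the non-vanishing case.
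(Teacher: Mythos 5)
First, note that the paper does not actually prove Theorem~\ref{thm:VCMPQ}: it is quoted from Section~3 of \cite{Hi-w}, so there is no in-text proof to compare against. Judged on its own terms, your architecture ((a) $\Rightarrow$ (b) at one $t$ $\Rightarrow$ (c) $\Rightarrow$ (a), then (c) $\Rightarrow$ (b) for all $t$) is reasonable, and the steps (b) $\Rightarrow$ (c) via \eqref{eqn:RK-Diag1} and (c) $\Rightarrow$ (a) via the cut-off $\mu=\mu_1+\mu_2$ (including your careful handling of discs straddling $|\zeta|=r$, and the tacit but standard quantitative form of Theorem~\ref{thm:CMPQ}) are sound. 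The gaps are in the two places where you need the reproducing kernel \emph{off} the diagonal, and the paper only supplies diagonal information (\eqref{RK-Diag}, \eqref{eqn:RK-Diag1}).

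Concretely: (i) in (a) $\Rightarrow$ (b) you assert that $k_{p,z}\to 0$ uniformly on compacta, citing Lemma~\ref{lem:subHarmP} and the blow-up of $\|K_z\|_{A^p_\omega}$. But the best these give is $|K_z(\zeta)|=|K_\zeta(z)|\le\|K_\zeta\|_{A^2_\omega}\|K_z\|_{A^2_\omega}\asymp\|K_\zeta\|_{A^2_\omega}\,\omega(z)^{-1/2}$, whence $|k_{p,z}(\zeta)|\lesssim\|K_\zeta\|_{A^2_\omega}\,\tau(z)^{2(p-1)/p}$; this tends to $0$ only for $p>1$ and actually blows up for $p<1$, so the claim is unproven in the range $0<p\le 1$. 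The clean fix, consistent with what the paper does elsewhere (proof of Lemma~\ref{lem1}(b)), is to test with $f_{a,n}=F_{a,n}/\tau(a)^{2/p}$ from Lemma~\ref{Borichevlemma}: the decay \eqref{BL2} gives uniform convergence to $0$ on compacta at once, and \eqref{BL1} gives $\int_{\D}|f_{a,n}|^q\omega^{q/2}\,d\mu\gtrsim\tau(a)^{2(1-1/s)}\widehat{\mu_\delta}(a)$, i.e.\ (a) $\Rightarrow$ (c) directly, bypassing (b). (ii) In the final step (c) $\Rightarrow$ (b) for arbitrary $t$, the splitting does not dispose of the $\mu_1$ piece: unlike $\widehat{(\mu_1)_\delta}(z)$, which vanishes identically for $|z|$ near $1$, the quantity $G_t(\mu_1)(z)=\int_{|\zeta|\le r}|k_{t,z}(\zeta)|^t\omega(\zeta)^{t/2}\,d\mu(\zeta)$ does not, and since $\tau(z)^{2(1-1/s)}\to\infty$ when $p<q$ you must show $G_t(\mu_1)(z)$ decays faster than $\tau(z)^{2(q/p-1)}$. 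The crude bound $|k_{t,z}(\zeta)|^t\omega(\zeta)^{t/2}\lesssim\tau(z)^{2(t-1)}$ obtained from Cauchy--Schwarz only yields $\tau(z)^{2(t-q/p)}$, which tends to $0$ only for $t>q/p$; for small $t$ one genuinely needs the pointwise off-diagonal decay $|K_z(\zeta)|\lesssim\|K_z\|_{A^2_\omega}\|K_\zeta\|_{A^2_\omega}\bigl(\min(\tau(z),\tau(\zeta))/|z-\zeta|\bigr)^{M}$ (available in \cite{xiaof} but nowhere stated in this paper). Until that estimate is imported, both (i) and (ii) remain genuine gaps.
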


\begin{otherth}\label{thm:CMOP2}
Given $\tau \in \mathcal{L^*},$  let $\mu$  be a finite positive Borel measure on $\D.$ Assume $0< q < \infty.$  The following conditions are  all equivalent:
\begin{enumerate}
\item[(a)]  $\mu$ is a $q$-Carleson measure for $A^{\infty}_\om.$
\item[(b)]  $\mu$ is a vanishing $q$-Carleson measure for $A^{\infty}_\om.$ 
\item[(c)] For any sufficiently small $\delta>0$ , we have  $$\widehat{\mu_{\delta}} \in L^{1}(\D, dA).$$
\item[(d)] For any $t>0$ , we have  $$G_t(\mu) \in L^{1}(\D, dA).$$
\end{enumerate}
\end{otherth}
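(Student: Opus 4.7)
My plan is to establish the four-way equivalence by exploiting the defining feature of the $A^\infty_\omega$ setting: every $f\in A^\infty_\omega$ satisfies the universal pointwise bound $|f(z)|^q\omega(z)^{q/2}\le\|f\|^q_{A^\infty_\omega}$, so the $q$-Carleson condition effectively reduces to integrability of a constant against $\mu$. I would first dispose of (a)~$\Leftrightarrow$~(b): the direction (b)~$\Rightarrow$~(a) is free, and for (a)~$\Rightarrow$~(b) I would take a sequence $\{f_n\}$ bounded in $A^\infty_\omega$ converging to zero uniformly on compact subsets of $\D$; then $|f_n|^q\omega^{q/2}$ is majorized by the constant $\sup_n\|f_n\|^q_{A^\infty_\omega}$, which is $\mu$-integrable since $\mu$ is finite, so the dominated convergence theorem gives $\int|f_n|^q\omega^{q/2}\,d\mu\to 0$. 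This collapse of boundedness and compactness is the distinguishing feature of the $p=\infty$ endpoint.

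Next I would handle (c)~$\Leftrightarrow$~(d) through reproducing kernel estimates. Lemma~\ref{lem:RK-PE1}(c) gives $|k_{t,z}(\zeta)|^t\omega(\zeta)^{t/2}\asymp\tau(z)^{-2}$ for $\zeta\in D_\delta(z)$, so restricting the Berezin integral to $D_\delta(z)$ yields $G_t(\mu)(z)\gtrsim\widehat{\mu_\delta}(z)$ pointwise, immediately giving (d)~$\Rightarrow$~(c). For the reverse, I would cover $\D$ by the $(\delta,\tau)$-lattice $\{z_k\}$ of Lemma~\ref{lem:Lcoverings}, distribute $G_t(\mu)(z)$ across the patches $D_\delta(z_k)$, and use a dyadic annular decomposition around $z$ together with off-diagonal decay of $|k_{t,z}|^t\omega^{t/2}$ (extractable from Theorem~\ref{RK-PE} and Lemma~\ref{nEstim}) to bound $G_t(\mu)(z)$ by a rapidly convergent sum in $\mu(D_\delta(z_k))$; Fubini--Tonelli and the finite multiplicity of the cover then yield $\|G_t(\mu)\|_{L^1(dA)}\lesssim\|\widehat{\mu_{3\delta}}\|_{L^1(dA)}$.

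The linchpin is (a)~$\Leftrightarrow$~(c), which I would settle through the Fubini identity
\[
\int_\D\widehat{\mu_\delta}(z)\,dA(z)=\int_\D\Big(\int_{\{z\in\D:\,\zeta\in D_\delta(z)\}}\tau(z)^{-2}\,dA(z)\Big)\,d\mu(\zeta)\asymp\delta^2\mu(\D),
\]
where the inner integral is evaluated uniformly in $\zeta$ via the Lipschitz property of $\tau$ and \eqref{eqn:asymptau}. Hence (c) is equivalent to $\mu(\D)<\infty$, and the universal pointwise bound then immediately yields $\int|f|^q\omega^{q/2}\,d\mu\le\|f\|^q_{A^\infty_\omega}\mu(\D)$, proving (a). Because the hypotheses already assume $\mu$ is finite, both (a) and (c) are in fact automatic; to derive (c) from (a) without that hypothesis, the plan would instead be to test the Carleson estimate against the Rademacher-randomized atomic sums $F_t(z)=\sum_k r_k(t)F_{z_k,n}(z)$ from Proposition~\ref{At1-pp}(b), each of $A^\infty_\omega$-norm $\lesssim 1$, and then combine Khintchine's inequality in $t$ with the local lower bound \eqref{BL1} to force $\sum_k\mu(D_\delta(z_k))<\infty$.

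The principal obstacle I anticipate is the off-diagonal kernel decay needed in (c)~$\Rightarrow$~(d): neither Theorem~\ref{RK-PE} nor Lemma~\ref{nEstim} furnishes explicit pointwise decay of $k_{t,z}(\zeta)$ away from $D_\delta(z)$---only a diagonal comparison and integral norm bounds---so I would need to either import pointwise decay estimates from \cite{PP1,xiaof} or synthesize them via a Schur-type interpolation between the diagonal estimate in Lemma~\ref{lem:RK-PE1}(c) and the $L^p$ kernel norms in Lemma~\ref{nEstim}.
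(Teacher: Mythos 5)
The paper never proves Theorem~\ref{thm:CMOP2}---it is imported wholesale from Section~3 of \cite{Hi-w} (``For the proofs of all theorems in this section, see Section 3 of \cite{Hi-w}'')---so there is no in-paper argument to compare you against; judged on its own terms, your proposal is essentially correct. Your central observation is the right one: under the stated hypothesis that $\mu$ is finite, all four conditions hold automatically, so the only genuinely nontrivial content is the chain of norm equivalences (equivalently, the implication (a)~$\Rightarrow$~$\mu(\D)<\infty$ for a general positive $\mu$), and your Khinchine/randomized-atomic-sum fallback for that implication is precisely the device the paper deploys in the analogous embedding result, Lemma~\ref{lem3}. The one place you overcomplicate is (c)~$\Rightarrow$~(d): the off-diagonal kernel decay you flag as ``the principal obstacle'' is not needed at all. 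Apply Tonelli together with the symmetry $|K_z(\zeta)|=|K_\zeta(z)|$ and the norm estimate $\|K_z\|_{A^t_\om}^{-t}\asymp\om(z)^{t/2}\tau(z)^{2(t-1)}$, then invoke Lemma~\ref{nEstim} with $\beta=2(t-1)$:
\begin{align*}
\int_\D G_t(\mu)(z)\,dA(z)
&\asymp \int_\D \om(\zeta)^{t/2}\Bigl(\int_\D |K_\zeta(z)|^t\,\om(z)^{t/2}\,\tau(z)^{2(t-1)}\,dA(z)\Bigr)d\mu(\zeta)\\
&\asymp \int_\D \om(\zeta)^{t/2}\,\om(\zeta)^{-t/2}\,\tau(\zeta)^{2(1-t)+2(t-1)}\,d\mu(\zeta)=\mu(\D),
\end{align*}
which, combined with your own Fubini identity $\|\widehat{\mu_\delta}\|_{L^1(\D,dA)}\asymp\delta^2\mu(\D)$, yields (c)~$\Leftrightarrow$~(d)~$\Leftrightarrow$~$\mu(\D)<\infty$ with no lattice, no dyadic annuli, and no pointwise decay estimates. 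The remaining steps check out: (b)~$\Rightarrow$~(a) is trivial; your dominated-convergence argument for (a)~$\Rightarrow$~(b) is valid, since the dominating constant $\sup_n\|f_n\|^q_{A^\infty_\om}$ is $\mu$-integrable exactly because $\mu(\D)<\infty$; and the diagonal estimate \eqref{eqn:RK-Diag1} does give $G_t(\mu)\gtrsim\widehat{\mu_\delta}$ pointwise, settling (d)~$\Rightarrow$~(c).
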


\begin{thm}\label{thm:VCMQP}
Given $\tau \in \mathcal{L^*},$  let $\mu$  be a finite positive Borel measure on $\D.$ Assume that  $0< q < p<\infty.$ The following statements are  equivalent:
\begin{enumerate}
\item[(a)]   $\mu$ is a   $q$-Carleson measure for $A^p_\om.$ 
\item[(b)]   $\mu$ is a  vanishing $q$-Carleson measure for $A^p_\om$ .
\end{enumerate}
\end{thm}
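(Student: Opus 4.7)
The direction (b) $\Rightarrow$ (a) is immediate, since every compact inclusion is bounded. The substance is the converse, where I would argue as follows.

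\textbf{Strategy for (a) $\Rightarrow$ (b).} Assume $\mu$ is a $q$-Carleson measure for $A^p_\omega$. By Theorem \ref{thm:CMOP}, this is equivalent to $\widehat{\mu_\delta}\in L^{p/(p-q)}(\D,dA)$ for any small enough $\delta>0$. I want to show that whenever $\{f_n\}$ is bounded in $A^p_\omega$, say $\|f_n\|_{A^p_\omega}\le M$, and $f_n\to 0$ uniformly on compact subsets of $\D$, then $\int_\D |f_n|^q\om^{q/2}\,d\mu\to 0$. Fix $r\in(0,1)$ and split the integral over $\D_r=\{|z|\le r\}$ and its complement. The inner piece tends to $0$ as $n\to\infty$ (for fixed $r$) from the uniform convergence of $f_n$ and the finiteness of $\mu$, so the whole argument reduces to showing the outer piece is small, uniformly in $n$, once $r$ is close to $1$.

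\textbf{Discretization and Hölder.} I would use a $(\delta,\tau)$-lattice $\{z_k\}$ from Lemma \ref{lem:Lcoverings} and let $I_r=\{k:D_\delta(z_k)\cap(\D\setminus\D_r)\ne\emptyset\}$, so that $|z_k|\to 1^-$ as $r\to 1^-$ uniformly in $k\in I_r$. By the submean value estimate \eqref{Eq-gamma} combined with $\tau(z)\asymp \tau(z_k)$ on $D_\delta(z_k)$,
\[
\int_{D_\delta(z_k)}|f_n|^q\om^{q/2}\,d\mu \;\lesssim\; \widehat{\mu_\delta}(z_k)\int_{D_{3\delta}(z_k)}|f_n|^q\om^{q/2}\,dA.
\]
Summing over $k\in I_r$ and applying Hölder with dual exponents $p/(p-q)$ and $p/q$ after writing
\[
\widehat{\mu_\delta}(z_k)\,B_k=\bigl[\widehat{\mu_\delta}(z_k)\,\tau(z_k)^{2(p-q)/p}\bigr]\cdot\bigl[B_k\,\tau(z_k)^{-2(p-q)/p}\bigr],
\]
where $B_k=\int_{D_{3\delta}(z_k)}|f_n|^q\om^{q/2}\,dA$, gives
\[
\int_{\D\setminus\D_r}|f_n|^q\om^{q/2}\,d\mu \;\lesssim\; \Bigl(\sum_{k\in I_r}\widehat{\mu_\delta}(z_k)^{p/(p-q)}\tau(z_k)^2\Bigr)^{(p-q)/p}\Bigl(\sum_k B_k^{p/q}\tau(z_k)^{-2(p-q)/q}\Bigr)^{q/p}.
\]

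\textbf{Closing the estimate.} A second Hölder application inside $B_k$ (with the same pair $p/q$ and $p/(p-q)$, over the disk $D_{3\delta}(z_k)$ of area $\asymp\tau(z_k)^2$) yields
\[
B_k^{p/q}\tau(z_k)^{-2(p-q)/q}\;\lesssim\;\int_{D_{3\delta}(z_k)}|f_n|^p\om^{p/2}\,dA,
\]
so the second factor is controlled by $N\|f_n\|_{A^p_\om}^p\le NM^p$ thanks to the finite multiplicity of $\{D_{3\delta}(z_k)\}$. The first factor is comparable, via the lattice, to $\int_{\{|z|\gtrsim r\}}\widehat{\mu_\delta}^{p/(p-q)}\,dA$, which is a tail of the convergent integral $\int_\D\widehat{\mu_\delta}^{p/(p-q)}\,dA$ and hence tends to $0$ as $r\to 1^-$. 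Combining the inner and outer estimates finishes the proof.

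\textbf{Main obstacle.} The delicate point is the correct distribution of $\tau(z_k)$ factors in the Hölder split: the weight $\tau(z_k)^{2(p-q)/p}$ must be tucked into the $\widehat{\mu_\delta}$ factor so that the resulting $\sum_k \widehat{\mu_\delta}(z_k)^{p/(p-q)}\tau(z_k)^2$ becomes the Riemann-sum approximant of $\int_\D\widehat{\mu_\delta}^{p/(p-q)}\,dA$, while the complementary weight $\tau(z_k)^{-2(p-q)/q}$ on the $f_n$-side is exactly what the local $L^q$-to-$L^p$ Hölder estimate on $D_{3\delta}(z_k)$ produces. Any other allocation of the $\tau$-weights either fails to be summable against $\widehat{\mu_\delta}^{p/(p-q)}$ or leaves an unbalanced $\|f_n\|_{A^p_\om}$-estimate.
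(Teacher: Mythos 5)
Your argument is correct, and the allocation of the $\tau(z_k)$-weights in the H\"older split is exactly right: the first factor becomes a tail of the convergent integral $\int_\D\widehat{\mu_\delta}^{\,p/(p-q)}\,dA$ guaranteed by Theorem~\ref{thm:CMOP}, while the local $L^q$-to-$L^p$ H\"older step plus finite multiplicity bounds the second factor by $\|f_n\|_{A^p_\om}^p$. Note that the paper itself gives no proof of this theorem (it defers to Section~3 of \cite{Hi-w}); your lattice-decomposition-plus-duality argument is the standard one used there and in \cite{PP1}, so there is nothing to flag beyond the routine bookkeeping of replacing $\delta$ by a comparable radius when passing from $\widehat{\mu_\delta}(z_k)$ to $\widehat{\mu_{c\delta}}(z)$ for $z\in D_\delta(z_k)$, which is harmless since Theorem~\ref{thm:CMOP}(b) holds for any radius.
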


\section{Embedding Theorems}\label{embedding}
In this section we establish embedding theorems of $S^p_\omega$ into $L^q(\D,d\mu)$ for  $0<p,q\le \infty$ and $\omega \in \mathcal{W}$, where $S^p_\omega$ are given in~\eqref{e:S^p} and~\eqref{e:S^infty}.
We start with the case $0<p\le q< \infty.$

\begin{lem} \label{lem1} 
Let $\omega \in \mathcal{W}$ and    $0<p\le q< \infty.$ Let $\mu$ be a positive Borel measure on $\D.$ Then  

\begin{enumerate}
\item[(a)]	$I_{\mu}: S^p_\omega\rightarrow L^q(\D,d\mu)$  is bounded if and only if for each $\delta>$ small enough,
\begin{equation}\label{eq1}
K_{\mu,\omega}(z)\:= \sup_{z\in \D}\frac{1}{\tau(z)^{2q/p}}\int_{D_{\delta}(z)} (1+\varphi'(\xi))^q\omega(\xi)^{-q/2}\,d\mu(\xi)<\infty.
\end{equation}

\item[(b)] $I_{\mu}: S^p_\omega\rightarrow L^q(\D,d\mu)$  is  compact if and only if 
\begin{equation}\label{eq2}
\lim_{|z|\to 1^-}\frac{1}{\tau(z)^{2q/p}}\int_{D_{\delta}(z)} (1+\varphi'(\xi))^q\omega(\xi)^{-q/2}\,d\mu(\xi)=0.
\end{equation}
\end{enumerate}
\end{lem}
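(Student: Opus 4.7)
The plan is to handle (a) and (b) in parallel. For necessity, I would test the inclusion against $f_a := F'_{a,n}$, where $F_{a,n}$ is the Borichev test function of Lemma~\ref{Borichevlemma} (taking $n=2$, say) and $|a|$ is close to $1$. Two facts drive the argument: Lemma~\ref{BL3} gives $|f_a(\xi)|\,\omega(\xi)^{1/2} \asymp 1+\varphi'(\xi)$ for $\xi \in D_\delta(a)$, while the Littlewood--Paley formula~\eqref{littleW}, combined with Lemma~\ref{Borichevlemma}(a) and the decay of $|F_{a,n}(0)|$ coming from~\eqref{BL2}, gives $\|f_a\|_{S^p_\omega}^p \asymp \tau(a)^2$ whenever $|a|$ is sufficiently close to $1$. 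Feeding $f_a$ into $\|f_a\|_{L^q(\mu)} \lesssim \|f_a\|_{S^p_\omega}$ and restricting the left-hand integral to $D_\delta(a)$ immediately yields~\eqref{eq1}. The compactness side~\eqref{eq2} follows from the same test functions, once one observes that the normalized family $\{f_a/\|f_a\|_{S^p_\omega}\}$ is bounded in $S^p_\omega$ and tends to $0$ uniformly on compact subsets as $|a|\to 1^-$ (the decay coming from \eqref{BL2} applied to $F_{a,n}$ together with the interior derivative estimate Lemma~\ref{DERI}).

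For sufficiency in (a), I would take a $(\delta,\tau)$-lattice $\{z_n\}$ from Lemma~\ref{lem:Lcoverings}, split $\int_\D |f|^q\,d\mu \le \sum_n \int_{D_\delta(z_n)} |f|^q\,d\mu$, and estimate each piece as
\[
	\int_{D_\delta(z_n)} |f|^q\,d\mu \;\le\; \sup_{\xi \in D_\delta(z_n)} \frac{|f(\xi)|^q \omega(\xi)^{q/2}}{(1+\varphi'(\xi))^q}\,\cdot\int_{D_\delta(z_n)} (1+\varphi'(\xi))^q\,\omega(\xi)^{-q/2}\,d\mu(\xi).
\]
The second factor is bounded by $K_{\mu,\omega}\,\tau(z_n)^{2q/p}$ by hypothesis; the first is controlled by~\eqref{Eq-gamma} (applied with $\beta = p/2$ and $\gamma = p$), yielding the bound $\tau(z_n)^{-2q/p}\bigl(\int_{D_{2\delta}(z_n)} |f|^p \omega^{p/2}(1+\varphi')^{-p}\,dA\bigr)^{q/p}$. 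The two powers of $\tau(z_n)$ cancel, and summation in $n$, together with the inequality $\sum_n a_n^{q/p} \le \bigl(\sum_n a_n\bigr)^{q/p}$ (valid since $q/p \ge 1$) and the finite multiplicity of the covering from Lemma~\ref{lem:Lcoverings}(iv), gives $\|f\|_{L^q(\mu)}^q \lesssim K_{\mu,\omega}\,\|f\|_{S^p_\omega}^q$.

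For sufficiency in (b), I would run the standard $\varepsilon$-argument: given a sequence $\{f_k\}$ bounded in $S^p_\omega$ with $f_k \to 0$ uniformly on compacta, and given $\varepsilon > 0$, use~\eqref{eq2} to choose $R < 1$ such that the integral $\tau(z_n)^{-2q/p}\int_{D_\delta(z_n)}(1+\varphi')^q\omega^{-q/2}\,d\mu < \varepsilon$ for all $z_n$ with $|z_n| > R$. Splitting the sum from the previous paragraph at $|z_n| = R$, the tail is controlled by $\varepsilon\,\|f_k\|_{S^p_\omega}^q$ exactly as in the sufficiency of (a), and the head is a finite sum over a compact set on which $f_k \to 0$ uniformly. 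Letting $k \to \infty$ then $\varepsilon \to 0$ gives $\|f_k\|_{L^q(\mu)}\to 0$.

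The main obstacle I foresee is the test-function side of necessity: the space $S^p_\omega$ is not itself a weighted Bergman space with weight in $\mathcal{W}$ (its density $\omega^{p/2}(1+\varphi')^{-p}$ is not of the prescribed exponential form), so the Borichev functions $F_{a,n}$ are not directly test functions for $S^p_\omega$. The Littlewood--Paley formula~\eqref{littleW} is exactly the bridge that turns $f_a = F'_{a,n}$ into a test function for $S^p_\omega$ with the right norm, but one must carefully discard the boundary term $|F_{a,n}(0)|$ in~\eqref{littleW}, which is where the pointwise decay~\eqref{BL2} is used.
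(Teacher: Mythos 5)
Your proposal is correct and follows essentially the same route as the paper: sufficiency via the $(\delta,\tau)$-lattice, the subharmonicity estimate \eqref{Eq-gamma}, the inequality $\sum_n a_n^{q/p}\le(\sum_n a_n)^{q/p}$ and finite multiplicity; necessity by testing against $F'_{a,n}$, using Lemma~\ref{BL3} on $D_\delta(a)$ and the Littlewood--Paley formula \eqref{littleW} to bound $\|F'_{a,n}\|_{S^p_\omega}$ by $\|F_{a,n}\|_{A^p_\omega}\asymp\tau(a)^{2/p}$; and the standard $\varepsilon$-splitting for compactness. (Your worry about discarding the term $|F_{a,n}(0)|$ is moot, since only the upper bound $\|F'_{a,n}\|_{S^p_\omega}\lesssim\|F_{a,n}\|_{A^p_\omega}$ is needed, which \eqref{littleW} gives directly.)
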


\begin{proof}
Suppose first that the condition \eqref{eq1}  {holds.}  Then, by  Lemma  \ref{lem:Lcoverings} and \eqref{Eq-gamma}, we get 
\begin{align*}
	&\|f\|_{L^q(\mathbb{D},d\mu)}^q=\int_{\mathbb{D}} |f(z)|^q\, d\mu(z)\leq \sum_{k=0} ^{\infty}\, \int_{D_{\delta}(z_{k})} |f(z)|^q\, d\mu(z)\\
	&=\sum_{k=0} ^{\infty}\, \int_{D_{\delta}(z_{k})}  |f(z)|^q\, \frac{\omega(z)^{q/2}}{(1+{\varphi'}(z))^q}\,(1+{\varphi'}(z))^q\, \omega(z)^{-q/2}\,d\mu(z)\\
	&\lesssim \sum_{k=0} ^{\infty} \int_{D_{\delta}(z_{k})}\left(\frac{1}{\tau(z)^2}   \int_{D_{\delta}(z)} |f(s)|^p \frac{\omega(s)^{p/2}}{(1+{\varphi'}(s))^p}dA(s)\right)^{q/p}(1+{\varphi'}(z))^q \omega(z)^{-q/2}d\mu(z)\\
	&\lesssim   \sum_{k=0} ^{\infty}\left( \int_{D_{3\delta}(z_{k})} |f(s)|^p \frac{\omega(s)^{p/2}}{(1+{\varphi}^{'}(s))^p}\,dA(s)\right)^{q/p}\, \int_{D_{\delta}(z_{k})} \frac{(1+{\varphi'}(z))^q \omega(z)^{-q/2}}{\tau(z)^{2q/p}}d\mu(z),
\end{align*}
for  small enough $\delta>0$. By applying our assumption, we have
$$
\int_{\D}|f(z)|^q\, d\mu(z)
\lesssim K_{\mu,\omega} \sum_{k=0}^{\infty}\left(\int_{D_{3\delta}(z_k)}|f(s)|^p\,\frac{\omega(s)^{p/2}}{(1+\varphi'(s))^p}\,dA(s)\right)^{q/p}.
$$
Using a similar argument as in the proof of Theorem $1.1$ in \cite{O}, Minkowski's inequality and the finite multiplicity $N$ of the covering $\{D_{3\delta}(z_k)\}$, we get
$$
	\|f\|_{L^q(\mathbb{D},d\mu)}^q 
	\lesssim K_{{\mu,\omega}}  \left(\sum_{k=0} ^{\infty} \int_{D_{3\delta}(z_{k})} |f(s)|^p \frac{\omega(s)^{p/2}}{(1+{\varphi}^{'}(s))^p}\,dA(s)\right)^{q/p}
	\lesssim K_{\mu,\omega} \,N^{q/p}\, \|f\|^q_{S^p_\omega}.
$$
This  proves that the embedding $I_{\mu}: S^p_\omega\rightarrow L^q(\D,d\mu)$ is bounded with $\|I_\mu\|_{L^q(\mathbb{D},d\mu)}^q\leq K_{\mu,\omega}.$ 

Conversely, suppose that $I_{\mu}: S^p_\omega\rightarrow L^q(\D,d\mu)$  is bounded. Let $a\in \D$ with $|a|\geq \rho_0$ that is defined in Lemma \ref{Borichevlemma}. By Lemma \ref{BL3},   
$$
	|F'_{a,n}(z)|\, \omega(z)^{1/2} \asymp (1+\varphi'(z)),\ z\in D_\delta(a),
$$
(where $F_{a,n}$ is the test function  in Lemma \ref{Borichevlemma}), and so
$$
	\int_{D_{\delta}(a)}(1+\varphi'(z))^q\omega(z)^{-\frac{q}{2}}\,d\mu(z)
	\lesssim \int_{D_{\delta}(a)} |F'_{a,n}(z)|^q\,d\mu(z)
	\lesssim \int_{\D} |F'_{a,n}(z)|^q\,d\mu(z).
$$
Using our assumption, (a) of Lemma \ref{Borichevlemma}, and \eqref{littleW}, we obtain 
\begin{align*}
	\int_{D_{\delta}(a)}(1+\varphi'(z))^q\omega(z)^{-\frac{q}{2}}\,d\mu(z)
	&  \lesssim \|I_\mu\|^q\,\|F'_{a,n}\|^q_{S^p_\omega}\\
	&{\le \|I_\mu\|^q\,\|F_{a,n}\|^q_{A^p_\omega}}\asymp  \|I_\mu\|^q\,\tau(a)^{2q/p}.
\end{align*}
Then dividing both sides by $\tau(a)^{2q/p}$ gives
\begin{align*}
\frac{1}{\tau(a)^{2q/p}}\int_{D_{\delta}(a)}(1+\varphi'(z))^q\omega(z)^{-\frac{q}{2}}\,d\mu(z)\le \|I_\mu\|^q<\infty.
\end{align*}
and so
$$
	\sup_{a\in \mathbb{D}} \frac{1}{\tau(a)^{2q/p}} \int_{D_{\delta}(a)}  (1+{\varphi}^{'}(z))^q\, \omega(z)^{-q/2}\,d\mu(z) \le \|I_\mu\|^q<\infty,
$$
which means that $K_{\mu,\omega}\lesssim \|I_\mu\|^q$. 

To prove (b), suppose that $I_{\mu}: S^p_\omega\rightarrow L^q(\D,d\mu)$  is  compact. Consider the function 
$$
	f_{a,n}(z):=\frac{F_{a,n}(z)}{\tau(a)^{2q/p}},\quad \textrm{for}\,\,\, |a|\geq \rho_0.
$$ 
As in the proof of Theorem $1$ of \cite{PP1} and using Lemma  \ref{Borichevlemma}, we can show that   the   function $f_{a,n}$ is  bounded and converges  to zero uniformly on compact subsets of $\D$ when $|a|\to 1^-$. Therefore, by Lemma \ref{DERI}, $f'_{a,n}$ converges to zero uniformly on compact subsets of $\D$ as $|a|\to1^-$.
\begin{align*}
	\frac{1}{\tau(a)^{2q/p}}\int_{D_{\delta}(a)}(1+\varphi'(z))^q\omega(z)^{-\frac{q}{2}}\,d\mu(z)&\lesssim \int_{D_{\delta}(a)} |f'_{a,n}(z)|^q\,d\mu(z)\\
	&\le \int_{\D} |f'_{a,n}(z)|^q\,d\mu(z)=\|I_\mu f'_{a,n}\|_{L^q(\mu)}.
\end{align*}
Since $I_{\mu}$ is compact,
$$
	\lim_{|a|\to 1^{-}} \|f'_{a,n}\|_{L^q(\mu)}=0,
$$
and so 
$$
	\lim_{|a|\to 1^{-}} \frac{1}{\tau(a)^{2q/p}}\int_{D_{\delta}(a)}(1+\varphi'(z))^q\omega(z)^{-\frac{q}{2}}\,d\mu(z)=0.
$$
This shows that \eqref{eq2} holds.
 
Conversely, suppose that \eqref{eq2} holds. Let $\{f_n\}\subset S^p_\omega$ be a bounded sequence converging  to zero uniformly on compact subsets of $\D$ and  $\{z_k\}$ be a $(\delta,\tau)$-lattice.   To prove that $I_{\mu}$ is compact, it suffices to show that $\|f_n\|_{L^q(\mu)}\rightarrow 0.$ By the assumption, given any $\varepsilon >0,$ there exists $0< r_1<1$ with 
\begin{equation}
	\frac{1}{\tau(a)^{2q/p}}\int_{D_{\delta}(a)}(1+\varphi'(z))^q\omega(z)^{-\frac{q}{2}}\,d\mu(z)
	<\varepsilon, \quad r_1<|a|<1.
\end{equation}
Observe that there is $r_1<r_2<1$ such that if a point $z_j$ of the sequence $\{z_k\}$ belongs to $\{z\in\D: |z|\le r_1\},$ then $D_\delta(z_j)\subset \{z\in\D: |z|\le r_2\}.$ {Therefore, since $\{f_n\}$ converges to zero uniformly on compact subsets of $\D,$ }there exists an integer $n_0$ such that 
\begin{align*}
|f_n(z)|<\varepsilon,\quad \textrm{for}\,\,\, |z|\le r_2 \,\, \textrm{and}\,\,\, n\geq n_0.
\end{align*}
 {We split the integration of this function {into} two parts: the first integration is over ${|z|\le r_2}$ and the other integration is over ${|z|\geq r_2}$.}
On the one hand, 
\begin{equation}\label{eq4}
\begin{split}
\int_{|z|\le r_2}|f_n(z)|^q\,d\mu(z)<\varepsilon^q.
\end{split}
\end{equation}
On the other hand, by Lemma \ref{lem:Lcoverings} and Lemma \ref{DERI},  we obtain
\begin{align*}
	&\int_{|z|> r_2}|f_n(z)|^q\,d\mu(z)\le \sum_{{|z_k|>r_1}}\int_{D_\delta(z_k)}|f_n(z)|^q\,d\mu(z)\\
&\lesssim \sum_{|z_k|>r_1}\int_{D_\delta(z_k)}\left(\frac{1}{\tau(z_k)^2}\int_{D_{\delta}(z)}|f_n(s)|^p\,\frac{\omega(s)^{p/2}}{(1+\varphi'(s))^p}\,dA(s)\right)^{q/p}\, (1+\varphi'(z))^q\omega(z)^{-\frac{q}{2}}\,d\mu(z)\\&\lesssim  \sum_{k=0}^{\infty}\left(\int_{D_{3\delta}(z_k)}|f_n(s)|^p\,\frac{\omega(s)^{p/2}}{(1+\varphi'(s))^p}\,dA(s)\right)^{q/p}\int_{D_{\delta}(z_k)}\frac{(1+\varphi'(z))^q\omega(z)^{-\frac{q}{2}}}{\tau(z_k)^{2q/p}}\,d\mu(z)\\
&\lesssim \varepsilon \|f_n\|_{S^p_\omega}^q\,\sup_{|z_k|>r_1}\frac{1}{\tau(z_k)^{2q/p}}\int_{D_{\delta}(z_k)}(1+\varphi'(z))^q\omega(z)^{-\frac{q}{2}}\,d\mu(z)\lesssim \varepsilon \, \|f_n\|_{S^p_\omega}^q \lesssim \varepsilon.
\end{align*}
These together with \eqref{eq4} show that $I_{\mu}: S^p_\omega\rightarrow L^q(\D,d\mu)$ is compact.
\end{proof}

To characterize boundedness and compactness of $I_{\mu} : S^p_\omega \to L^q(\D,d\mu)$ with $0 < q < p < \infty$, consider the function $F_{\delta,\mu}(\varphi)$ defined by
 \begin{equation}\label{F-delta-mu}
 F_{\delta,\mu}(\varphi)(z):= \frac{1}{\tau(z)^{2}}\int_{D_{\delta}(z)} (1+\varphi'(\xi))^q\omega(\xi)^{-q/2}\,d\mu(\xi).
 \end{equation}
We use Luecking's approach in \cite{Lue1} based on   Khinchine's inequality. Recall that Rademacher functions $R_n$ {are defined by} 
$$
	R_{0}(t) =
	\begin{cases}
	1      & \quad \text{if } 1\le t-[t]< 1/2 \\
	-1 & \quad \text{if } 1/2\le t-[t]< 1;
\end{cases}
$$
$$
	R_{n}(t) =R_{0}(2^nt),\quad n\geq 1,
$$
where $[t]$ denotes the largest integer not exceeding $t.$

\begin{otherl}[Khinchine's inequality \cite{Lue1}]\label{Khin}
For $0< p < \infty,$ there exists a positive constant $C_p$ such that  
\[ 
C_p^{-1}\Big(\sum_{\substack{k = 1}}^{n}|\lambda_k|^2\Big)^{p/2}\le \int_{0}^{1}\Big|\sum_{\substack{k = 1}}^{n}\lambda_k R_k(t)\Big|^p dt \le C_p \Big(\sum_{\substack{k = 1}}^{n}|\lambda_k|^2\Big)^{p/2},
\]
for all $n\in \mathbb{N}$ and $\{\lambda_k\}^n_{k=1}\subset \C.$
\end{otherl}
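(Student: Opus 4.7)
The plan is to follow the classical route, relying only on the independence of the Rademacher functions, the elementary estimate $\cosh x\le e^{x^2/2}$, and two applications of Hölder's inequality. Write $S_n(t)=\sum_{k=1}^n \lambda_k R_k(t)$; since $R_k$ takes values $\pm 1$ with equal measure on $[0,1]$ and the collection $\{R_k\}$ is stochastically independent, we may treat $S_n$ as a Rademacher sum on a probability space, so $\|\cdot\|_{L^p([0,1])}$ is an $L^p$-norm on a probability space of total mass $1$. This immediately gives the monotonicity $\|S_n\|_p\le \|S_n\|_q$ for $p\le q$, which we will use twice.

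The first step is to handle $p=2$ exactly: expand $|S_n|^2$, use $R_k R_j$ integrates to $\delta_{jk}$ (independence plus $\int R_k=0$ for $k\ge 1$), and obtain $\int_0^1 |S_n|^2\,dt=\sum_{k=1}^n |\lambda_k|^2$. This is both bounds with $C_2=1$. The second step is the upper bound for $p\ge 2$. By independence and $\cosh(\lambda_k t)\le e^{\lambda_k^2 t^2/2}$,
\[
\int_0^1 e^{t S_n(s)}\,ds=\prod_{k=1}^n \cosh(\lambda_k t)\le \exp\!\Bigl(\tfrac{t^2}{2}\sum_{k=1}^n |\lambda_k|^2\Bigr),
\]
which is a sub-Gaussian bound of variance $\sigma^2=\sum|\lambda_k|^2$. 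Optimizing in $t>0$ then produces a tail estimate $\lambda\{|S_n|>u\}\le 2e^{-u^2/(2\sigma^2)}$, whence
\[
\int_0^1 |S_n|^p\,dt=p\int_0^\infty u^{p-1}\lambda\{|S_n|>u\}\,du\le C_p\sigma^p,
\]
which gives the upper inequality for every $p\ge 2$ (and, by monotonicity, for every $0<p<\infty$ after we take $q=2$ in the monotonicity step above). In the complex case one splits $\lambda_k=a_k+ib_k$ and applies the real case to the real and imaginary parts of $S_n$.

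The third step is the lower bound for $0<p<2$, which is the main obstacle because the naive inequality $\|S_n\|_p\le \|S_n\|_2$ goes the wrong way. Here we use Lyapunov/Hölder interpolation. Fix any $q>2$ (say $q=4$) and choose $\theta\in(0,1)$ with $\tfrac{1}{2}=\tfrac{\theta}{p}+\tfrac{1-\theta}{q}$. Hölder's inequality applied to $|S_n|^{2\theta}\cdot |S_n|^{2(1-\theta)}$ with exponents $p/(2\theta)$ and $q/(2(1-\theta))$ gives
\[
\|S_n\|_2\le \|S_n\|_p^{\theta}\,\|S_n\|_q^{1-\theta}.
\]
By the second step, $\|S_n\|_q\le C_q\bigl(\sum|\lambda_k|^2\bigr)^{1/2}=C_q\|S_n\|_2$, so dividing yields $\|S_n\|_2^{\,1-(1-\theta)}\le C_q^{1-\theta}\|S_n\|_p^{\theta}$; i.e., $\|S_n\|_p\gtrsim \|S_n\|_2=\bigl(\sum|\lambda_k|^2\bigr)^{1/2}$, with a constant depending only on $p$. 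Collecting the two bounds and taking the larger of the two constants gives the advertised $C_p$. The only real subtlety is the interpolation step for $p<2$; the remainder is routine computation once the sub-Gaussian estimate is in hand.
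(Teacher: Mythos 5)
Your argument is correct, but note that the paper does not prove this lemma at all: it is imported verbatim from Luecking's paper \cite{Lue1} as a known classical result, so there is no in-paper proof to compare against. What you have written is the standard textbook proof of Khinchine's inequality --- exact orthogonality at $p=2$, the sub-Gaussian moment-generating-function bound $\prod_k\cosh(\lambda_k t)\le e^{t^2\sigma^2/2}$ plus a Chernoff tail estimate and the layer-cake formula for the upper bound when $p\ge 2$, monotonicity of $L^p$ norms on a probability space for the upper bound when $p<2$, and the Lyapunov/H\"older interpolation $\|S_n\|_2\le\|S_n\|_p^{\theta}\|S_n\|_q^{1-\theta}$ combined with the already-proved bound $\|S_n\|_q\lesssim\|S_n\|_2$ to reverse the inequality for $p<2$. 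All steps are sound; the only points worth making explicit in a written-up version are (i) the quasi-triangle inequality needed to pass from real to complex coefficients when $p<1$, and (ii) the trivial disposal of the case $\sum_k|\lambda_k|^2=0$ before dividing by $\|S_n\|_2^{1-\theta}$ in the interpolation step.
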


\begin{lem} \label{lem2}  Let $\omega \in \mathcal{W}$ and    $0<q<p< \infty.$ Let $\mu$ be a finite positive Borel measure on $\D.$ Then,  the following statements are equivalent:
\begin{enumerate} 
\item[(a)] The operator $I_{\mu}: S^p_\omega\rightarrow L^q(\D,d\mu)$  is bounded.
\item[(b)] The operator $I_{\mu}: S^p_\omega\rightarrow L^q(\D,d\mu)$  is compact.
\item[(c)] The function 
\begin{equation}\label{eq3_1}
	F_{\delta,\mu}(\varphi)\in L^{p/(p-q)}(\D,dA).
\end{equation}
\end{enumerate}
\end{lem}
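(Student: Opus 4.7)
The implication $(b)\Rightarrow(a)$ is automatic; the plan is a cyclic argument $(a)\Rightarrow(c)\Rightarrow(b)$. Throughout, write $F(z):=F_{\delta,\mu}(\varphi)(z)$ and note the algebraic identity
\[
\tau(z_k)^{-2q/p}\int_{D_\delta(z_k)}(1+\varphi')^q\om^{-q/2}\,d\mu=\tau(z_k)^{2(p-q)/p}\,F(z_k),
\]
so that its $p/(p-q)$-power equals $\tau(z_k)^{2}\,F(z_k)^{p/(p-q)}$; this is the bridge between the Khinchine output below and condition~(c).

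For $(c)\Rightarrow(b)$ I would adapt the covering scheme from the proof of Lemma~\ref{lem1}. Let $\{f_n\}\subset S^p_\om$ be bounded and converge to $0$ uniformly on compact subsets, and fix $\varepsilon>0$. By (c), choose $r<1$ with $\int_{\{|z|>r\}}F^{p/(p-q)}dA<\varepsilon$, and split $\int_\D |f_n|^q\,d\mu$ into integrals over $\{|z|\le r\}$ and $\{|z|>r\}$. The inner piece vanishes as $n\to\infty$ because $\mu$ is finite and $f_n\to 0$ uniformly on $\{|z|\le r\}$. For the outer piece, cover by $\{D_\delta(z_k)\}$ from Lemma~\ref{lem:Lcoverings} and apply the submean inequality \eqref{Eq-gamma} exactly as in Lemma~\ref{lem1} to reach
\[
\int_{\{|z|>r\}}\!|f_n|^q d\mu \lesssim \sum_{|z_k|>r^-}\!\left(\int_{D_{3\delta}(z_k)}\!|f_n|^p\frac{\om^{p/2}}{(1+\varphi')^p}\,dA\right)^{q/p}\tau(z_k)^{-2q/p}\!\int_{D_\delta(z_k)}\!(1+\varphi')^q\om^{-q/2}d\mu.
\]
Since $p/q>1$, H\"older's inequality with conjugate exponents $p/q$ and $p/(p-q)$ together with the finite multiplicity of the cover bounds this by a constant times $\|f_n\|_{S^p_\om}^q\bigl(\sum_{|z_k|>r^-}\tau(z_k)^2F(z_k)^{p/(p-q)}\bigr)^{(p-q)/p}$, and the standard discrete-to-continuous comparison identifies the tail sum with $\int_{\{|z|>r''\}}F^{p/(p-q)}\,dA$ up to constants, so this is $\lesssim\varepsilon^{(p-q)/p}$.

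For $(a)\Rightarrow(c)$ I would apply Khinchine in the spirit of Luecking. Fix a $(\delta,\tau)$-lattice $\{z_k\}$, test functions $F_{z_k,n}$ from Lemma~\ref{Borichevlemma}, and Rademacher functions $R_k$. For a finitely supported $\lambda=\{\lambda_k\}\in\ell^p$ and $t\in[0,1]$, set $f_t(z)=\sum_k R_k(t)\,\lambda_k\,F_{z_k,n}(z)/\tau(z_k)^{2/p}$. Proposition~\ref{At1-pp}(a) gives $\|f_t\|_{A^p_\om}\lesssim\|\lambda\|_{\ell^p}$ uniformly in $t$, and hence $\|f_t'\|_{S^p_\om}\lesssim\|\lambda\|_{\ell^p}$ by \eqref{littleW}. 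Applying the boundedness hypothesis to $f_t'$, integrating in $t$, swapping by Fubini, and invoking Khinchine's inequality (Lemma~\ref{Khin}) pointwise in $z$ yields
\[
\int_\D\left(\sum_k|\lambda_k|^2|F'_{z_k,n}(z)|^2\tau(z_k)^{-4/p}\right)^{q/2}d\mu(z)\lesssim\|\lambda\|_{\ell^p}^q.
\]
Using $|F'_{z_k,n}(z)|\,\om(z)^{1/2}\gtrsim 1+\varphi'(z)$ on $D_\delta(z_k)$ (Lemma~\ref{BL3}), I keep only the $k$-th term of the square-sum when $z\in D_\delta(z_k)$ and exploit the finite multiplicity $N$ of $\bigcup_k D_\delta(z_k)=\D$ to conclude
\[
\sum_k|\lambda_k|^q\,\tau(z_k)^{-2q/p}\int_{D_\delta(z_k)}(1+\varphi')^q\om^{-q/2}\,d\mu\lesssim\|\lambda\|_{\ell^p}^q.
\]
Since this holds for every finitely supported $\lambda\in\ell^p$ and $p/q>1$, duality between $\ell^{p/q}$ and $\ell^{p/(p-q)}$ applied to $\nu_k:=|\lambda_k|^q$ delivers $\sum_k\tau(z_k)^2F(z_k)^{p/(p-q)}<\infty$, which is equivalent to (c).

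The main technical hurdle is the discrete-to-continuous comparison $\sum_k\tau(z_k)^2F(z_k)^{p/(p-q)}\asymp\int_\D F^{p/(p-q)}\,dA$, invoked in both directions. This is standard: since $D_\delta(z)\subset D_{3\delta}(z_k)$ whenever $z\in D_\delta(z_k)$ and $\tau(z)\asymp\tau(z_k)$ there, replacing $D_\delta$ by $D_{3\delta}$ in the definition of $F$ changes the function only by a bounded factor (by finite multiplicity), and membership in $L^{p/(p-q)}(\D,dA)$ is independent of the parameter $\delta$. The other delicate point is the single-term lower bound on the Khinchine sum, which is available only because $|F'_{z_k,n}|$ is comparable to $1+\varphi'$ throughout $D_\delta(z_k)$ via Lemma~\ref{BL3}.
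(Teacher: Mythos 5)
Your proposal is correct and the decisive direction (a)$\Rightarrow$(c) follows the same Luecking--Khinchine route as the paper: Rademacher randomization of the atoms $F_{z_k,n}/\tau(z_k)^{2/p}$, the pointwise lower bound from Lemma~\ref{BL3}, and $\ell^{p/q}$--$\ell^{p/(p-q)}$ duality. Two minor differences: your single-term extraction $a_k^q=(a_k^2)^{q/2}\le(\sum_j a_j^2)^{q/2}$ combined with finite multiplicity handles all $q$ at once, whereas the paper splits into the cases $q\ge 2$ and $q<2$; and for (c)$\Rightarrow$(b) the paper avoids the lattice entirely, instead applying \eqref{Eq-gamma}, a Fubini swap, and H\"older directly to the continuous integral, so that the function $F_{\delta,\mu}(\varphi)$ appears verbatim and no discrete-to-continuous comparison (nor the $\delta$-independence of condition (c)) is needed. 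Your discretized version of (c)$\Rightarrow$(b) is still valid, since the aperture-change step you flag is standard and is implicitly used elsewhere in the paper (e.g.\ in Theorem~\ref{thm:CMOP}); also note that your exponent $\tau(z_k)^{-2q/p}$ is the correct one, and the paper's $\tau(z_m)^{-2/p}$ appears to be a typographical slip.
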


\begin{proof}
The implication (b) $\Rightarrow$ (a) is obvious. To prove that (a) implies (c), suppose that the operator $I_{\mu}: S^p_\omega\rightarrow L^q(\D,d\mu)$  is bounded. Let $\{z_k\}$ be a $(\delta,\tau)$-lattice on $\D$. Corresponding to each $\lambda=\{\lambda_m\}_m \in \ell^p, $ we consider 
$$
	f(z)= \sum_{|z_m|\geq \rho_0}\lambda_{m} f_{z_m,n}(z),
$$
where $f_{z_m,n}(z)=\frac{F_{z_m,n}(z)}{\tau(z_m)^{2/p}}$\,\, {and $0<\rho_0<1$} as in Lemma~\ref{Borichevlemma}. By Proposition \ref{At1-pp} and \eqref{littleW},
$$
	\|f'\|_{S^p_{\om}}
	\lesssim \|f\|_{A^p_{\om}}
	\lesssim \|\lambda\|_{\ell^p}.
$$
Note that as an application of Khinchine’s inequality (Lemma \ref{Khin}), replace $\lambda_m$ with the Rademacher functions $R_m(t)\lambda_m$, and then integrate with respect to $t$ from 0 and 1, {which yields}
$$
	\left(\sum_{|z_m|\geq\rho_0}\Big|\lambda_m\Big|^2\,\Big|f'_{z_m,n}(z)\Big|^2 \,\right)^{q/2} 
	\lesssim \int_{0}^{1} \Big|  \sum_{|z_m|\geq\rho_0}R_m(t)\lambda_m\, f'_{z_m,n}(z)\Big|^q  \,dt
$$
and so
\begin{equation}\label{eq7}
\begin{split}
	&\int_{\D}\left(\sum_{|z_m|\geq\rho_0}|\lambda_m|^{2} |f'_{z_m,n}(z)|^{2} \, \omega(z)\,\right)^{q/2}\,d\mu(z)\\&\lesssim \int_{\D}\int_{0}^{1} \left|  \sum_{z_m:|z_m|\geq\rho_0} R_m(t)\lambda_m\,f'_{z_m,n}(z)\right|^q\, \omega(z)^{q/2}  dt\,d\mu(z)\\&= \int_{0}^{1} \int_{\D} \left|  \sum_{z_m:|z_m|\geq\rho_0} R_m(t)\lambda_m\,f'_{z_m,n}(z)\right|^q\, \omega(z)^{q/2}d\mu(z)\,dt\\&  \lesssim {\int_{0}^{1}  \|f'\|^q_{S^p_\omega}\,dt= \|f'\|^q_{S^p_\omega}\,\lesssim\,  \|f\|^q_{A^p_\omega}}
\lesssim \|\lambda\|^q_{\ell^p}.
\end{split}
\end{equation}
By Lemmas \ref{lem:Lcoverings} and \ref{BL3},
\begin{align*}
	\sum_{\substack{|z_m|\geq\rho_0}}\,\frac{|\lambda_m|^q}{\tau(z_m)^{2/p}}&\int_{D_{3\delta}(z_m)} (1+\varphi'(\xi))^q\omega(\xi)^{-\frac{q}{2}}\,d\mu(\xi)\\
&\lesssim \sum_{\substack{|z_m|\geq\rho_0}}{|\lambda_m|^q}\,\int_{D_{3\delta}(z_m)} |f'_{z_m,n}(\xi)|^q\,d\mu(\xi)\\
&= \int_{\D}\sum_{\substack{|z_m|\geq\rho_0}}{|\lambda_m|^q}\, |f'_{z_m,n}(\xi)|^q\chi_{D_{3\delta}(z_m)}(\xi)\,d\mu(\xi),
\end{align*}
where  $\chi_{D_{3\delta}(z_m)}(\xi)$  denotes the characteristic function of the set ${D_{3\delta}(z_m)}$. Now, by the fact that $\sum_{\substack{s}}^{\infty} z_m^k \le \left(\sum_{\substack{s}}^{\infty} z_m\right)^k , \, k\geq 1, z_m \geq 0$ for $q\geq 2$, we get
\begin{align*}
 	&\int_{\D}\sum_{\substack{|z_m|\geq\rho_0}}{|\lambda_m|^q}\, |f'_{z_m,n}(\xi)|^q\chi_{D_{3\delta}(z_m)}(\xi)\,d\mu(\xi)\\
	&=\int_{\D}\sum_{\substack{|z_m|\geq\rho_0}}\left({|\lambda_m|^2}\, |f'_{z_m,n}(\xi)|^2\chi_{D_{3\delta}(z_m)}(\xi)\right)^{q/2}d\mu(\xi)\\ 
	&\lesssim \int_{\D}\Big(\sum_{\substack{|z_m|\geq\rho_0}}|\lambda_m|^2\,|f'_{z_m,n}(\xi)|^2\Big)^{q/2}\,d\mu(\xi).
\end{align*}
For $q<2$,  by H\"older's inequality and Lemma \ref{lem:Lcoverings}, we get 
\begin{align*}
	& \int_{\D}\sum_{\substack{|z_m|\geq\rho_0}}|\lambda_m|^q\, |f'_{z_m,n}(\xi)|^q\chi_{D_{3\delta}(z_m)}(\xi)\,d\mu(\xi)\\&\leq  \int_{\D}\left(\sum_{\substack{|z_m|\geq\rho_0}}|\lambda_m|^2\, |f'_{z_m,n}(\xi)|^2\right)^{q/2} \left(\sum_{\substack{|z_m|\geq\rho_0}} \chi_{D_{3\delta}(z_m)}(\xi)\right)^{1-\frac{q}{2}}d\mu(z) \\
&\lesssim N^{1-\frac{q}{2}}\int_{\D}\Big(\sum_{\substack{|z_m|\geq\rho_0}}|\lambda_m|^2\,|f'_{z_m,n}(\xi)|^2 \,\Big)^{q/2}\,d\mu(z).
\end{align*}
Therefore, for $q<2$ and $q\geq 2$, we have
\begin{align*}
	&\sum_{\substack{|z_m|\geq\rho_0}}\,\frac{|\lambda_m|^q}{\tau(z_m)^{2/p}}\int_{D_{3\delta}(z_m)} (1+\varphi'(\xi))^q\omega(\xi)^{-\frac{q}{2}}\,d\mu(\xi)\\
	&\lesssim \int_{\D}\sum_{\substack{|z_m|\geq\rho_0}}|\lambda_m|^q\, |f'_{z_m,n}(\xi)|^q\chi_{D_{3\delta}(z_m)}(\xi)\,d\mu(\xi)\\
	&\lesssim \max(1,N^{1-\frac{q}{2}})\int_{\D}\Big(\sum_{\substack{|z_m|\geq\rho_0}}|\lambda_m|^2\,|f'_{z_m,n}(\xi)|^2 \, \Big)^{q/2}\,d\mu(z).
\end{align*}
By applying \eqref{eq7}, we have
\begin{align*}
\sum_{\substack{|z_m|\geq\rho_0}}\frac{|\lambda_m|^q}{\tau(z_m)^{2/p}}\int_{D_{3\delta}(z_m)} (1+\varphi'(\xi))^q\omega(\xi)^{-\frac{q}{2}}\,d\mu(\xi)
\lesssim \|\lambda\|^q_{\ell^p}.
\end{align*}
Thus, taking $|b_m| = |\lambda_m|^q \in \ell^{p/q}$ and using  the duality $(\ell^{p})^{*}=\ell^{q}$, we see that the sequence 
$$
	\Big\{\frac{1}{\tau(z_m)^{2/p}}\int_{D_{3\delta}(z_m)} (1+\varphi'(\xi))^q\omega(\xi)^{-\frac{q}{2}}\,d\mu(\xi)\Big\}_m \in \ell^{^{p/(p-q)}}.$$
Observe that there is $\rho_0<r_1<1$ such that if a point $z_j$ of the sequence $\{z_k\}$ belongs to $\{z\in\D: |z|\le\rho_0 \},$ then $D_\delta(z_j)\subset \{z\in\D: |z|\le r_1\}.$
Thus, by Lemma \ref{lem:Lcoverings} and \eqref{eqn:asymptau}, we get
\begin{align*}
	&\int_{|z|\geq r_1}\left(\frac{1}{\tau(z)^{2}}\int_{D_{\delta}(z)} (1+\varphi'(\xi))^q\omega(\xi)^{-q/2}\,d\mu(\xi)\right)^{p/(p-q)}dA(z)\\
	&\lesssim \sum_{\substack{|z_m|\geq\rho_0}}\int_{D_{\delta}(z_m)}\left(\frac{1}{\tau(z)^{2}}\int_{D_{\delta}(z)} (1+\varphi'(\xi))^q\omega(\xi)^{-q/2}\,d\mu(\xi)\right)^{p/(p-q)}dA(z)\\
	&\lesssim \sum_{\substack{|z_m|\geq\rho_0}}\left(\frac{1}{\tau(z_m)^{2/p}}\int_{D_{3\delta}(z_m)} (1+\varphi'(\xi))^q\omega(\xi)^{-q/2}\,d\mu(\xi)\right)^{p/(p-q)}<\infty.
\end{align*}
Therefore, since 
$$
	\int_{|z|\leq r_1}\left(\frac{1}{\tau(z)^2} \int_{D_\delta(z)} (1+\varphi'(s))^q\omega(s)^{-\frac{q}{2}}d\mu(s)\right)^{p/p-q}d\mu(z) < \infty,
$$
we obtain 
\begin{align*}
	\int_{\D} &F_{\delta,\mu}(\varphi)(z)^{p/(p-q)}\,dA(z)
	=\int_{\D}\left(\frac{1}{\tau(z)^2}\int_{D_\delta(z)}(1+\varphi'(\xi))^q\omega(\xi)^{-\frac{q}{2}}\,d\mu(\xi)\right)^{p/(p-q)}\,dA(z)\\
	&\lesssim \int_{|z|<r_1}\left(\frac{1}{\tau(z)^2} \int_{D_\delta(z)} (1+\varphi'(s))^q\omega(s)^{-\frac{q}{2}}d\mu(s)\right)^{p/p-q}dA(z) \\
	&+ \int_{|z|>r_1}\left(\frac{1}{\tau(z)^2} \int_{D_\delta(z)} (1+\varphi'(s))^q\omega(s)^{-\frac{q}{2}}d\mu(s)\right)^{p/p-q}dA(z)<\infty.
\end{align*}
This proves the desired result.

Finally, it remains to prove that (c) implies (b). Suppose that \eqref{eq3_1} holds and let  $\{f_n\}$ be a bounded sequence of functions belonging to $S^p_\omega$ that converges uniformly to zero on compact subsets of $\D$. Since the function $\tau$ is decreasing  and converges to zero as $|z|\to 1$, there is  $r'>0$ such that 
\begin{equation}\label{eq-eq}
	D_{\delta/2}(z)\subset \Big\{\xi\in \D: |\xi|>r/2\Big\}, \quad \textrm{ if}\,\,\,  |z|>r>r'.
\end{equation}
On the other hand, it also follows from  \eqref{Eq-gamma} that 
$$
|f_n(z)|^q\,\frac{\omega(z)^{q/2}}{(1+\varphi'(z))^q} \lesssim \frac{1}{\tau(z)^2}\int_{D_{\delta}(z)}|f_n(s)|^q\,\frac{\omega(s)^{q/2}}{(1+\varphi'(s))^q}\,dA(s).
$$
Integrate with respect to $d\mu$, and use \eqref{eq-eq} and \eqref{eqn:asymptau} to obtain 
\begin{equation}\label{eq-zeq1}
\begin{aligned}
	&\int_{|z|\geq r} |f_n(\xi)|^q\,d\mu(\xi)\\
	&\lesssim \int_{|\xi|\geq r/2}|f_n(\xi)|^q\,\frac{\omega(\xi)^{q/2}}{(1+\varphi'(\xi))^q}\left( \frac{1}{\tau(\xi)^{2}}\int_{D_{\delta}(\xi)} (1+\varphi'(z))^q\,\omega(z)^{-q/2}\,d\mu(z)\right)dA(\xi)
\end{aligned}
\end{equation}
By (c), for each $\varepsilon >0$, there is an $r_0>r'$ such that 
$$
	\int_{|\xi|\geq r_0/2}\left( \frac{1}{\tau(\xi)^{2}}\int_{D_{\delta}(\xi)} (1+\varphi'(z))^q\,\omega(z)^{-q/2}\,d\mu(z)\right)^{p/(p-q)}dA(\xi)< \varepsilon^{p/(p-q)}.
$$
Combining this with H\"older's inequality, we have
\begin{equation}
\begin{aligned}
	&\int_{|z|\geq r_0} |f_n(\xi)|^q\,d\mu(\xi)\\
	&\lesssim\|f_n\|^{q} _{S^p_\omega}\left( \int_{|\xi|\geq r_0/2}\left( \frac{1}{\tau(\xi)^{2}}\int_{D_{\delta}(\xi)} (1+\varphi'(z))^q\,\omega(z)^{-q/2}\,d\mu(z)\right)^{p/(p-q)}dA(\xi)\right)^{(p-q)/p}
	\lesssim\varepsilon.
\end{aligned}
\end{equation}
This together with the fact that 
$$
	\lim_{n\to \infty}\int_{|z|\le r_0} |f_n(\xi)|^q\,d\mu(\xi)=0
$$
gives $\lim_{n\to \infty}\|f_n\| _{L^q(\mu)}=0$, which completes the proof.
\end{proof}

We finish this section with the case $0 < q < \infty$ and $p =\infty$.

\begin{lem} \label{lem3} 
Let $\omega \in \mathcal{W}$, $0<q<\infty$, and $\mu$ be a finite positive Borel measure on $\D$. Then the following statements are equivalent;
\begin{enumerate} 
\item[(a)] The operator $I_{\mu}: S^\infty_\omega\rightarrow L^q(\D,d\mu)$  is bounded.
\item[(b)] The operator $I_{\mu}: S^\infty_\omega\rightarrow L^q(\D,d\mu)$  is compact.
\item[(c)] The function 
\begin{equation}
	F_{\delta,\mu}(\varphi) \in L^{1}(\D,dA).
\end{equation}
\end{enumerate}
\end{lem}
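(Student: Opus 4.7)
The plan is as follows. The implication (b)$\Rightarrow$(a) is immediate. Before tackling the other implications, I would record a Fubini identity that streamlines everything: interchanging the order of integration and using \eqref{eqn:asymptau} (the set $\{z\in\D:\xi\in D_\delta(z)\}$ has area comparable to $\tau(\xi)^2$, and $\tau(z)\asymp\tau(\xi)$ there) gives
\begin{equation*}
\int_\D F_{\delta,\mu}(\varphi)(z)\,dA(z)\asymp\int_\D(1+\varphi'(\xi))^q\omega(\xi)^{-q/2}\,d\mu(\xi)=:\nu(\D).
\end{equation*}
Thus (c) is equivalent to the finiteness of the measure $d\nu:=(1+\varphi')^q\omega^{-q/2}\,d\mu$ on $\D$.

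For (c)$\Rightarrow$(b), the definition of the $S^\infty_\omega$ norm yields the pointwise bound $|f(z)|^q\le\|f\|_{S^\infty_\omega}^q(1+\varphi'(z))^q\omega(z)^{-q/2}$, hence $\|f\|_{L^q(\mu)}^q\le\|f\|_{S^\infty_\omega}^q\nu(\D)$, which already gives boundedness. If $\{f_n\}\subset S^\infty_\omega$ is bounded and converges to $0$ uniformly on compact subsets of $\D$, then splitting $\int|f_n|^q\,d\mu$ into integrals over $\{|z|\le r\}$ and $\{|z|>r\}$, the tail is dominated by $\|f_n\|^q_{S^\infty_\omega}\nu(\{|z|>r\})$, which is small for $r$ close to $1$ since $\nu(\D)<\infty$, while the compact part vanishes as $n\to\infty$. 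Hence $\|f_n\|_{L^q(\mu)}\to 0$ and $I_\mu$ is compact.

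For the nontrivial direction (a)$\Rightarrow$(c), I would mimic the Khinchine--Luecking scheme of Lemma \ref{lem2}, with $\ell^\infty$ in place of $\ell^p$. Fix $n\ge 2$ and a $(\delta,\tau)$-lattice $\{z_k\}$. For $\lambda=\{\lambda_m\}\in\ell^\infty$, Proposition \ref{At1-pp}(b) together with \eqref{littleW1} yields $F':=\bigl(\sum_{|z_m|\ge\rho_0}\lambda_m F_{z_m,n}\bigr)'\in S^\infty_\omega$ with $\|F'\|_{S^\infty_\omega}\lesssim\|\lambda\|_{\ell^\infty}$. Replacing $\lambda_m$ by the Rademacher-randomised $R_m(t)\lambda_m$ (whose $\ell^\infty$ norm equals $\|\lambda\|_{\ell^\infty}$), applying the assumed boundedness of $I_\mu$, integrating in $t\in[0,1]$, swapping integrals, and invoking Khinchine's inequality (Lemma \ref{Khin}) gives
\begin{equation*}
\int_\D\Bigl(\sum_m|\lambda_m|^2|F'_{z_m,n}(z)|^2\Bigr)^{q/2}d\mu(z)\lesssim\|I_\mu\|^q\|\lambda\|^q_{\ell^\infty}.
\end{equation*}
Comparing this $\ell^2$-sum with the corresponding $\ell^q$-sum on the pieces $D_{3\delta}(z_m)$ (using $\sum a_m^{q/2}\le(\sum a_m)^{q/2}$ when $q\ge 2$, and a H\"older step combined with the finite multiplicity of $\{D_{3\delta}(z_m)\}$ when $q<2$), then using Lemma \ref{BL3} to replace $|F'_{z_m,n}|$ by $(1+\varphi')\omega^{-1/2}$ on each disc, this reduces to
\begin{equation*}
\sum_m|\lambda_m|^q\int_{D_{3\delta}(z_m)}(1+\varphi'(\xi))^q\omega(\xi)^{-q/2}\,d\mu(\xi)\lesssim\|I_\mu\|^q\|\lambda\|^q_{\ell^\infty}.
\end{equation*}
Specialising to the constant sequence $\lambda_m\equiv 1$ (of unit $\ell^\infty$-norm) and using finite multiplicity one final time yields $\nu(\D)<\infty$, which by the Fubini identity of the first step is precisely $F_{\delta,\mu}(\varphi)\in L^1(\D,dA)$.

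The main technical obstacle I anticipate is running the Khinchine reduction cleanly at $p=\infty$: for finite $p$ the analogous argument finishes with a duality step $(\ell^{p/q})^*=\ell^{p/(p-q)}$, which is unavailable here. The way around this is to exploit the $\ell^\infty$-input directly by plugging in $\lambda_m\equiv 1$, which also explains why the exponent in (c) collapses to $1$---the formal limit of $p/(p-q)$ as $p\to\infty$.
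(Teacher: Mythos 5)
Your proof is correct, and its hard half, the implication (a) $\Rightarrow$ (c), is essentially the paper's argument: the atomic decomposition from Proposition \ref{At1-pp}(b) combined with \eqref{littleW1}, Rademacher randomisation and Khinchine's inequality, the $q\ge 2$ versus $q<2$ comparison of the $\ell^2$- and $\ell^q$-sums, Lemma \ref{BL3} on each $D_{3\delta}(z_m)$, and finally the choice $\lambda_m\equiv 1$ in place of the duality step. Where you genuinely diverge is in the treatment of condition (c) and the implication (c) $\Rightarrow$ (b). Your Fubini identity $\int_\D F_{\delta,\mu}(\varphi)\,dA\asymp \nu(\D)$ with $d\nu=(1+\varphi')^q\omega^{-q/2}\,d\mu$ is valid (for $\delta<m_\tau$ the set $\{z:\xi\in D_\delta(z)\}$ is squeezed between two discs of radius $\asymp\delta\tau(\xi)$ by the Lipschitz property \eqref{e:B}, and $\tau(z)\asymp\tau(\xi)$ there), and it reduces (c) to finiteness of the measure $\nu$; the trivial pointwise bound $|f(z)|\le\|f\|_{S^\infty_\omega}(1+\varphi'(z))\omega(z)^{-1/2}$ then gives boundedness and, via the tail estimate $\nu(\{|z|>r\})\to 0$, compactness. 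The paper instead runs (c) $\Rightarrow$ (b) through the sub-mean-value inequality \eqref{Eq-gamma} and a lattice decomposition, exactly as in Lemmas \ref{lem1} and \ref{lem2}; your route is more elementary and makes transparent why the exponent in (c) collapses to $1$ at $p=\infty$. Two cosmetic remarks: in the final step of (a) $\Rightarrow$ (c) what you need is the covering property $\bigcup_m D_{3\delta}(z_m)\supset\{|z|\ge r_1\}$ (not finite multiplicity), together with the observation that $\nu$ of the remaining compact set is finite because $\mu$ is finite and $(1+\varphi')^q\omega^{-q/2}$ is bounded there; and your displays omit the factor $\omega(z)^{q/2}$ that the paper carries inside the Khinchine estimate, but since your subsequent use of Lemma \ref{BL3} replaces $|F'_{z_m,n}|^q$ by $(1+\varphi')^q\omega^{-q/2}$ rather than by $(1+\varphi')^q$, your bookkeeping is internally consistent with the unweighted definition of $L^q(\D,d\mu)$ used in Lemma \ref{lem1}.
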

\begin{proof}
Suppose first that the operator $I_{\mu}: S^\infty_\omega\rightarrow L^q(\D,d\mu)$  is bounded. Let  $\{z_m\}_m$ be a $(\delta,\tau)$-lattice on $\D.$  Corresponding to each $\lambda=\{\lambda_m\}_m \in \ell^\infty, $ we consider again
$$
	f(z)= \sum_{|z_m|\geq \rho_0}\lambda_{m} F_{z_m,n}(z),
$$
where $F_{z_m,n}(z)$ is  in Lemma \ref{Borichevlemma}. By Proposition \ref{At1-pp} and (\ref{littleW1}) , we have $$\|f'\|_{S^\infty_\omega}\leq\|f\|_{A^\infty_{\om}}\lesssim \|\lambda\|_{\ell^\infty}.$$ By our assumption, we get 
$$
	\int_{\D}\Big|\sum_{|z_m|\geq\rho_0}\lambda_m F'_{z_m,n}(z)\Big|^q \, \om(z)^{q/2}\, d\mu(z)
	\lesssim\|\lambda\|^q_{\ell^\infty}
$$
and so
\begin{equation*}
	\int_{\D}\left(\sum_{\substack{|z_m|\geq\rho_0}}|\lambda_m|^2 |F'_{z_m,n}(z)|^2\, \om(z)\right)^{q/2}\, d\mu(z)\lesssim\|\lambda\|^q_{\ell^\infty}.
\end{equation*}
This together with Lemma \ref{lem:Lcoverings}, Lemma \ref{BL3} and  H\"older's inequality  imply that 
\begin{equation*}
\begin{aligned}
	\sum_{\substack{|z_m|\geq\rho_0}}|\lambda_m|^q\,&\int_{D_{3\delta}(z_m)} (1+\varphi'(\xi))^q\omega(\xi)^{-\frac{q}{2}}\,d\mu(\xi)\\
	&\lesssim \max(1,N^{1-\frac{q}{2}})\int_{\D}\Big(\sum_{\substack{|z_m|\geq\rho_0}}|\lambda_m|^2\,|F'_{z_m,n}(\xi)|^2 \, \omega(\xi)\Big)^{q/2}\,d\mu(z)
	\lesssim\|\lambda\|^q_{\ell^\infty}.
\end{aligned}
\end{equation*}
Then, taking  $|\lambda_m| = 1$ gives  
\begin{equation}\label{eq:rad1}
\begin{aligned}
	\sum_{\substack{|z_m|\geq\rho_0}}\,\int_{D_{3\delta}(z_m)} (1+\varphi'(\xi))^q\omega(\xi)^{-\frac{q}{2}}\,d\mu(\xi)
\lesssim 1.
\end{aligned}
\end{equation}
As in the previous proof, by Lemma~\ref{lem:Lcoverings}, \eqref{eqn:asymptau} and \eqref{eq:rad1}, we get
\begin{equation*}
\begin{aligned}
	&\int_{|z|\geq r_1}\left(\frac{1}{\tau(z)^{2}}\int_{D_{\delta}(z)} (1+\varphi'(\xi))^q\omega(\xi)^{-q/2}\,d\mu(\xi)\right)dA(z)\\
	&\lesssim \sum_{\substack{|z_m|\geq\rho_0}}\int_{D_{\delta}(z_m)}\left(\frac{1}{\tau(z)^{2}}\int_{D_{\delta}(z)} (1+\varphi'(\xi))^q\omega(\xi)^{-q/2}\,d\mu(\xi)\right)dA(z)\\
	&\lesssim \sum_{\substack{|z_m|\geq\rho_0}}\int_{D_{3\delta}(z_m)} (1+\varphi'(\xi))^q\omega(\xi)^{-q/2}\,d\mu(\xi)<\infty.
\end{aligned}
\end{equation*}
Combining this with the fact that 
$$
\int_{|z|\le r_1}\left(\frac{1}{\tau(z)^{2}}\int_{D_{\delta}(z)} (1+\varphi'(\xi))^q\omega(\xi)^{-q/2}\,d\mu(\xi)\right)dA(z)<\infty,
$$
we have the desired result---see \eqref{F-delta-mu}.

It remains to show that (c) implies (b). Let   $\{f_n\}$ be a bounded sequence of functions in $S^\infty_\omega$ converging uniformly to zero on compact subsets of $\D.$  Since the function $\tau(z)$ is decreasing  and converges  to zero as $|z|\to 1,$ there is $r'>0$ such that 
\begin{equation}\label{eq-zeq}
D_{\delta/2}(z)\subset \Big\{\xi\in \D: |\xi|>r/2\Big\}, \quad \textrm{ if}\,\,\,  |z|>r>r'.
\end{equation}
On the other hand, it follows from  \eqref{Eq-gamma} that 
$$
|f_n(z)|^q\,\frac{\omega(z)^{q/2}}{(1+\varphi'(z))^q} \lesssim \frac{1}{\tau(z)^2}\int_{D_{\delta}(z)}|f_n(s)|^q\,\frac{\omega(s)^{q/2}}{(1+\varphi'(s))^q}\,dA(s).
$$
Integrate with respect to $d\mu,$ and use \eqref{eq-zeq},  \eqref{eqn:asymptau}, and \eqref{Eq-gamma},  to obtain 
\begin{equation}
\begin{aligned}
	&\int_{|z|\geq r} |f_n(\xi)|^q\,d\mu(\xi)\\
	&\lesssim \int_{|\xi|\geq r/2}|f_n(\xi)|^q\,\frac{\omega(\xi)^{q/2}}{(1+\varphi'(\xi))^q}\left( \frac{1}{\tau(\xi)^{2}}\int_{D_{\delta}(\xi)} (1+\varphi'(z))^q\,\omega(z)^{-q/2}\,d\mu(z)\right)dA(\xi)\\
	&\lesssim\|f_n\| ^q_{S^\infty_\omega}\int_{|\xi|\geq r/2} \frac{1}{\tau(\xi)^{2}}\int_{D_{\delta}(\xi)} (1+\varphi'(z))^q\,\omega(z)^{-q/2}\,d\mu(z)dA(\xi).
\end{aligned}
\end{equation}
Now the rest follows as in the previous proof.
\end{proof}

\section{Proofs of Theorems~\ref{thm1} and \ref{thmV1}}\label{proofs1}

\subsection{Proof of Theorem~\ref{thm1} (A)} Let $0<p\leq q<\infty$. By \eqref{littleW}, 
\begin{align*}
	\|GI_{(\phi,g)} f(z)\|_{A_\omega^q}^q&\asymp \int_{\mathbb{D}} | f'(\phi(z))|^q\, |g(z)|^q\, \frac{\omega(z)^{q/2}}{(1+{\varphi}^{'}(z))^q}\,dA(z)\\
	&=  \int_{\mathbb{D}} | f'(z)|^q\, d\mu_{\phi,\omega,q}(z)=\|f'\|^q_{L^q(\mathbb{D},d\mu_{\phi,\omega,g})}.
\end{align*}
Therefore, $GI_{(\phi,g)}: A^p_\omega\rightarrow A^q_\omega$ is bounded if and only if $I_{\mu_{\phi,\omega,g}}: S^p_\omega\rightarrow L^q(\mu_{\phi,\omega,g})$ is bounded. Using (a) of Lemma \ref{lem1},  this is equivalent to 
$$
	\sup_{z\in \mathbb{D}}\,\frac{1}{\tau(z)^{2q/p}}\int_{D_\delta(z)} (1+{\varphi}^{'}(\xi))^q\, \omega(\xi)^{-q/2}\,d\mu_{\phi,\omega,g}(\xi)<\infty,
$$
which, by Theorem \ref{thm:CMPQ}, is equivalent to 
$$
	\sup_{z\in \D}\tau(z)^{2(1-q/p)}\int_{\D} |k_{q,z}(\xi)|^q\omega(\xi)^{q/2}\,d\nu_{\phi,\omega,g}(\xi)<\infty.
$$
Now, by $(a)$ of Lemma \ref{lem:RK-PE1},  we get 
\begin{equation}
\begin{aligned}\label{maincond1}
	\tau(z)^{2(1-q/p)}\int_{\mathbb{D}} |k_{q,z}(\xi)|^q\, \omega(\xi)^{q/2} \,dv_{\phi,\omega,q}(\xi) 
	&\asymp \int_{\mathbb{D}} |k_{p,z}(\xi)|^q\, \omega(\xi)^{q/2} \,dv_{\phi,\omega,q}(\xi)\\&=  \int_{\mathbb{D}} |k_{p,z}(\xi)|^q\, (1+{\varphi}^{'}(\phi(\xi))^q\,d\mu_{\phi,\omega,q}(\xi)\\
	&=   \int_{\mathbb{D}} |k_{p,z}(\xi)|^q\, |g(z)|^q\,  \frac{(1+{\varphi}^{'}(\phi(\xi)))^q}{(1+{\varphi}^{'}(\xi))^q}\,\omega(z)^{q/2}\,dA(\xi)\\&=GB^{\phi}_{1,p,q}.
\end{aligned}
\end{equation}
Thus, $GI_{\phi, g}$ is bounded if and only if $GB^{\phi}_{1,p,q} (g(z)) \in L^{\infty}(\mathbb{D},dA)$. Compactness can be proved similarly using (b) of Lemma~\ref{lem1}.

\subsection{Proof of Theorem~\ref{thm1} (B)} {\bf Boundedness.}
Let $0<p<q=\infty$ and suppose first that  \eqref{eq-G2} holds. {Then by \eqref{littleW1} and our assumption,} we have 
\begin{align*}
	\|GI_{\phi,g} f\|_{A_{\omega}^{\infty}}
	&\asymp   \sup_{z\in\mathbb{D}} |f'(\phi(z))| \,|g(z)|\,\frac{\omega(z)^{1/2}}{(1+{\varphi'}(z))}\\
	&\leq  \sup_{z\in \D} M_{g,\phi,\om}(z)\,\sup_{z\in \D}\frac{|f'(\phi(z))|\,\omega(\phi(z))^{\frac{1}{2}}}{(1+\varphi'(\phi(z)))}\,\Delta\varphi(\phi(z))^{-1/p}\\
	&\le \sup_{z\in \D} M_{g,\phi,\om}(z)\,\sup_{z\in \D}\frac{|f'(\phi(z))|\,\omega(\phi(z))^{\frac{1}{2}}}{(1+\varphi'(\phi(z)))}\,\tau(\phi(z))^{2/p}. 
\end{align*}
Therefore, by \eqref{Eq-gamma},
\begin{align*}
	\|GI_{(\phi,g)}f\|_{A^\infty_{\omega}}
	&\lesssim \sup_{z\in \D}\left(\int_{D_{\delta}(\phi(z))}\frac{|f'(\xi)|^p\omega(\xi)^{\frac{p}{2}}}{(1+\varphi'(\xi))^p}\, dA(\xi)\right)^{1/p}\\
	&\le\left(\int_{\D}\frac{|f'(\xi)|^p\omega(\xi)^{\frac{p}{2}}}{(1+\varphi'(\xi))^p}\, dA(\xi)\right)^{1/p}
{\lesssim}\, \, \|f\|_{A^p_{\omega}},
\end{align*}
which implies that $GI_{(\phi,g)}:A^{p}_{\omega} \to A^{q}_{\omega}$ is is bounded.

Conversely, suppose that the operator $GI_{(\phi,g)}: A^p_\omega\rightarrow A^\infty_\omega$ is bounded. Choose $\xi\in\D$ so that $|\phi(\xi)|>\rho_0$, and consider the function $f_{\phi(\xi),n,p}$ given by  
$$
	f_{\phi(\xi),n,p}:=\frac{F_{\phi(\xi),n,p}}{\tau(\phi(\xi))^{2/p}},  
$$
where $ F_{\phi(\xi),n,p}$ is the test function in Lemma \ref{Borichevlemma}. Notice that $f_{\phi(\xi),n,p}$ is in $A^p_{\omega}$ and $\|f_{\phi(\xi),n,p}\|_{A^p_{\omega}}\asymp 1$. By our assumption, we get 
\begin{align*}
	\|GI_{(\phi,g)}(f_{\phi(\xi),n,p})\|_{A^\infty_{\om}}&\geq  \sup_{z\in \D}\frac{|f'_{\phi(\xi),n,p}(z)||g(z)|}{(1+\varphi'(z))}\,\omega(z)^{\frac{1}{2}}\\
	&\geq\sup_{z\in \D}\frac{|F'_{\phi(\xi),n,p}(z)||g(z)|}{\tau(\phi(\xi))^{2/p}(1+\varphi'(z))}\,\omega(z)^{\frac{1}{2}}\\
	&\geq\sup_{\xi \in \D}\frac{|F'_{\phi(\xi),n,p}(\phi(\xi))||g(\xi)|}{\tau(\phi(\xi))^{2/p}(1+\varphi'(\xi))}\,\omega(\xi)^{\frac{1}{2}}.
\end{align*}
By Lemma \ref{BL3},
$$
	|F'_{\phi(\xi),n,p}(z)|\, \omega(z)^{1/2}\asymp (1+\varphi'(z)),\ z\in D_\delta(\phi(\xi)),
$$
and so
\begin{equation}\label{eq-M}
\begin{split}
	\infty >\|GI_{(\phi,g)}(f_{\phi(\xi),n,p})\|_{A^\infty_{\om}}
	&\geq|g(\xi)|\,\frac{(1+\varphi'(\phi(\xi)))}{(1+\varphi'(\xi))}\,\frac{\omega(\xi)^{\frac{1}{2}}}{\omega(\phi(\xi))^{\frac{1}{2}}}\,\tau(\phi(\xi))^{-2/p}\\
	&\asymp |g(\xi)|\,\frac{(1+\varphi'(\phi(\xi)))}{(1+\varphi'(\xi))}\,\frac{\omega(\xi)^{\frac{1}{2}}}{\omega(\phi(\xi))^{\frac{1}{2}}}\,\Delta\varphi(\phi(\xi))^{1/p}\\&=M_{g,\phi,\om}(\xi).
\end{split}
\end{equation}
On the other hand, by taking $f(z)=z$ and using the boundedness of the operator $GI_{(\phi,g)}: A^p_\omega\rightarrow A^\infty_\omega$, we obtain
$$
	\|GI_{(\phi,g)}f\|_{A^\infty_{\om}}
	=\sup_{z\in \D}\frac{|g(z)|}{(1+\varphi'(z))}\,\,\omega(z)^{\frac{1}{2}}\lesssim \|f\|_{A^p_{\om}} <\infty.
$$
Therefore, in the case of $|\phi(\xi)|\le\rho_0, \, \xi \in \D,$ we have 
\begin{align*}
	|M_{g,\phi,\om}(\xi)|
	&=|g(\xi)|\,\frac{(1+\varphi'(\phi(\xi)))}{(1+\varphi'(\xi))}\,\frac{\omega(\xi)^{\frac{1}{2}}}{\omega(\phi(\xi))^{\frac{1}{2}}}\,\Delta\varphi(\phi(\xi))^{1/p}\\
	&\asymp |g(\xi)|\,\frac{(1+\varphi'(\phi(\xi)))}{(1+\varphi'(\xi))}\,\frac{\omega(\xi)^{\frac{1}{2}}}{\omega(\phi(\xi))^{\frac{1}{2}}}\,\tau(\phi(\xi))^{-2/p}\\
	&\le C_1 \,\frac{|g(\xi)|}{(1+\varphi'(\xi))}\,\,\omega(\xi)^{\frac{1}{2}}<\infty,
\end{align*}
where
$$
	C_1 = \sup_{|\phi(\xi)|\le \rho_0}\Big\{(1+\varphi'(\phi(\xi)))\omega(\phi(\xi))^{\frac{-1}{2}} \,\tau(\phi(\xi))^{-2/p}\Big\}<\infty.
$$
Combining this with \eqref{eq-M} completes the proof of boundedness.

{\bf Compactness.} Suppose now that $GI_{(\phi,g)}: A^p_\omega\rightarrow A^\infty_\omega$ is compact. Then, since $f_{\phi(\xi),n,p}$ converges to zero uniformly on compact subsets of $\D$ as $|\phi(\xi)|\to 1$ (see Lemma 3.1 of \cite{PP1}), it follows that
$$
	\|GI_{(\phi,g)}(f_{\phi(\xi),n,p})\|_{A^\infty_{\om}}\rightarrow 0
$$
as $|\phi(\xi)|\to 1$. Thus, by \eqref{eq-M},
\begin{equation*}
	0=\lim_{|\phi(\xi)|\to1^-}\|GI_{(\phi,g)}(f_{\phi(\xi),n,p})\|_{A^\infty_{\om}} \gtrsim\lim_{|\phi(\xi)|\to1^-} M_{g,\phi,\om}(\xi).
\end{equation*}

To prove the converse, let $\{f_n\}$ be a bounded sequence of functions in $A^p_\omega$ converging uniformly to zero on compact subsets of $\D$. Since \eqref{eq-G2} holds, for each $\varepsilon >0$, there exists an $r_0>0$ such that 
$$
	M_{g,\phi,\om}(\xi)
	=|g(\xi)|\,\frac{1+\varphi'(\phi(\xi)}{1+\varphi'(\xi)}\frac{\omega(\xi)^{1/2}}{\omega(\phi(\xi))^{1/2}}\Delta\varphi(\phi(\xi))^{1/p}
	<\varepsilon,
$$
whenever $|\phi(\xi)|>r_0.$ In addition, by \eqref{Eq-gamma}, 
\begin{equation}\label{eq-ext1}
\begin{split}
	&\frac{|f_n'(\phi(\xi))||g(\xi)|}{1+\varphi'(\xi)}\,\,\omega(\xi)^{\frac{1}{2}}\\
	&\quad\lesssim \Big(\frac{1}{\tau(z)^2}\int_{D_{\delta}(z)} \frac{|f_n'(\phi(s))|^p}{(1+\varphi'(\phi(s)))^p}\,\,\omega(\phi(s))^{\frac{p}{2}}\,\Delta\varphi(\phi(\xi))\,dA(s)\Big)^{1/p}   M_{g,\phi,\om}(\xi) \\
	&\quad\lesssim \|f_n\|_{A^p_\omega} M_{g,\phi,\om}(\xi) < \varepsilon,
\end{split}
\end{equation}
whenever $|\phi(\xi)|>r_0$.

For $|\phi(\xi)| \ge r_0$, we have
\begin{equation*}
	\sup_{|\phi(\xi)|\le r_0} \frac{|f_n'(\phi(\xi))||g(\xi)|}{1+\varphi'(\xi)}\,\,\omega(\xi)^{\frac{1}{2}}
	\lesssim \sup_{|\phi(\xi)|\le r_0} |f_n'(\phi(\xi))|\rightarrow 0,
\end{equation*}
as $n\to \infty$ because the sequence of functions $f'_n$ also converges uniformly to zero on compact subsets of $\D$ (see Lemma \ref{DERI}). This together with \eqref{eq-ext1} yields 
$$
	\|GI_{(\phi,g)}(f_n)\|_{A^\infty_{\om}}
	{\asymp}\sup_{\xi\in \D}\frac{|f_n'(\phi(z))||g(\xi)|}{1+\varphi'(\xi)}\,\,
	\omega(\xi)^{\frac{1}{2}}\rightarrow 0,\quad n\to \infty,
$$
which shows the compactness of the operator $GI_{(\phi,g)}: A^p_\omega\rightarrow A^\infty_\omega$.

\subsection{Proof of Theorem~\ref{thm1} (C)} {\bf Boundedness.}
Let $p=q=\infty$ and suppose that $\eqref{eq-G3-NI}$ holds. Using \eqref{littleW1}, we get 
\begin{align*}
	\|GI_{(\phi,g)}f\|_{A^\infty_{\om}}
	&\asymp\sup_{z\in \D} \frac{|f'(\phi(z))||g(z)|}{(1+\varphi'(z))}\,\,\omega(z)^{\frac{1}{2}}\\
	&\le \sup_{z\in \D} N_{g,\phi,\omega}(z)\,\sup_{z\in \D}\frac{|f'(\phi(z))|\omega(\phi(z))^{\frac{1}{2}}}{(1+\varphi'(\phi(z)))}\\
	&\le \sup_{z\in \D} N_{g,\phi,\omega}(z)\,\sup_{z\in \D}\frac{|f'(z)|\omega(z)^{\frac{1}{2}}}{(1+\varphi'(z))}\lesssim  \|f\|_{A^\infty_\omega}
\end{align*}
which shows that $GI_{(\phi, g)}$ is bounded.

Conversely, suppose that $GI_{(\phi,g)}: A^\infty_\omega\rightarrow A^\infty_\omega$ is bounded. Let $\xi\in\D$ be such that $|\phi(\xi)|>\rho_0$. Then $F_{\phi(\xi),n,p} \in A^\infty_{\om}$ and $\|F_{\phi(\xi),n,p}\|_{A^\infty_{\om}}\asymp 1$, and hence
\begin{equation*}
	\infty >\|GI_{(\phi,g)}(F_{\phi(\xi),n,p})\|_{A^\infty_{\om}}
	=\sup_{z\in \D}\frac{|F'_{\phi(\xi),n,p}(z)||g(z)|}{(1+\varphi'(z))}\,\omega(z)^{\frac{1}{2}}\geq\frac{|F'_{\phi(\xi),n,p}(\phi(\xi))||g(\xi)|}{(1+\varphi'(\xi))}\,\omega(\xi)^{\frac{1}{2}}.
\end{equation*}
By Lemma \ref{BL3},
$$
	|F'_{\phi(\xi),n,p}(z)| \, \omega(z)^{1/2}\asymp (1+\varphi'(z)),\ z\in D_\delta(\phi(\xi)),
$$
so
\begin{equation}\label{eq-M1}
	\infty >\|GI_{(\phi,g)}(F_{\phi(\xi),n,p})\|_{A^\infty_{\om}}
	\asymp |g(\xi)|\,\frac{(1+\varphi'(\phi(\xi)))}{(1+\varphi'(\xi))}\,\frac{\omega(\xi)^{\frac{1}{2}}}{\omega(\phi(\xi))^{\frac{1}{2}}}=NI_{g,\phi,\omega}(\xi).
\end{equation}

To deal with the case $|\phi(\xi)|\le\rho_0$, take $f(z)=z$ and use the boundedness of the operator $GI_{(\phi,g)}$ to obtain
\begin{equation}\label{eq-ext}
	\|GI_{(\phi,g)}f\|_{A^\infty_{\om}} =\sup_{z\in \D}\frac{|g(z)|}{(1+\varphi'(z))}\,\,\omega(z)^{\frac{1}{2}}\lesssim \|f\|_{A^\infty_{\om}} <\infty.
\end{equation}
Therefore, when $|\phi(\xi)|\le\rho_0, \, \xi \in \D,$ we have 
\begin{equation*}
	|g(\xi)|\,\frac{(1+\varphi'(\phi(\xi)))}{(1+\varphi'(\xi))}\,\frac{\omega(\xi)^{\frac{1}{2}}}{\omega(\phi(\xi))^{\frac{1}{2}}}
	\le C_2 \,\frac{|g(\xi)|}{(1+\varphi'(\xi))}\,\,\omega(\xi)^{\frac{1}{2}}<\infty,
\end{equation*}
where $$C_2 = \sup_{|\phi(\xi)|\le \rho_0}\Big\{(1+\varphi'(\phi(\xi)))\omega(\phi(\xi))^{\frac{-1}{2}} \Big\}<\infty.$$
Combining this with \eqref{eq-M1} completes the proof of boundedness. 

{\bf Compactness.} If $GI_{(\phi,g)}: A^\infty_\omega\rightarrow A^\infty_\omega$ is compact, then, using~\eqref{eq-M1} again, we get
$$
	\lim_{|\phi(\xi)|\to1^-} NI_{g,\phi,\om}(\xi) 
	\lesssim \lim_{|\phi(\xi)|\to1^-}\|GI_{(\phi,g)}(f_{\phi(\xi),n,p})\|_{A^\infty_{\om}} = 0.
$$

To prove the converse, let $\{f_n\}$ be a bounded sequence of functions in $A^\infty_\omega$ converging uniformly to zero on compact subsets of $\D.$ By assumption, for any $\varepsilon >0,$ there exists $r_0>0$ such that 
$$
	NI_{g,\phi,\om}(\xi)
	=|g(z)|\, \,\frac{(1+\varphi'(\phi(z))}{(1+\varphi'(z))}
	\frac{\omega(z)^{1/2}}{\omega(\phi(z))^{1/2}} <\varepsilon,
$$
whenever $|\phi(\xi)|>r_0.$  Notice that 
\begin{equation}\label{eq-et1}
\begin{split}
	\frac{|f_n'(\phi(\xi))||g(\xi)|}{1+\varphi'(\xi)}\,\,\omega(\xi)^{\frac{1}{2}}
	&\lesssim \|f_n\|_{A^\infty_{\om}}\,
|g(\xi)|\,\frac{(1+\varphi'(\phi(\xi))}{(1+\varphi'(\xi))}\frac{\omega(\xi)^{1/2}}{\omega(\phi(\xi))^{1/2}} \\
	&= \|f_n\|_{A^\infty_{\om}}\, N_{g,\phi,\om}(\xi) < \varepsilon,
\end{split}
\end{equation}
whenever $|\phi(\xi)|>r_0.$ The rest follows as in the proof of (B).

\subsection{Proof of Theorem~\ref{thm1} (D)} Let $0<q<p< \infty$ and suppose that $GI_{(\phi,g)}: A^p_\omega\rightarrow A^q_\omega$ is bounded. If $\{f_n\}\subset A^p_\omega$ is a bounded sequence converging  to zero uniformly on compact subsets of $\D$, then 
\begin{equation}\label{ppqwer}
	\|GI_{(\phi,g)}f_n\|^q_{A^q_{\om}}
	\asymp \int_{\D}\frac{|f'_n(\phi(z))|^q|g(z)|^q}{(1+\varphi'(z))^q}\,\,\omega(z)^{\frac{q}{2}}\, dA(z)=\|f'_n\|^q_{L^q(\mu_{\phi,\omega,g})},
\end{equation}
which goes to zero as $n\to\infty$ because of the compactness of  the embedding $I_{\mu_{\phi,\omega,g}}$.

We next prove that (a) and (c) are equivalent. By \eqref{ppqwer} and Lemma  \ref{lem2}, we get $GI_{(\phi,g)}: A^p_\omega\rightarrow A^q_\omega$ is bounded if and only if $I_{\mu_{\phi,\omega,g}}: S^p_\omega\rightarrow L^q(\mu_{\phi,\omega,g})$ is bounded if and only if $I_{\mu_{\phi,\omega,g}}: S^p_\omega\rightarrow L^q(\mu_{\phi,\omega,g})$ is compact if and only if the function 
$$
	F_{\delta,\mu_{\phi,\omega,g}}(\varphi)(z):= \frac{1}{\tau(z)^{2}}\int_{D_{\delta}(z)} (1+\varphi'(\xi))^q\omega(\xi)^{-q/2}\,d\mu_{\phi,\omega,g}(\xi)
$$
belongs to $ L^{p/(p-q)}(\D,dA).$ By Theorem \ref{thm:CMOP}, this is equivalent to 
$$
	\int_{\D}|k_{q,z}(\xi)|^q\, \omega(\xi)^{q/2}\,d\nu_{\phi,\omega,g}(\xi) \in  L^{p/(p-q)}(\D,dA),
$$
which is as well equivalent to $GB^\phi_{1,p,q}(g)(z)\in L^{p/(p-q)}(\D,d\lambda)$, where $d\lambda(z)=dA(z)/\tau(z)^2,$ because of 
\begin{align*}
	\int_{\D}  G_{q}(\nu^q_{\phi,\omega,g})^{p/p-q}dA(z)
	&= \int_{\D}\Big( \tau(z)^{2(1-q/p)}\, G_{q}(\nu^q_{\phi,\omega,g})\Big)^{\frac{p}{p-q}}d\lambda(z)\\& \asymp \int_{\D}\Big( \tau(z)^{2(1-q/p)}\, \int_{\D} |k_{p,z}(\xi)|^q\omega(\xi)^{q/2}\,d\nu_{\phi,\omega,g}(\xi)\Big)^{\frac{p}{p-q}}d\lambda(z)\\
	&=\int_{\D}\Big( \tau(z)^{2(1-q/p)}\, \int_{\D} |k_{p,z}(\xi)|^q\,(1+\varphi'(\xi))^q\,d\mu_{\phi,\omega,g}(\xi)\Big)^{\frac{p}{p-q}}d\lambda(z)\\
	&=\int_{\D} GB^\phi_{1,p,q}(g)(z)^{p/p-q}\,d\lambda(z).
\end{align*}
This completes the proof of (D) when $0<q<p<\infty$.

Suppose that $0<q<p=\infty$. If $GI_{(\phi,g)}: A^\infty_\omega\rightarrow A^q_\omega$ is bounded and $\{f_n\}\subset A^\infty_\omega$ is a bounded sequence converging to zero uniformly on compact subsets of $\D$, then 
\begin{equation}\label{ppqwwer}
	\|GI_{(\phi,g)}f_n\|^q_{A^q_{\om}}
	\asymp\int_{\D}\frac{|f'_n(\phi(z))|^q|g(z)|^q}{(1+\varphi'(z))^q}\,\,\omega(z)^{\frac{q}{2}}\, dA(z)=\|f'_n\|^q_{L^q(\mu_{\phi,\omega,g})} \to 0,
\end{equation}
where we used again the compactness of  the embedding $I_{\mu_{\phi,\omega,g}}$, and so $GI_{(\phi, g)}$ is compact.

It remains to prove that (a) and (c) are equivalent. By \eqref{ppqwwer} and Lemma  \ref{lem3}, we get
$GI_{(\phi,g)}: A^\infty_\omega\rightarrow A^q\omega$ is bounded if and only if $I_{\mu_{\phi,\omega,g}}: S^\infty_\omega\rightarrow L^q(\mu_{\phi,\omega,g})$ is bounded if and only if $I_{\mu_{\phi,\omega,g}}: S^\infty_\omega\rightarrow L^q(\mu_{\phi,\omega,g})$ is compact if and only if the function 
$$
	F_{\delta,\mu_{\phi,\omega,g}}(\varphi)(z):= \frac{1}{\tau(z)^{2q/p}}\int_{D_{\delta}(z)} (1+\varphi'(\xi))^q\omega(\xi)^{-q/2}\,d\mu_{\phi,\omega,g}(\xi)
$$
belongs to $ L^{1}(\D,dA).$ By Theorem \ref{thm:CMOP2}, this is equivalent to 
$$
	\int_{\D}|k_{q,z}(\xi)|^q\, \omega(\xi)^{-q/2}\,d\nu_{\phi,\omega,g}(\xi) \in  L^{1}(\D,dA),
$$
which is in turn equivalent to $GB^\phi_{1,p,q}(g)(z)\in L^{1}(\D,d\lambda),$ where $d\lambda(z)=dA(z)/\tau(z)^2,$ because of 
$$
	GB^\phi_{1,p,q}(g)(z)
	\asymp  \tau(z)^{2(1-q/p)}\int_{\D} |k_{q,z}(\xi)|^q\omega(\xi)^{q/2}\,d\nu^q_{\phi,\omega,g}(\xi).
$$

\subsection{Proof of Theorem~\ref{thmV1} (A)} {\bf Boundedness.} Let $0<p\leq q<\infty$. By \eqref{littleW},
\begin{equation}\label{eq-eqv}
	\|GV_{(\phi,g)}f\|^q_{A^q_{\om}}
	\asymp \int_{\D}\frac{|f(\phi(z))|^q|g(z)|^q}{(1+\varphi'(z))^q}\,\,\omega(z)^{\frac{q}{2}}\, dA(z)=\int_{\D}|f(z)|^q\,\omega(z)^{\frac{q}{2}}\,d\nu_{\phi,\omega,g}.
\end{equation}
Therefore, $GV_{(\phi,g)}: A^p_\omega\rightarrow A^q_\omega$ is bounded if and only if  the measure $\nu_{\phi,\omega,g}$ is a $q$-Carleson measure for $A_\omega^p$. According to Theorem \ref{thm:CMPQ}, this is equivalent to 
$$
	\sup_{z\in \D}\tau(z)^{2(1-q/p)}\int_{\D} |k_{q,z}(\xi)|^q\omega(\xi)^{q/2}\,d\nu_{\phi,\omega,g}(\xi)<\infty.
$$
Now, using $(a)$ of Lemma \ref{lem:RK-PE1}, we get 
\begin{align*}\label{maincond1}
	\tau(z)^{2(1-q/p)}\int_{\mathbb{D}} |k_{q,z}(\xi)|^q\, \omega(\xi)^{q/2} \,d\nu_{\phi,\omega,q}(\xi)
	&\asymp \int_{\mathbb{D}} |k_{p,z}(\xi)|^q\, \omega(\xi)^{q/2} \,d\nu_{\phi,\omega,q}(\xi)\\
	&= \int_{\mathbb{D}} |k_{p,z}(\phi(\xi))|^q \, \frac{ |g(z)|^q}{(1+{\varphi}^{'}(\xi))^q}\,\omega(\xi)^{q/2}dA(\xi)\\&=GB^{\phi}_{0,p,q}.
\end{align*}
Thus, $GV_{(\phi,g)}$ is bounded if and only if $GB^\phi_{0,p,q}(g)\in L^{\infty}(\D,dA)$.

{\bf Compactness.} By above, $GV_{(\phi,g)}: A^p_\omega\rightarrow A^q_\omega$ is compact if and only if the measure $\nu_{\phi,\omega,g}$ is a vanishing $q$-Carleson measure for $A_\omega^p$. This is equivalent to 
$$
	\lim_{|z|\to1^-}\tau(z)^{2(1-q/p)}\int_{\D} |k_{q,z}(\xi)|^q\omega(\xi)^{q/2}\,d\nu_{\phi,\omega,g}(\xi)=0.
$$
Now, using (a) of Lemma \ref{lem:RK-PE1}, we get 
\begin{align*}
	\tau(z)^{2(1-q/p)}\int_{\mathbb{D}} |k_{q,z}(\xi)|^q\, \omega(\xi)^{q/2} \,d\nu_{\phi,\omega,q}(\xi)
	&\asymp \int_{\mathbb{D}} |k_{p,z}(\xi)|^q\, \omega(\xi)^{q/2} \,d\nu_{\phi,\omega,q}(\xi)\\
	&=   \int_{\mathbb{D}} |k_{p,z}(\phi(\xi))|^q \,  \frac{|g(z)|^q}{(1+{\varphi}^{'}(\xi))^q}\,\omega(z)^{q/2}\,dA(\xi)\\&=GB^{\phi}_{0,p,q}.
\end{align*}
Therefore, $\lim_{|z|\to1^-}GB^\phi_{0,p,q}(g)=0$ if and only if $GV_{(\phi,g)}$ is compact.

\subsection{Proof of Theorem~\ref{thmV1} (B)} {\bf Boundedness.} Let $0<p<q=\infty$ and suppose that \eqref{eq-V2} holds. Then, by \eqref{littleW1}, we have
\begin{align*}
	\|GV_{(\phi,g)}f\|_{A^\infty_{\om}}&\asymp \sup_{z\in \D} \frac{|f(\phi(z))||g(z)|}{(1+\varphi'(z))}\,\,\omega(z)^{\frac{1}{2}}\\
	&\le \sup_{z\in \D} MV_{g,\phi,\omega}(z)\,\sup_{z\in \D}{|f(\phi(z))|\omega(\phi(z))^{\frac{1}{2}}}\,\Delta\varphi(\phi(z))^{-1/p}\\
	&\lesssim \sup_{z\in \D} MV_{g,\phi,\omega}(z)\,\sup_{z\in \D}{|f(\phi(z))|\omega(\phi(z))^{\frac{1}{2}}}\,\tau(\phi(z))^{2/p}.
\end{align*}
By \eqref{Eq-gamma} for $f\in A_{\omega}^p$, we obtain
\begin{align*}
	\|GV_{(\phi,g)}f\|_{A^\infty_{\om}}&\lesssim \sup_{z\in \D}\left(\int_{D_{\delta}(\phi(z))}{|f(\xi)|^p\omega(\xi)^{\frac{p}{2}}}\, dA(\xi)\right)^{1/p}\\
	&\le\left(\int_{\D}{|f(\xi)|^p\omega(\xi)^{\frac{p}{2}}}\, dA(\xi)\right)^{1/p}
	= \|f\|_{A^p_{\om}}.
\end{align*}
Therefore, the operator $GV_{(\phi, g)}$ is bounded.

Conversely, suppose that the  operator $GV_{ (\phi, g)}: A^p_\omega\rightarrow A^\infty_\omega$ is bounded. Taking $\xi\in\D$ such that $|\phi(\xi)|>\rho_0,$ we consider the function $f_{\phi(\xi),n,p}$ given by  $f_{\phi(\xi),n,p}:=\frac{F_{\phi(\xi),n,p}}{\tau(\phi(\xi))^{2/p}}$  where $ F_{\phi(\xi),n,p}$ is the test function defined in Lemma \ref{Borichevlemma}. These  functions $f_{\phi(\xi),n,p}$ belong to $A^p_{\om}$ with $\|f_{\phi(\xi),n,p}\|_{A^p_{\om}}\asymp 1.$ By {\eqref{littleW1}},
\begin{align*}
	\infty >\|GV_{(\phi,g)}(f_{\phi(\xi),n,p})\|_{A^\infty_{\om}}
	&\geq  \sup_{z\in \D}\frac{|f_{\phi(\xi),n,p}(z)||g(z)|}{(1+\varphi'(z))}\,\omega(z)^{\frac{1}{2}}\\
	&\geq\sup_{z\in \D}\frac{|F_{\phi(\xi),n,p}(z)||g(z)|}{\tau(\phi(\xi))^{2/p}(1+\varphi'(z))}\,\omega(z)^{\frac{1}{2}}\\
	&\geq\frac{|F_{\phi(\xi),n,p}(\phi(\xi))||g(\xi)|}{\tau(\phi(\xi))^{2/p}(1+\varphi'(\xi))}\,\omega(\xi)^{\frac{1}{2}}\,\frac{\omega(\phi(\xi))^{\frac{1}{2}}}{\omega(\phi(\xi))^{\frac{1}{2}}}.
\end{align*} 
In this case, by \eqref{BL1},
\begin{equation}\label{qM}
\begin{split}
	\infty >\|GV_{(\phi,g)}(f_{\phi(\xi),n,p})\|_{A^\infty_{\om}}
	&\geq\frac{|g(\xi)|}{(1+\varphi'(\xi))}\,\frac{\omega(\xi)^{\frac{1}{2}}}{\omega(\phi(\xi))^{\frac{1}{2}}}\,\tau(\phi(\xi))^{-2/p}\\
	&\asymp \frac{|g(\xi)|}{(1+\varphi'(\xi))}\,\frac{\omega(\xi)^{\frac{1}{2}}}{\omega(\phi(\xi))^{\frac{1}{2}}}\,\Delta\varphi(\phi(\xi))^{1/p}
	=MV_{g,\phi,\omega}(\xi).
\end{split}
\end{equation}
On the other hand, if we define $f(z)=z$ and use the boundedness of the operator $GV_{(\phi,g)}: A^p_\omega\rightarrow A^\infty_\omega,$ we obtain
\begin{equation}\label{333333}
	\|GV_{(\phi,g)}f\|_{A^\infty_{\om}} 
	\asymp\sup_{z\in \D}\frac{|g(z)|}{(1+\varphi'(z))}\,\,\omega(z)^{\frac{1}{2}}
	\lesssim \|f\|_{A^p_{\om}} <\infty.
\end{equation}
Therefore, in the case of $|\phi(\xi)|\le\rho_0, \, \xi \in \D,$ we have 
\begin{align*}
	\frac{|g(\xi)|}{(1+\varphi'(\xi))}\,\frac{\omega(\xi)^{\frac{1}{2}}}{\omega(\phi(\xi))^{\frac{1}{2}}}\,\Delta\varphi(\phi(\xi))^{1/p}
	&\asymp \frac{|g(\xi)|}{(1+\varphi'(\xi))}\,\frac{\omega(\xi)^{\frac{1}{2}}}{\omega(\phi(\xi))^{\frac{1}{2}}}\,\tau(\phi(\xi))^{-2/p}\\
	&\le C_1 \,\frac{|g(\xi)|}{(1+\varphi'(\xi))}\,\,\omega(\xi)^{\frac{1}{2}}<\infty,
\end{align*}
where\
$$
	C_1 = \sup_{|\phi(\xi)|\le \rho_0}\Big\{\omega(\phi(\xi))^{\frac{-1}{2}} \,\tau(\phi(\xi))^{-2/p}\Big\}<\infty.
$$
This, \eqref{eq-V2} holds.

{\bf Compactness.} Suppose that the operator $GV_{(\phi,g)}: A^p_\omega\rightarrow A^\infty_\omega$ is compact and {define}
$$
	f_{\phi(\xi),n,p}:=\frac{F_{\phi(\xi),n,p}}{\tau(\phi(\xi))^{2/p}}, \quad \textrm{for}\,\,\, |\phi(\xi)|>\rho_0,
$$
which are in $A^p_\omega$ and converge uniformly to zero on compact subsets of $\D$ as $|\phi(\xi)|\to 1$. Thus, 
$$
	\|GI_{(\phi,g)}(f_{\phi(\xi),n,p})\|_{A^\infty_{\om}}\rightarrow 0
$$
as $|\phi(\xi)|\to 1.$ Thus, \eqref{qM} shows that
\begin{equation*}
	\lim_{|\phi(\xi)|\to1^-} MV_{g,\phi,\om}(\xi)
	\lesssim\lim_{|\phi(\xi)|\to1^-}\|GV_{(\phi,g)}(f_{\phi(\xi),n,p})\|_{A^\infty_{\om}} = 0.
\end{equation*}

Conversely, if $\{f_n\}$ is a bounded sequence of functions in $A^p_\omega$ converging uniformly to zero on compact subsets of $\D$, then, as for $GI_{(\phi,g)}$, it follows that 
$$
	\|GV_{(\phi,g)}(f_n)\|_{A^\infty_{\om}}=\sup_{\xi\in \D}\frac{|f_n(\phi(\xi))||g(\xi)|}{1+\varphi'(\xi)}\,\,\omega(\xi)^{\frac{1}{2}}\rightarrow 0,\quad n\to \infty,
$$
which proves the compactness of the operator $GV_{(\phi,g)}: A^p_\omega\rightarrow A^\infty_\omega$.

\subsection{Proof of Theorem~\ref{thmV1} (C)} {\bf Boundedness.} Let $p=q=\infty$. Suppose first that $\eqref{eq-G3-NV}$ holds. Then, by \eqref{littleW1}, 
\begin{align*}
	\|GV_{(\phi,g)}f\|_{A^\infty_{\om}}&=\sup_{z\in \D} \frac{|f(\phi(z))||g(z)|}{(1+\varphi'(z))}\,\,\omega(z)^{\frac{1}{2}}\\
	&\le \sup_{z\in \D} NV_{g,\phi,\omega}(z)\,\sup_{z\in \D}{|f(\phi(z))|\omega(\phi(z))^{\frac{1}{2}}}\\
	&\le \sup_{z\in \D} NV_{g,\phi,\omega}(z)\,\sup_{z\in \D}{|f(z)|\omega(z)^{\frac{1}{2}}}\lesssim  \|f\|_{A^\infty_\omega},
\end{align*}
that is, $GV_{(\phi, g)}$ bounded.

Conversely, suppose that $GV_{(\phi,g)}: A^\infty_\omega\rightarrow A^\infty_\omega$ is bounded and  {show} that $\eqref{eq-G3-NV}$  holds. As before, if $\xi\in\D$ is such that $|\phi(\xi)|>\rho_0$, we use the test  functions $F_{\phi(\xi),n,p}$ to obtain
\begin{align*}
	\infty >\|GV_{(\phi,g)}(F_{\phi(\xi),n,p})\|_{A^\infty_{\om}}
	&=\sup_{z\in \D}\frac{|F_{\phi(\xi),n,p}(z)||g(z)|}{(1+\varphi'(z))}\,\omega(z)^{\frac{1}{2}}\\
	&\geq\frac{|F_{\phi(\xi),n,p}(\phi(\xi))||g(\xi)|}{(1+\varphi'(\xi))}\,\omega(\xi)^{\frac{1}{2}}\, \frac{\omega(\phi(\xi))^{\frac{1}{2}}}{\omega(\phi(\xi))^{\frac{1}{2}}}\\
	&\geq NV_{g,\phi,\omega}(\xi)\, |F_{\phi(\xi),n,p}(\phi(\xi))|\,\omega(\phi(\xi))^{\frac{1}{2}}.
\end{align*}
Now
\begin{equation}\label{eqM11}
\begin{split}
	\infty >\|GV_{(\phi,g)}(F_{\phi(\xi),n,p})\|_{A^\infty_{\om}}
	&\geq\frac{ |g(\xi)|}{(1+\varphi'(\xi))}\,\frac{\omega(\xi)^{\frac{1}{2}}}{\omega(\phi(\xi))^{\frac{1}{2}}}\\
	&\asymp \frac{ |g(\xi)|}{(1+\varphi'(\xi))}\,\frac{\omega(\xi)^{\frac{1}{2}}}{\omega(\phi(\xi))^{\frac{1}{2}}}=NV_{g,\phi,\omega}(\xi).
\end{split}
\end{equation}

If $f(z)=z$, the boundedness of the operator $GV_{(\phi,g)}: A^\infty_\omega\rightarrow A^\infty_\omega$ implies that
\begin{equation}\label{q-xt}
	\|GV_{(\phi,g)}f\|_{A^\infty_{\om}}
	=\sup_{z\in \D}\frac{|g(z)|}{(1+\varphi'(z))}\,\,\omega(z)^{\frac{1}{2}}\lesssim \|f\|_{A^\infty_{\om}} <\infty.
\end{equation}
Therefore, in the case of $|\phi(\xi)|\le\rho_0, \, \xi \in \D,$ we have 
\begin{align*}
	\frac{ |g(\xi)|}{(1+\varphi'(\xi))}\,\frac{\omega(\xi)^{\frac{1}{2}}}{\omega(\phi(\xi))^{\frac{1}{2}}}
	&\le C_2 \,\frac{|g(\xi)|}{(1+\varphi'(\xi))}\,\,\omega(\xi)^{\frac{1}{2}}<\infty,
\end{align*}
where
$$
	C_2 = \sup_{|\phi(\xi)|\le \rho_0}\Big\{\omega(\phi(\xi))^{\frac{-1}{2}} \Big\}<\infty.
$$
Combining this with \eqref{eqM11} shows that \eqref{eq-G3-NV} holds. 

{\bf Compactness.} This is similar to the proof of (C) of Theorem~\ref{thm1}.

\subsection{Proof of Theorem~\ref{thmV1} (D)} Let $0<q<p<\infty$ and suppose that $GV_{(\phi,g)}: A^p_\omega\rightarrow A^q_\omega$ is bounded. According to \eqref{eq-eqv}, the measure  $\nu_{\phi,\omega,g}$ is a $q$-Carleson measure for $A_\omega^p$. Thus, by Theorem \ref{thm:VCMQP},  $\nu_{\phi,\omega,g}$ is a vanishing $q$-Carleson measure for $A_\omega^p$. In this case, we have
$$
	\|GV_{(\phi,g)}f_n\|^q_{A^q_{\om}}\rightarrow 0, \quad n\to \infty,
$$
for any sequence $\{f_n\}\subset A^p_\omega$ converges  to zero uniformly on compact subsets of $\D.$ By Lemma $3.7$ of \cite{tija}, $GV_{(\phi,g)}$ is compact.

Next we show that (a) and (c) are equivalent. Suppose first that (c) holds. Then
\begin{equation}\label{equ111}
\begin{split}
	\int_{\D} G_{q}(v_{\phi,\omega,q})(z)^{p/(p-q)} dA(z)
	&=\int_{\D} \Big(\tau(z)^{2(1-\frac{q}{p})}G_{q}(v_{\phi,\omega,q})(z)\Big)^{p/(p-q)}d\lambda(z)\\ &\asymp \int_{\D}GB^\phi_{0,p,q}(g)^{p/(p-q)}d\lambda(z)\, {<\infty}.
\end{split}
\end{equation}
Thus, according to Theorem \ref{thm:CMOP}, $\nu_{\phi,\omega,q}$ is a $q$-Carleson measure for $A_{\omega}^p$. Then, by \eqref{littleW},
$$
	\|GV_{(\phi,g)}f_n\|^q_{A^q_{\om}}
	\asymp \int_{\D} |f(z)|^q\, {\omega(z)^{q/2}}\,d\nu_{\phi,\omega,g}(z)
	\lesssim \|f\|^q_{A_{\omega}^p},
$$
for any function $f\in A_{\omega}^p$.

Conversely, suppose the operator $GV_{ (\phi, g)}: A^p_\omega\rightarrow A^q_\omega$ is bounded. Then, for each function  $f\in A_{\omega}^p$, by \eqref{littleW},
$$
	\|GV_{ (\phi, g)}\,f\|_{A^q_{\omega}}^q
	\asymp \int_{\D} |f(z)|^q\, {\omega(z)^{q/2}}\,d\nu_{\phi,\omega,g}(z).
$$
Thus, the measure $\nu_{\phi,\omega,g}$ is a $q$-Carleson measure for $A_\omega^p$. According to Theorem \ref{thm:CMOP}, $\nu_{\phi,\omega,g}$ belongs to $L^{p/(p-q)}(\D,dA)$. Combining this with \eqref{equ111} yields that $GB^\phi_{0,p,q}(g)\in L^{p/(p-q)}(\D,d\lambda)$.

Let $0<q<p=\infty$ and suppose that $GV_{(\phi,g)}: A^\infty_\omega\rightarrow A^q_\omega$ is bounded. Then, by \eqref{littleW},
\begin{equation*}
	\|GV_{(\phi,g)}f\|^q_{A^q_{\om}}
	\asymp\int_{\D}\frac{|f(\phi(z))|^q|g(z)|^q}{(1+\varphi'(z))^q}\,\,\omega(z)^{\frac{q}{2}}\, dA(z)
	\lesssim \|f\|^q_{A_\omega^\infty},
\end{equation*}
and it follows from Theorem \ref{thm:CMOP2} that the measure $\nu_{\phi,\omega,g}$ is a $q$-Carleson measure for $A_\omega^\infty$. Thus, by Theorem \ref{thm:VCMQP},  $\nu_{\phi,\omega,g}$ is a vanishing $q$-Carleson measure for $A_\omega^\infty$. As in the previous case, this shows the compactness of the operator $GV_{(\phi,g)}$. 

It remains to prove that $(1)$ and $(3)$ are equivalent {when $p=\infty$}. Assume first that  $(3)$  holds. Then
\begin{equation}\label{eq111}
\begin{split}
	\int_{\D} G_{q}(\nu_{\phi,\omega,q})(z) \,dA(z)
	&=\int_{\D} \Big(\tau(z)^{2}G_{q}(\nu_{\phi,\omega,q})(z)\Big)\,d\lambda(z)\\ 
	&\asymp \int_{\D}GB^\phi_{0,p,q}(g)(z)\,d\lambda(z).
\end{split}
\end{equation}
Thus, according to Theorem \ref{thm:CMOP2}, $\nu_{\phi,\omega,q}$ is a $q$-Carleson measure for $A_{\omega}^\infty$. Then for any function $f\in A_{\omega}^\infty$, we have
$$
	\|GV_{(\phi,g)}f_n\|^q_{A^q_{\om}}
	\asymp \int_{\D} |f(z)|^q\, {\omega(z)^{q/2}}\,d\nu_{\phi,\omega,g}(z)
	\lesssim \|f\|^q_{A_{\omega}^\infty}.
$$

Conversely, suppose the operator $GV_{ (\phi, g)}: A^\infty_\omega\rightarrow A^q_\omega$ is bounded. Then, for any function  $f\in A_{\omega}^\infty$, we have
$$
	\|GV_{ (\phi, g)}\,f\|_{A_{\omega}}^q\asymp\int_{\D} |f(z)|^q\, {\omega(z)^{q/2}}\,d\nu_{\phi,\omega,g}(z).
$$
By assumption, this implies that the measure $\nu_{\phi,\omega,g}$ is a $q$-Carleson measure for $A_\omega^\infty$. According to Theorem \ref{thm:CMOP2}, $\nu_{\phi,\omega,g}$ belongs to $L^{1}(\D,dA)$. Combining this with \eqref{eq111} implies that $GB^\phi_{0,p,q}(g)\in L^{1}(\D,d\lambda)$.

\section{Proofs of Proposition \ref{prop1} and Corollary \ref{cor1}}\label{proofs2}

\subsection{Proof of Proposition \ref{prop1} (A)} Suppose that the  operator $GI_{(\phi,g)}: A^p_\omega\rightarrow A^q_\omega$ is bounded. Let $\xi\in\D$ be such that $|\phi(\xi)|>\rho_0$. Using the test function of Lemma \ref{Borichevlemma}, \eqref{Eq-gamma} and \eqref{littleW}, we get 
\begin{align*}
	\|F_{\phi(\xi),n,p}\|^q_{A^p_{\om}} &\gtrsim\|GI_{(\phi,g)}F_{\phi(\xi),n,p}\|^q_{A^q_{\om}}\asymp
\int_{\D}\frac{|F'_{\phi(\xi),n,p}(\phi(z))|^q}{(1+\varphi'(z))^q}\,|g(z)|^q\,\omega(z)^{\frac{q}{2}}\, dA(z)\\
	&\gtrsim \tau(\xi)^2\frac{|F'_{\phi(\xi),n,p}(\phi(\xi))|^q}{(1+\varphi'(\xi))^q}\,|g(\xi)|^q\,\omega(\xi)^{\frac{q}{2}},
\end{align*}
while Lemma \ref{BL3} implies that
\begin{equation*}
	\|F_{\phi(\xi),n,p}\|^q_{A^p_{\om}} 
	\gtrsim \tau(\xi)^2|g(\xi)|^q\,\frac{(1+\varphi'(\phi(\xi)))^q}{(1+\varphi'(\xi))^q}\,\frac{\omega(\xi)^{\frac{q}{2}}}{\omega(\phi(\xi)^{\frac{q}{2}}}.
\end{equation*}
By Lemma \ref{Borichevlemma}, we have 
\begin{equation}\label{eqp1}
	1\gtrsim|g(\xi)|\, \frac{\tau(\xi)^{2/q}}{\tau(\phi(\xi))^{2/p}}\,\frac{1+\varphi'(\phi(\xi))}{1+\varphi'(\xi)}\,\frac{\omega(\xi)^{\frac{1}{2}}}{\omega(\phi(\xi))^{\frac{1}{2}}}.
\end{equation}
When $|\phi(\xi)|\leq \rho_0$, we have 
$$
	\sup_{\phi(\xi)\le\rho_0 }|g(\xi)|\, \frac{\tau(\xi)^{2/q}}{\tau(\phi(\xi))^{2/p}}\,\frac{1+\varphi'(\phi(\xi))}{1+\varphi'(\xi)}\,\frac{\omega(\xi)^{\frac{1}{2}}}{\omega(\phi(\xi))^{\frac{1}{2}}}<\infty.
$$
Thus, \eqref{eq-nc1-GI} holds.

Suppose next that the  operator $GI_{(\phi,g)}: A^p_\omega\rightarrow A^q_\omega$ is compact. Let $\xi\in\D$ be such that $|\phi(\xi)|>\rho_0$ and  define
$$
	f_{\phi(\xi),n,p}
	=\frac{F_{\phi(\xi),n,p}}{\tau(\phi(\xi))^{2/p}},\quad \textrm{for}\,\,\, |\phi(\xi)|>\rho_0,
$$
which belongs to $A^p_\omega$ and converges uniformly to zero on compact subsets of $\D$ as $|\phi(\xi)|\to 1.$ By \eqref{Eq-gamma} and Lemma \ref{BL3}, we get 
\begin{align*}
	\|GI_{(\phi,g)}f_{\phi(\xi),n,p}\|^q_{A^q_{\om}}
	&\asymp \int_{\D}\frac{|f'_{\phi(\xi),n,p}(\phi(z))|^q}{(1+\varphi'(z))^q}\,|g(z)|^q\,\omega(z)^{\frac{q}{2}}\, dA(z)\\&\gtrsim \tau(\xi)^2\frac{|f'_{\phi(\xi),n,p}(\phi(\xi))|^q}{(1+\varphi'(\xi))^q}\,|g(\xi)|^q\,\omega(\xi)^{\frac{q}{2}}\\
	&\gtrsim|g(\xi)|^q\,\frac{\tau(\xi)^2}{\tau(\phi(\xi))^{2q/p}}\frac{(1+\varphi'(\phi(\xi)))^q}{(1+\varphi'(\xi))^q}\,\frac{\omega(\xi)^{\frac{q}{2}}}{\omega(\phi(\xi))^{\frac{q}{2}}}.
\end{align*}
Using the  compactness of the operator  $GI_{(\phi,g)} ,$ we have the desired conclusion and  the proof is complete.

\subsection{Proof of Proposition \ref{prop1} (B)} 

Suppose that $GV_{(\phi,g)}: A^p_\omega\rightarrow A^q_\omega$ is bounded.  By Theorem~\ref{thmV1} (A), this is equivalent  to $GB^\phi_{0,p,q}(g)\in L^{\infty}(\D,dA).$ By  \eqref{Eq-gamma} and \eqref{eqn:RK-Diag1}, we have 
\begin{equation}\label{EsBV}
\begin{split}
	GB^\phi_{0,p,q}(g)(\phi(z))&=\int_{\D} |k_{p,\phi(z)}(\phi(\xi))|^q\,\frac{|g(\xi)|^q}{(1+\varphi'(\xi))^{q}}\,\,\omega(\xi)^{q/2}\, dA(\xi)\\
	&\geq \int_{D_{\delta}(z)} |k_{p,\phi(z)}(\phi(\xi))|^q\,\frac{|g(\xi)|^q}{(1+\varphi'(\xi))^{q}}\,\,\omega(\xi)^{q/2}\, dA(\xi)\\
	&\gtrsim \tau(z)^{2} \, |k_{p,\phi(z)}(\phi(z))|^q\,\frac{|g(z)|^q}{(1+\varphi'(z))^q}\,\,\omega(z)^{q/2}\\
	&\gtrsim\frac{ \tau(z)^{2}}{\tau(\phi(z))^{2q/p}} \,\frac{\omega(z)^{q/2}}{\omega(\phi(z))^{q/2}} \,\frac{|g(z)|^q}{(1+\varphi'(z))^q},
\end{split}
\end{equation}
which proves that \eqref{eq-nc1-GV} holds. If $GV_{(\phi, g)}$ is compact, then it follows from Theorem~\ref{thmV1} (A) that $GB^\phi_{0,p,q}(g)(\phi(z))\to 0$ as $|z|\to 1$, which completes the proof.

\subsection{Proof of Corollary \ref{cor1}} (A) Let $p<q$ and suppose that $GI_{(id,g)}$ is bounded. By  \eqref{eqn:RK-Diag1} and Lemma \ref{lem:subHarmP}, we have
\begin{align*}
	|g(z)|^q &\asymp \tau(z)^{2q/p}\,  |g(z)|^q\,|k_{p,z}(z)|^q\,\omega(z)^{\frac{q}{2}} \\
	&\lesssim \frac{\tau(z)^{2q/p}}{\tau(z)^2}\int_{D_{\delta}(z)} |g(s)|^q\,|k_{p,z}(s)|^q\,\omega(s)^{\frac{q}{2}} dA(s)\lesssim  \frac{\tau(z)^{2q/p}}{\tau(z)^2}\, GB^{id}_{1,p,q}(g)(z).
\end{align*}
Then, using the boundedness of $GI_{(id,g)},$ we obtain 
\begin{equation*}
	\sup_{z\in \D}|g(z)|^q \,\tau(z)^{2(1-q/p)}
	\lesssim \sup_{z\in \D} GB^{id}_{1,p,q}(g)(z)< \infty.
\end{equation*}
Since $\tau(z)^{2(1-q/p)}\rightarrow \infty,$ as $|z|\rightarrow1$, the function $g$ must be zero.

\subsection{Proof of Corollary \ref{cor1}} (B) Let $q<p$. {Using} 
\begin{equation}\label{e-eqv}
	\|GI_{(\phi,g)}f\|^q_{A^q_{\om}}
	\asymp\int_{\D}\frac{|f'(\phi(z))|^q|g(z)|^q}{(1+\varphi'(z))^q}\,\,\omega(z)^{\frac{q}{2}}\, dA(z)
	=\|f'\|^q_{L^q(\mu_{\phi,\omega,g})}
\end{equation}
(see \eqref{littleW}) and Lemma \ref{lem2}, we get
$GI_{(\phi,g)}: A^p_\omega\rightarrow A^q_\omega$ is bounded if and only if $I_{\mu_{\phi,\omega,g}}: S^p_\omega\rightarrow L^q(\mu_{\phi,\omega,g})$ is bounded if and only if $I_{\mu_{\phi,\omega,g}}: S^p_\omega\rightarrow L^q(\mu_{\phi,\omega,g})$ is compact if and only if the function 
\begin{equation}\label{FC}
	F_{\delta,\mu_{\phi,\omega,g}}(\varphi)(z):= \frac{1}{\tau(z)^{2}}\int_{D_{\delta}(z)} (1+\varphi'(\xi))^q\omega(\xi)^{-q/2}\,d\mu_{\phi,\omega,g}(\xi) 
\end{equation}
belongs to $ L^{p/(p-q)}(\D,dA).$ Since $\phi=id,$  we have
$$
	d\mu_{\phi,\omega,g}(z)=\frac {|g(z)|^q}{(1+\varphi'(z))^q}\,\,\omega(z)^{q/2}\, dA(z)
$$
and invoking this in the condition \eqref{FC}, it  becomes exactly
$$
	\frac{1}{\tau(z)^{2}}\int_{D_{\delta}(z)}|g(\xi)|^q \,dA(\xi)
	\in L^{p/(p-q)}(\D,dA).
$$
Applying Lemma \ref{lem:subHarmP}, we get that $g\in L^{r}(\D,dA),$ with $r=pq/(p-q).$

Conversely, suppose that $g\in L^{r}(\D,dA)$.  By H\"older's inequality and \eqref{littleW}, we obtain
\begin{equation}\label{eq-eqvn}
\begin{split}
	\|GI_{(id,g)}f\|^q_{A^q_{\om}}&\asymp\int_{\D}\frac{|f'(z)|^q|g(z)|^q}{(1+\varphi'(z))^q}\,\,\omega(z)^{\frac{q}{2}}\, dA(z)\\
	&\lesssim\left(\int_{\D}\frac{|f'(z)|^p\omega(z)^{\frac{p}{2}}}{(1+\varphi'(z))^p}\, dA(z)\right)^{\frac{q}{p}}\left(\int_{\D}|g(z)|^r\, dA(z)\right)^{\frac{q}{r}}\\
	&\asymp\|f\|^q_{A^p_{\om}}\|g\|^q_{L^r(\D, dA)}\lesssim\|f\|^q_{A^p_{\om}},
\end{split}
\end{equation}
which proves boundedness and completes the proof.

\subsection{Proof of Corollary \ref{cor1} (C)}  Let  $ 0<p\le q\le \infty$. We characterize boundedness using Theorem \ref{thmV1}. Suppose that $GB^{id}_{0,p,q}(g')\in L^{\infty}(\D,dA)$. It follows from \eqref{EsBV} (changing $g$ by $g'$ and $\phi=id$),
\begin{equation}\label{EsBVn}
\begin{split}
GB^{id}_{0,p,q}(g')(z)&=\int_{\D} |k_{p,z}(\xi)|^q\,\frac{|g'(\xi)|^q}{(1+\varphi'(\xi))^{q}}\,\,\omega(\xi)^{q/2}\, dA(\xi)\\
&\gtrsim\frac{ \tau(z)^{2}}{\tau(z)^{2q/p}} \,\frac{|g'(z)|^q}{(1+\varphi'(z))^q}\asymp\left(\frac{|g'(z)|}{(1+\varphi'(z))}\, \Delta\varphi(z)^{\frac{1}{p}-\frac{1}{q}}\right)^{q}.
\end{split}
\end{equation}
Thus, $$\frac{|g'(z)|}{(1+\varphi'(z))}\Delta\varphi(z)^{\frac{1}{p}-\frac{1}{q}}\in L^{\infty}(\D,dA).$$

Conversely, suppose that $$T(g,\varphi)(z):=\frac{|g'(z)|}{(1+\varphi'(z))}\, \Delta\varphi(z)^{\frac{1}{p}-\frac{1}{q}}\in L^{\infty}(\D,dA).$$ By \eqref{eqn:Eq-NE1}, we have 
\begin{align*}
	GB^{id}_{0,p,q}(g')(z)&=\int_{\D} |k_{p,z}(\xi)|^q\,\frac{|g'(\xi)|^q}{(1+\varphi'(\xi))^{q}}\,\,\omega(\xi)^{q/2}\, dA(\xi)\\
	&\lesssim \left( \tau(z)^{2(1-q/p)} \int_{\D} |k_{q,z}(\xi)|^q\,\Delta\varphi(z)^{1-\frac{q}{p}}\,\omega(\xi)^{q/2}\, dA(\xi)\right)\sup_{z\in \D}\,(T(g,\varphi)(z))^q.
\end{align*}
Since $\Delta\varphi(z)\asymp \tau(z)^{-2}$,
\begin{equation}\label{EsBV2}
\begin{split}
	GB^{id}_{0,p,q}(g')(z)
	&\lesssim \left( \int_{\D} |k_{q,z}(\xi)|^q\,\omega(\xi)^{q/2}\, dA(\xi)\right)\sup_{z\in \D}\,(T(g,\varphi)(z))^q\\
	&=\|k_{q,z}\|^q_{A^q_{\om}}\sup_{z\in \D}\,(T(g,\varphi)(z))^q=\sup_{z\in \D}\,(T(g,\varphi)(z))^q.
\end{split}
\end{equation}
This finishes the proof of boundedness.

The characterization for compactness follows from Theorem \ref{thmV1}, \eqref{EsBV2}, and \eqref{EsBVn}.

\subsection{Proof of Corollary \ref{cor1}} (D) Let $0<q<p<\infty$. We first suppose that $V_g: A^p_\omega\to A^q_\omega$ is bounded, that is, $GB^{id}_{0,p,q}(g') \in L^{\frac{p}{p-q}}(\D,d\lambda)$ (see Theorem~\ref{thmV1}). Then, by \eqref{Eq-gamma}, we have
\begin{align*}
	GB^{id}_{0,p,q}(g')(z)
	&=\int_{\D} |k_{p,z}(\xi)|^q\,\frac{|g'(\xi)|^q}{(1+\varphi'(\xi))^{q}}\,\,\omega(\xi)^{q/2}\, dA(\xi)\\
	&\gtrsim \tau(z)^{2} \, |k_{p,z}(z)|^q\,\frac{|g'(z)|^q}{(1+\varphi'(z))^{q}}\,\omega(z)^{q/2}.
\end{align*}
By  Lemma \ref{lem:RK-PE1}, we obtain
\begin{equation*}
	GB^{id}_{0,p,q}(g')(z)
	\gtrsim\frac{ \tau(z)^{2}}{\tau(z)^{2q/p}} \,\frac{|g'(z)|^q}{(1+\varphi'(z))^{q}}
	\asymp\left(\frac{|g'(z)|}{(1+\varphi'(z))}\, \Delta\varphi(z)^{\frac{1}{p}-\frac{1}{q}}\right)^{q}.
\end{equation*}
In this case, we extract that 
\begin{equation*}
	\tau(z)^{2(\frac{q}{p}-1)}GB^{id}_{0,p,q}(g')(z)\gtrsim\left(\frac{|g'(z)|}{(1+\varphi'(z))}\right)^{q}.
\end{equation*}
By our assumption and the fact that $\tau(z)^{2q/p}$ is bounded, it follows that \eqref{eq-V9} holds.

Conversely, put $r=\frac{pq}{p-q}.$ By H\"older's inequality, we obtain
\begin{align*}
	&GB^{id}_{0,p,q}(g')(z)^{p/(p-q)}\\
	&=\left(\int_{\D} |k_{p,z}(\xi)|^q\,\frac{|g'(\xi)|^q}{(1+\varphi'(\xi))^{q}}\,\,\omega(\xi)^{q/2}\, dA(\xi)\right)^{p/(p-q)}\\
	&\le\|K_z\|^{-r}_{A^p_{\om}} \left(\int_{\D} |K_{z}(\xi)|^{\frac{r}{2}} \left(\frac{|g'(\xi)|}{1+\varphi'(\xi)}\right)^{r} \omega(\xi)^{\frac{r}{4}} \, dA(\xi)\right)
\cdot\left(\int_{\D} |K_{z}(\xi)|^{\frac{p}{2}}  \omega(\xi)^{\frac{p}{4}}\, dA(\xi)\right)^{\frac{q}{(p-q)}}\\
	&=\frac{\|K_z\|^{r/2}_{A^{p/2}_{\om}}}{\|K_z\|^{r}_{A^p_{\om}}} \int_{\D} |K_{z}(\xi)|^{\frac{r}{2}} \left(\frac{|g'(\xi)|}{1+\varphi'(\xi)}\right)^{r} \omega(\xi)^{\frac{r}{4}} \, dA(\xi).
\end{align*}
By Theorem \ref{RK-PE}, $\frac{\|K_z\|^{r/2}_{A^{p/2}_{\om}}}{\|K_z\|^{r}_{A^p_{\om}}}\asymp \omega(z)^{\frac{r}{4}}\,\tau(z)^{r}$, and Fubini's theorem implies that
\begin{multline*}\label{EsBV4}
	\int_{\D} GB^{id}_{0,p,q}(g')(z)^{p/(p-q)}\frac{dA(z)}{\tau(z)^{2}}\\
	\lesssim\int_{\D}\left(\frac{|g'(\xi)|}{1+\varphi'(\xi)}\right)^{r} \omega(\xi)^{\frac{r}{4}}  \left( \int_{\D} |K_{\xi}(z)|^{\frac{r}{2}} \, \omega(z)^{\frac{r}{4}}\,\tau(z)^{r-2} \,dA(z) \right)dA(\xi).
\end{multline*}
Since
$$
	\omega(\xi)^{\frac{r}{4}}  \left( \int_{\D} |K_{\xi}(z)|^{\frac{r}{2}} \, \omega(z)^{\frac{r}{4}}\,\tau(z)^{r-2} \,dA(z) \right)\lesssim 1
$$
(see Lemma \ref{nEstim}), the proof is complete.

\subsection{Proof of Corollary \ref{cor2}} (I) Let $0<p=q<\infty$. By (c) of Lemma 32 in \cite{CoPe},
\begin{equation}\label{eq-V10}
	\psi_\om(r) \asymp (1+\varphi'(r))^{-1} \quad \textrm{for} \,\, r \in [0,1).
\end{equation}
Therefore,
\begin{equation}\label{CJ}
\begin{split}
	GB^{id}_{0,p,p}(g')(z)&=\int_{\D} |k_{p,z}(\xi)|^p\,\frac{|g'(\xi)|^p}{(1+\varphi'(\xi))^{p}}\,\,\omega(\xi)^{p/2}\, dA(\xi)\\
	&\asymp \sup_{\xi\in \D}\left(\psi_\om(\xi)|g'(\xi)|\right)^p \left(  \int_{\D} |k_{p,z}(\xi)|^p\,\omega(\xi)^{p/2}\, dA(\xi)\right)\\
	&=\sup_{\xi\in \D}\left(\psi_\om(\xi)|g'(\xi)|\right)^p \|k_{p,z}\|^p_{A^p_\om}=\sup_{\xi\in \D}\left(\psi_\om(\xi)|g'(\xi)|\right)^p.
\end{split}
\end{equation}
The other assertion follows easily from \eqref{CJ}. 

\subsection{Proof of Corollary \ref{cor2}} (II) Let $0<p<q<\infty$. Note that the  weighted Bergman space $A^p(\omega),$ defined in \cite{PP1}, is  the same as  the  Bergman spaces $A^p_W, $ with $W=\omega^{2/p}$. Moreover, 
$$
	GB^{id}_{0,p,q}(g')(z) 
	= \int_{D_{\delta}(z)}|k_{p,z}(\xi)|^q\frac{|g'(\xi)|^q}{(1+\varphi'(\xi))^{q}}\,\omega(\xi) \,dA(\xi),
$$
and \eqref{eqn:RK-Diag1} is transformed to 
\begin{equation}\label{eqn:RK-Diag1n}
	|k_ {p,z}(\zeta)|^q \, \omega(\zeta)^{q/p} \asymp \tau(z)^{-2q/p} ,\qquad \zeta \in D_\delta(z),
\end{equation}
where $k_{p,z}(\xi)=K_z(\xi)/\|k_{p,z}\|_{A^p(\om)}$. 

Let $s=\frac{2}{p}-\frac{2}{q}$. Then, by \eqref{eq-V10} and successively \eqref{Eq-gamma}, \eqref{eqn:asymptau} and\eqref{eqn:RK-Diag1n}, we get
\begin{align*}
	\left(\|K_z\|^{2s}_{A^2(\om)}\psi_\om(z)|g'(z)|\right)^q&\lesssim\frac{\|K_z\|^{2qs}_{A^2(\om)}}{\tau(z)^{2}\omega(z)^{1-\frac{q}{p}}}\int_{D_{\delta}(z)}\frac{|g'(\xi)|^q}{(1+\varphi'(\xi))^{q}}\,\omega(\xi)^{1-\frac{q}{p}} \,dA(\xi)\\
	&\lesssim\frac{1}{\tau(z)^{2q/p}}\int_{D_{\delta}(z)}\frac{|g'(\xi)|^q}{(1+\varphi'(\xi))^{q}}\,\omega(\xi)^{1-\frac{q}{p}} \,dA(\xi)\\
	&\lesssim\int_{D_{\delta}(z)}|k_{p,z}(\xi)|^q\frac{|g'(\xi)|^q}{(1+\varphi'(\xi))^{q}}\,\omega(\xi)\, dA(\xi)\\
	&\lesssim GB^{id}_{0,p,q}(g')(z)< \infty.
\end{align*}
Thus, to prove that the function $g'$ vanishes on $\D$, it is enough to show that $\|K_z\|^{2s}_{A^2(\om)}\psi_\om(|z|)$ goes to infinity as $|z|\to 1$. Indeed, by \eqref{Eq-NE} and \eqref{jcn}, we have 
$$
\|K_z\|^{2s}_{A^2(\om)}\psi_\om(|z|)\asymp \frac{\tau(z)^{2(1-s)}}{(1-|z|)^{t}\omega(z)^{s}},
$$
and so,
$$
	\lim_{\substack{|z|\to 1}} \|K_z\|^{2s}_{A^2(\om)}\psi_\om(z)=\infty
$$
because of Lemma 2.3 in \cite{PP1}.

\subsection{Proof of Corollary \ref{cor2}} (III) Let $q < p$, and suppose that $GB^{id}_{0,p,q}(g')\in L^{p/(p-q)}(\D,d\lambda)$. Then
\begin{equation*}
\begin{split}
GB^{id}_{0,p,q}(g')(z)&\gtrsim\int_{D_{\delta}(z)}|k_{p,z}(\xi)|^q\frac{|g'(\xi)|^q}{(1+\varphi'(\xi))^{q}}\,\omega(\xi) \,dA(\xi)\\
&\gtrsim\tau(z)^{-2q/p}\int_{D_{\delta}(z)}\frac{|g'(\xi)|^q}{(1+\varphi'(\xi))^{q}}\,\omega(\xi)^{\frac{p-q}{p}} \,dA(\xi),
\end{split}
\end{equation*}
and so it follows from the assumption that
\begin{equation}\label{ef}
\begin{split}
&\int_{\D}\left(\tau(z)^{-2}\int_{D_{\delta}(z)}\frac{|g'(\xi)|^q}{(1+\varphi'(\xi))^{q}}\,\omega(\xi)^{\frac{p-q}{p}} \,dA(\xi)\right)^{\frac{p}{p-q}} dA(z)\\
&\lesssim \int_{\D}\left(GB^{id}_{0,p,q}(g')(z)\right)^{\frac{p}{p-q}}d\lambda(z)<+\infty.
\end{split}
\end{equation}
Thus, using \eqref{littleW}, we get
$$
\|g\|_{A^{pq/(p-q)}(\om)}\lesssim \int_{\D}\left(\tau(z)^{-2}\int_{D_{\delta}(z)}\frac{|g'(\xi)|^q}{(1+\varphi'(\xi))^{q}}\,\omega(\xi)^{\frac{p-q}{p}} \,dA(\xi)\right)^{\frac{p}{p-q}} dA(z).
$$
This completes the proof of Corollary \ref{cor2}.

\end{document}